\newtheorem{theorem}{Theorem}[section]
\newtheorem{lemma}[theorem]{Lemma}
\newtheorem{corollary}[theorem]{Corollary}
\newtheorem{proposition}[theorem]{Proposition}
\theoremstyle{definition}
\newtheorem{definition}[theorem]{Definition}
\theoremstyle{remark}
\newtheorem {question}[theorem]{Question}
\numberwithin{equation}{section}
\newtheoremstyle{noparens}%
  {}{}%
{}{}%
{\bfseries}{.}%
{ }%
{\thmname{#1}\thmnumber{ #2}\thmnote{ #3}}
\theoremstyle{noparens}
\newtheorem*{question*}{Question}
\newtheorem*{theorem*}{Theorem}
\title{The reverse mathematics of the thin set and Erd\H{o}s-Moser theorems}
\author{Lu Liu}
\address{School of Mathematics and Statistics, HNP-LAMA\\
Central South University\\
ChangSha 410083\\
People’s Republic of China}
\email{g.jiayi.liu@gmail.com}
\author{Ludovic Patey}
\address{CNRS, Institut Camille Jordan\\
Universit\'{e} Claude Bernard Lyon 1\\
43 boulevard du 11 novembre 1918, F-69622 Villeurbanne Cedex, France}
\email{ludovic.patey@computability.fr}
\subjclass[2010]{03D80 03F35 03B30}
\thanks{}
\def\t{\tilde}
\def\propertyL{2-hyperimmunity}
\def\hyper{2-hyperimmune}
\def\msf{\mathsf}
\def\mcal{\mathcal}
\def\v{\vec}
\newcommand{\imp}{\rightarrow}
\newcommand{\Nb}{\mathbb{N}}
\newcommand{\Psf}{\mathsf{P}}
\newcommand{\Qsf}{\mathsf{Q}}
\newcommand{\Bcal}{\mathcal{B}}
\newcommand{\Fcal}{\mathcal{F}}
\newcommand{\Hcal}{\mathcal{H}}
\newcommand{\Ical}{\mathcal{I}}
\newcommand{\Mcal}{\mathcal{M}}
\newcommand{\Pcal}{\mathcal{P}}
\newcommand{\Rcal}{\mathcal{R}}
\renewcommand{\setminus}{\smallsetminus}
\newcommand{\p}[1]{\left( #1 \right)}
\newcommand{\card}[1]{\left| #1 \right|}
\newcommand{\cond}[1]{\left\{\begin{array}{ll} #1 \end{array}\right.}
\newcommand{\s}[1]{\ensuremath{\sf{#1}}}
\DeclareMathOperator{\rca}{\s{RCA}_0}
\DeclareMathOperator{\aca}{\s{ACA}}
\DeclareMathOperator{\wkl}{\s{WKL}}
\DeclareMathOperator{\dnr}{\s{DNR}}
\DeclareMathOperator{\rt}{\s{RT}}
\DeclareMathOperator{\ads}{\s{ADS}}
\DeclareMathOperator{\sads}{\s{SADS}}
\DeclareMathOperator{\coh}{\s{COH}}
\DeclareMathOperator{\ts}{\s{TS}}
\DeclareMathOperator{\sts}{\s{STS}}
\DeclareMathOperator{\fs}{\s{FS}}
\DeclareMathOperator{\emo}{\s{EM}}
\DeclareMathOperator{\semo}{\s{SEM}}
\def\iatfs{array}
\def\r{\rightarrow}
\def\iatfshyp{4-hyperimmune}
\def\propiatfshyp{4-hyperimmunity}
\def\compatible{compatible}
\def\diagonalagainst{diagonalizes against}
\def\h{\hat}
\def\fssufficient{$\fs$-sufficient}
\def\tssufficient{$\ts$-sufficient}
\begin{document}

\begin{abstract}
The thin set theorem for $n$-tuples and $k$ colors ($\ts^n_k$)
states that every $k$-coloring of $[\Nb]^n$ admits an infinite set of integers $H$
such that $[H]^n$ avoids at least one color.
In this paper, we study the combinatorial weakness of the thin set theorem
in reverse mathematics by proving neither $\ts^n_k$, nor the free set theorem ($\fs^n$) imply
the Erd\H{o}s-Moser theorem ($\emo$) whenever $k$ is sufficiently large (answering a question of Patey and giving a partial result towards a question of Cholak Giusto, Hirst and Jockusch).
Given a problem $\msf{P}$, a computable instance of $\msf{P}$
is universal iff its solution computes a solution of any other computable
$\msf{P}$-instance. It has been established that most of Ramsey-type problems
do not have a universal instance, but the case of Erd\H{o}s-Moser theorem
remained open so far.
We prove that Erd\H{o}s-Moser theorem does not admit a universal instance (answering a question of Patey).

\end{abstract}

\maketitle

\section{Introduction}

In this paper, we study the computability-theoretic strength of Ramsey's theorem
when we weaken the notion of homogeneous set by allowing more colors. The resulting weakenings are known as thin set theorems. In particular,
we show that some thin set theorems are sufficiently weak not to imply the Erd\H{o}s-Moser theorem in reverse mathematics. 

Reverse mathematics is a foundational program which seeks to determine the optimal axioms
to prove ``ordinary'' theorems~\cite{Friedman1974Some}. It uses the framework of subsystems of second-order arithmetic,
with a base theory called $\rca$, informally capturing ``computable mathematics''.
When the first-order part consists of the standard integers, the models of $\rca$
are fully specified by their second-order parts, which are precisely the \emph{Turing ideals}.
A Turing ideal $\Ical$ is a collection of sets which is closed downward under the Turing
reduction $(\forall X \in \Ical)(\forall Y \leq_T X) Y \in \Ical$ and closed under the
effective join $(\forall X, Y \in \Ical) X \oplus Y \in \Ical$. We shall only consider
such models, called $\omega$-models.

The early study of reverse mathematics has seen the emergence
of 4 subsystems of second-order arithmetic linearly ordered by the provability relation,
such that most of the ordinary theorems are either provable in $\rca$, or equivalent in $\rca$ to one of them.
These subsystems, together with $\rca$, form the ``Big Five''~\cite{Montalban2011Open}.
Among them, let us mention $\aca$, standing for arithmetical comprehension axiom,
and $\wkl$, standing for weak K\"onig's lemma, which states that every infinite binary
tree has an infinite path. See Simpson~\cite{Simpson2009Subsystems}
for a good introduction to reverse mathematics. Among the theorems studied in reverse mathematics,
Ramsey's theorem plays an important role, since Ramsey's theorem for pairs is historically the first example
of statement which does not satisfy this empirical observation.

\begin{definition}[Ramsey's theorem]
A subset~$H$ of~$\omega$ is~\emph{homogeneous} for a coloring~$f : [\omega]^n \to k$ (or \emph{$f$-homogeneous})
if each $n$-tuple over~$H$ is given the same color by~$f$.
$\rt^n_k$ is the statement ``Every coloring $f : [\omega]^n \to k$ has an infinite $f$-homogeneous set''.
\end{definition}

$\rt^1_k$ is nothing but the infinite pigeonhole principle which is provable in $\rca$.
Jockusch~\cite{Jockusch1972Ramseys} (see Simpson~\cite{Simpson2009Subsystems})
proved that $\rt^n_k$ is equivalent to $\aca$ over $\rca$ whenever~$n \geq 3$,
and that $\wkl$ does not imply $\rt^2_2$ over $\rca$.
The computability-theoretic strength of Ramsey's theorem for pairs was unknown for a long time,
until Seetapun~\cite{Seetapun1995strength} proved that $\rt^2_2$ does not imply $\aca$ over~$\rca$,
and that the first author~\cite{Liu2012RT22} proved that $\rt^2_2$ does not imply $\wkl$ over~$\rca$, thereby
showing that $\rt^2_2$ is not even linearly ordered with the Big Five.

This analysis of Ramsey's theorem naturally started new research axis,
among which the search for weakenings of Ramsey's theorem for arbitrary $n$-tuples
which would not imply $\aca$. Ramsey's theorem can be seen as a problem,
whose \emph{instances} are $k$-colorings of $[\omega]^n$, and whose \emph{solutions} are infinite homogeneous sets.
This problem has two explicit parameters, namely, the size $n$ of the $n$-tuples, and the number $k$ of colors of the coloring.
There is one implicit parameter which is the number of colors~$\ell$ allowed in the solution.
In the case of a homogeneous set, $\ell = 1$. In this paper, we give a partial answer to the following question.

\begin{quote}
$\bullet$\ How does the number of colors allowed in a solution impact
the computability-theoretic strength of Ramsey's theorem?
\end{quote}

We are in particular interested in the case where $\ell = k-1$. This yields the notion of thin set.

\begin{definition}[Thin set theorem]
Given a coloring~$f : [\omega]^n \to k$ (resp.\ $f : [\omega]^n \to \omega$), a set~$H$
is \emph{thin} for~$f$ (or \emph{$f$-thin}) if~$|f([H]^n)| \leq k-1$ (resp.\ $f([H]^n) \neq \omega$).
For every~$n \geq 1$ and~$k \geq 2$, $\ts^n_k$ is the statement
``Every coloring $f : [\omega]^n \to k$ has an infinite $f$-thin set''
and $\ts^n_\omega$ is the statement ``Every coloring $f : [\omega]^n \to \omega$ has an infinite $f$-thin set''.
\end{definition}

In particular, $\ts^n_2$ is Ramsey's theorem for $n$-tuples and 2 colors.
The thin set theorem $\ts^n_\omega$ was introduced in reverse mathematics
by Friedman~\cite{FriedmanFom53free} and studied by Cholak, Giusto,
Hirst and Jockusch~\cite{Cholak2001Free}. Wang~\cite{Wang2014Some} proved the surprising
result that for every $n \geq 1$ and every sufficiently large $k$ (with an explicit upper bound on $k$),
$\ts^n_k$ does not imply $\aca$, thereby showing that allowing more colors in the solutions
yields a strict weakening of Ramsey's theorem, which is already reflected in reverse mathematics.
The second author refined Wang's analysis by proving that
for every $n,m,\ell\in\omega$ with $m>1$ and every sufficiently large $k$,
$\ts^n_k$ implies neither $\wkl$~\cite{PateyCombinatorial},
nor $\ts^m_\ell$~\cite{Patey2016weakness}. In particular,
the statement $(\forall n)(\exists k)\ts^n_k$ does not imply $\rt^2_2$ over $\rca$. In this paper,
we partially answer the following sub-question.

\begin{quote}
$\bullet$\ What consequences of Ramsey's theorem for pairs are already consequences of the various thin set theorems
in reverse mathematics?
\end{quote}

The thin set theorem for pairs with $\omega$ colors ($\ts^2_\omega$) seems combinatorially very weak,
however, it has a diagonalization power similar to Ramsey's theorem for pairs. For example,
there is a computable instance of $\ts^2_\omega$ with no $\Sigma^0_2$ solution~\cite{Cholak2001Free},
and given any low${}_2$ set $X$, there is a computable instance of $\ts^2_\omega$ with no $X$-computable solution~\cite{patey2015degrees}.
One can strengthen the thin set theorem by asking, given a coloring $f : [\omega]^n \to \omega$,
for an infinite set $H$ such that $H \setminus \{x\}$ is $f$-thin for every $x \in H$.
This yields the notion of free set.

\begin{definition}[Free set theorem]
Given a coloring $f : [\omega]^n \to \omega$, a set~$H$
is \emph{free} for~$f$ (or \emph{$f$-free}) if for every~$\sigma \in [H]^n$,
$f(\sigma) \in H \imp f(\sigma) \in \sigma$.
For every~$n \geq 1$, $\fs^n$ is the statement
``Every coloring $f : [\omega]^n \to \omega$ has an infinite $f$-free set''.
\end{definition}

The free set theorem is usually studied together with the thin set theorem,
as they are combinatorially very close. The standard proof of $\fs^n$ involves
the statement $(\exists k)\ts^n_k$ in a way that propagates most of the computability-theoretic properties of
the thin set theorem to the free set theorem. This is why any known proof that $(\exists k)\ts^n_k$
does not imply another statement $\Psf$ over $\rca$ empirically yields a proof that $\fs^n$
does not imply~$\Psf$~\cite{Cholak2001Free,Wang2014Some,Patey2016weakness,PateyCombinatorial}.
This will again be the case in our paper.

\subsection{Main results}
Ramsey's theorem for pairs
admits various decompositions into conjunctions of strictly weaker
 statements. Among
them, the decomposition into the Erd\H{o}s-Moser theorem and the Ascending Descending sequence principle
is particularly interesting for various technical reasons.

\begin{definition}[Erd\H{o}s-Moser theorem]
A tournament $T$ is an irreflexive binary relation such that for all $x,y \in \omega$ with $x \not= y$, exactly one of $T(x,y)$ or $T(y,x)$ holds. A set~$H$ is \emph{$T$-transitive} if the relation $T$ over $H$ is transitive in the usual sense.
$\emo$ is the statement ``Every infinite tournament $T$ has an infinite transitive subtournament.''
\end{definition}

\begin{definition}[Ascending descending sequence]
Given a linear order~$(L, <_L)$, an \emph{ascending} (\emph{descending}) sequence
is a set~$S$ such that for every~$x <_\Nb y \in S$, $x <_L y$ ($x >_L y$).
$\ads$ is the statement ``Every infinite linear order admits an infinite ascending or descending sequence''.
\end{definition}

Bovykin and Weiermann~\cite{Bovykin2005strength} proved Ramsey's theorem for pairs as follows:
Given a coloring~$f : [\Nb]^2 \to 2$, we can see~$f$ as a tournament~$T$
such that whenever~$x <_\Nb y$, $T(x,y)$ holds if and only if~$f(x,y) = 1$.
Any $T$-transitive set $H$ can be seen as a linear order $(H, \prec)$
such that for every~$x <_\Nb y$: ~$x \prec y$ if and only if~$f(x,y) = 1$.
Any infinite ascending or descending sequence is $f$-homogeneous.
It is therefore natural to study the ascending descending sequence principle
together with the Erd\H{o}s-Moser theorem.
Lerman, Solomon and Towsner~\cite{Lerman2013Separating} proved that $\emo$ does not imply $\ads$ over~$\rca$,
while Hirschfeldt and Shore~\cite{Hirschfeldt2007Combinatorial} proved that $\ads$ does not imply $\emo$.
The second author asked~\cite{Patey2016weakness} whether any of $\fs^2$, $\ts^2_\omega$, or $\ts^2_3$
implies $\emo$ over~$\rca$. We answer this question negatively, even for
stable restrictions of the Erd\H{o}s-Moser   theorem.

\begin{theorem}\label{thm:many-and-ts24-not-sem-and-many}
Over $\rca$, $\wkl + \coh + \ts^2_4 + (\forall n)(\exists k)\ts^n_k + (\forall n)\fs^n$
implies none of $\semo$, $\sts^2_3$ and $\sads$.
\end{theorem}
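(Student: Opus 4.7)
The plan is to build an $\omega$-model of $\rca + \wkl + \coh + \ts^2_4 + (\forall n)(\exists k)\ts^n_k + (\forall n)\fs^n$ (or three separate such models) refuting each of $\semo$, $\sts^2_3$, and $\sads$. The central device is a preservation property $\Pcal$ with two features: every principle on the left preserves $\Pcal$, while each of $\semo$, $\sts^2_3$, $\sads$ admits a computable instance whose every solution destroys $\Pcal$. Guided by the 2-hyperimmunity and 4-hyperimmunity abbreviations introduced in the preamble, I expect $\Pcal$ to be \emph{preservation of $k$ hyperimmunities} for suitable $k$: for every $Z$ and every $k$-tuple of sets hyperimmune relative to $Z$, the tuple remains jointly hyperimmune relative to some solution of any $Z$-computable instance.

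For the negations, I would first fix, for each target, a computable instance together with hyperimmune sets $A_0,\dots,A_{k-1}$ such that every solution computes a function dominating the principal function of one of these. For $\sads$, a stable linear order coded from a pair of hyperimmune sets so that either side of an ascending or descending sequence enumerates one of them. For $\semo$, a stable tournament engineered so that transitivity forces a solution to enumerate a dominant for one of the chosen hyperimmune sets. For $\sts^2_3$, a stable $3$-coloring in which any two-color thin set recovers one of four carefully chosen hyperimmune sets---the jump from two to four reflects the extra color allowed in the solution.

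Next, I would prove preservation theorems on the left. For $\wkl$, a standard low-basis argument preserves 2-hyperimmunity. For $\coh$, an adaptation of the Cholak--Jockusch--Slaman cohesive forcing keeps the cohesive set from dominating a fixed hyperimmune set's principal function. For $\ts^n_k$ with $k$ sufficiently large in terms of $n$, and likewise for $\fs^n$, I would use a Mathias-style forcing whose reservoirs are thinned so as to avoid domination of the fixed hyperimmune sets while preserving a Wang-style largeness condition that guarantees infinitely many candidates for a thin or free set.

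The main obstacle is the preservation theorem for $\ts^n_k$ uniformly in $n$: one must quantify $k = k(n)$ large enough so that a single Mathias-type extension can simultaneously produce a solution and avoid domination on the $2$ (or $4$) fixed hyperimmune sets. This is the combinatorial core and will extend the large-set techniques of~\cite{Wang2014Some,Patey2016weakness,PateyCombinatorial}. Once these preservation theorems are in place, a standard iterated $\omega$-model construction finishes the proof: the ideal is closed under solutions of all left-side problems while keeping the fixed hyperimmune sets hyperimmune, so the bad right-side instance, included at stage~$0$, has no solution in the model, and the corresponding principle fails there.
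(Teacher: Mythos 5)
Your high-level framework is correct: identify a preservation property $\Pcal$ that $\wkl$, $\coh$, $\ts^2_4$, $(\forall n)(\exists k)\ts^n_k$, and $(\forall n)\fs^n$ all preserve while $\semo$, $\sts^2_3$, $\sads$ do not, then iterate to build the separating $\omega$-model. The gap is in your choice of $\Pcal$. You propose ``preservation of $k$ hyperimmunities'' (joint hyperimmunity of a $k$-tuple of sets), which is the notion from Definition 6.2.2 of Patey's thesis and suffices only for the $\sads$ case. The macros ``2-hyperimmunity''/``2-hyperimmune'' in the preamble do \emph{not} abbreviate that notion; they refer to a genuinely more refined property invented in this paper: a \emph{bifamily} $\Hcal$ (a downward-closed collection of \emph{pairs} of finite sets) is $C$-2-hyperimmune if every $C$-computable \emph{biarray} $\langle E_n, F_{n,m} \rangle$ meets $\Hcal$ at some $(n,m)$. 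The paper proves (Lemma~\ref{lem:hyper-bifamily}) that for bifamilies of the special form $\Hcal(A,B)$ this collapses to ordinary hyperimmunity of $A$ and $B$, and explicitly notes that preservation of the bifamily notion implies preservation of two hyperimmunities --- so if the simpler notion had sufficed there would be no reason to define the finer one.

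Where your proposal breaks is the diagonalization against $\sts^2_3$ and $\semo$. The paper builds, by a finite-injury priority argument (Proposition~\ref{thm:sts23-priority}), a stable computable coloring $f:[\omega]^2\to 3$ and uses the bifamily $\Hcal(f)$ of all pairs $(E,F)$ with $E < F$, $E\subseteq C_1(f)$, $F\subseteq C_2(f)$, \emph{and} $E\to_0 F$ (i.e.\ $f(x,y)=0$ for all $x\in E$, $y\in F$). The relational condition $E\to_0 F$ between the two halves of the pair is exactly what a thin set or a transitive subtournament cannot realize, and it has no analogue in a tuple of independently hyperimmune sets; your ``four carefully chosen hyperimmune sets'' would have to encode this cross-condition, and you give no mechanism for doing so. You also do not account for why the bound on colors is exactly $4$: in the forcing extension lemma for $\ts^1_4$ (Case 1), three colors get ruled out and a fourth is needed to leave a live color for the reservoir --- with only $3$ colors the argument fails outright, as the paper notes in a footnote. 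Finally, the statement includes $(\forall n)(\exists k)\ts^n_k$ and $(\forall n)\fs^n$, which require the recursive bound $d_n$ and the technical notions of \tssufficient\ and \fssufficient\ collections to carry out the Seetapun-style forcing; ``Wang-style largeness'' is the right instinct but your sketch leaves the actual combinatorics (Definitions~\ref{def:tssuff}, \ref{def:fssuff} and Lemmas~\ref{lem:tssuff}, \ref{lem:fs-with-enough-sets-free}) entirely unaddressed.
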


In  Theorem \ref{thm:many-and-ts24-not-sem-and-many},
 $\sts^2_k$, $\semo$ and $\sads$ denote the restriction
of $\ts^2_k$, $\emo$ and $\ads$ to stable colorings, respectively.
A coloring $f : [\omega]^2 \to k$ is \emph{stable} if for every $x \in \omega$,
$\lim_s f(x, s)$ exists. In the case of $\sads$, this yields the statement ``Every linear order
of type $\omega+\omega^{*}$ has an infinite ascending or descending sequence.''
$\coh$ is the statement ``For every sequence of sets $\vec R=(R_0, R_1, \dots)$,
there is an infinite set $C$ such that $C \subseteq^{*} R_i$ or $C \subseteq^{*} \overline{R}_i$
for every $i \in \omega$.'' Such set $C$ is called \emph{$\v R$-cohesive}

The separation result, Theorem \ref{thm:many-and-ts24-not-sem-and-many},
  shows that although the thin set theorem
shares many lower bounds with Ramsey's theorem, allowing more colors in the solutions
yields a statement with strictly weaker computability-theoretic properties.
Cholak, Giusto,
Hirst and Jockusch~\cite{Cholak2001Free}
Question 7.4 asked whether $\msf{FS}^2+\msf{CAC}$ implies $\msf{RT}_2^2$.
Bovykin and Weiermann~\cite{Bovykin2005strength} proved
that $\msf{RT}_2^2$ is equivalent to $\msf{EM}+\msf{ADS}$.
It's well known $\msf{CAC}$ implies $\msf{ADS}$.
Thus Theorem \ref{thm:many-and-ts24-not-sem-and-many} ($\msf{FS}^2$ does not imply
$\msf{EM}$) is
a partial result towards a negative answer to Cholak, Giusto,
Hirst and Jockusch's question.

For a problem $\msf{P}$, a
computable $\msf{P}$-instance $I^*$ is
 \emph{universal} iff for every computable $\msf{P}$-instance
 $I$, every solution of $I^*$ computes a solution of $I$.
 The most well known example of a universal instance is
 for $\msf{WKL}$, there is a computable tree $T^*\subseteq 2^{<\omega}$ so that every infinite path through
  $T^*$ is of PA degree. Thus every infinite path through $T^*$
 computes an infinite path of a given computable infinite tree $ T$.
 Patey \cite{patey2015degrees} systematically studied which Ramsey type problem admits universal instance.
Many Ramsey type problems do not admit a universal instance.
Often, if a problem admits a universal instance, it is relatively easy to construct
one.  The coding is not hard when it exists.
 For several problems, the question remains.
The second author asked in \cite{Patey2015Open} that whether $\msf{EM}$
admits a universal instance.
We answer this question negatively.
\begin{theorem}
\label{uemth1}
$\msf{EM}$ does not have universal instance.
\end{theorem}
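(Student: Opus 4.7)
The plan is to suppose $T^*$ is a would-be universal computable $\emo$-instance and to derive a contradiction by constructing a computable tournament $T$ together with an infinite $T^*$-transitive set $H^*$ such that no $\Phi_e^{H^*}$ is an infinite $T$-transitive subset of $\omega$. Since $T^*$ is assumed universal, $H^*$ would have to compute a solution of $T$, yielding the contradiction. The set $H^*$ is built by Erd\H{o}s-Moser forcing relative to $T^*$: conditions are pairs $(F, X)$ where $F$ is a finite $T^*$-transitive set and $X$ is an infinite reservoir of integers greater than $\max F$ which is \emph{pre-transitive} over $F$, in the sense that every finite $F' \subseteq F \cup X$ with $F \subseteq F'$ is $T^*$-transitive (such reservoirs are produced by standard EM forcing machinery). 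Simultaneously I build $T$ in stages, arranging that at stage $s$ the tournament $T$ is committed only on pairs of integers below a threshold $N_s$, with all subsequent commitments involving only integers larger than $N_s$; this automatically makes $T$ a well-defined computable tournament.

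For each Turing functional index $e$, impose the requirement $R_e$: $\Phi_e^{H^*}$ is not an infinite $T$-transitive set. To meet $R_e$ at a stage with condition $c = (F, X)$ and threshold $N = N_s$, I look for a finite extension $F' \supseteq F$ with $F' \setminus F \subseteq X$ and three integers $a_1 < a_2 < a_3 > N$ such that $\Phi_e^{F'}(a_i) \halts = 1$ for each $i \in \{1,2,3\}$. In the positive case (the \emph{3-cycle case}), I commit $T(a_1, a_2), T(a_2, a_3), T(a_1, a_3)$ to form a directed 3-cycle (a non-$T$-transitive triple), update $N_{s+1} = a_3$, and extend the condition to $(F', X \cap (a_3, \infty))$, which remains pre-transitive over $F'$ because pre-transitivity is inherited under extension. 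In the negative case (the \emph{force-finite case}), one has $|\Phi_e^{F'} \cap (N, \infty)| \leq 2$ for every admissible finite extension $F'$; by monotonicity of Turing functionals, the set $\Phi_e^{H^*} \cap (N, \infty)$ is an increasing union of sets each of size at most $2$, hence has size at most $2$, so $\Phi_e^{H^*}$ is finite and $R_e$ is trivially met.

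The principal technical obstacle is constructing and maintaining pre-transitive reservoirs: producing, at each stage, an infinite sub-reservoir of $X$ that is pre-transitive over the extended finite part $F'$ and computable in a controlled oracle. This is precisely where an auxiliary application of the Erd\H{o}s-Moser combinatorial lemma inside the forcing is needed, in the form developed by Lerman, Solomon and Towsner and further refined by Patey. A secondary and much milder obstacle is the bookkeeping needed to interleave the handling of the requirements $R_e$ with the gradual commitment of edges in $T$: since all new $T$-commitments are on fresh pairs of integers above the current threshold, no consistency conflict can arise, so $T$ ends up as a legitimate computable tournament and every $R_e$ is satisfied, delivering the promised contradiction to the universality of $T^*$.
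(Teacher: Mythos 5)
Your approach is not the paper's: the paper constructs a fixed \emph{pair} of computable stable tournaments $(T_0, T_1)$ by a finite-injury priority argument (Proposition~\ref{uemprop1}), making the pair ``4-hyperimmune,'' and then shows by forcing (Theorem~\ref{uemth0}) that for every computable tournament $T$, some infinite $T$-transitive set $G$ fails to compute a solution of $T_0$ or of $T_1$. Crucially, the priority construction of $(T_0, T_1)$ is decoupled from the forcing, so their computability is immediate. Your plan instead builds $T$ \emph{during} the forcing of a solution $H^*$ of $T^*$, and the computability of $T$ is the weak link.

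Concretely, whether requirement $R_e$ falls into the ``3-cycle'' case or the ``force-finite'' case is a question about the existence of a finite extension $F'$ with $F'\setminus F\subseteq X$, where $X$ is the reservoir of the current Mathias/EM condition. These reservoirs are not computable: in Erd\H{o}s--Moser forcing they are thinned through nonempty $\Pi^0_1$ classes via $\wkl$-style steps and constrained to the $\Delta^0_2$ partition induced by the limit behavior of $T^*$. So the search is not a computable search, and the assertion ``this automatically makes $T$ a well-defined computable tournament'' does not hold; yet a computable $T$ is precisely what you must exhibit to contradict universality, so this is fatal rather than a side issue. The standard fix is a Seetapun-style mechanism: computably enumerate, against all possible reservoirs represented inside $\Pi^0_1$ classes of partitions, a finite \emph{family} of candidate extensions, commit a $T$-3-cycle for each candidate triple, and argue that whichever reservoir is realized, one candidate is a legal extension, while using compactness and $\wkl$-preservation of the relevant hyperimmunity notion when the enumeration fails. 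You allude to this via Lerman--Solomon--Towsner and Patey but treat it as a subordinate ``obstacle about maintaining reservoirs,'' whereas the paper requires the whole apparatus of 4-arrays, \propiatfshyp\ and Lemma~\ref{uemlemmain}'s case analysis over 8-, 4- and 2-partitions to make it go through; in particular it is not clear that a single dynamic $T$, rather than a pair, can absorb the subcase of Lemma~\ref{uemlemmain} in which both relevant partition cells are finite and the forcing must \emph{choose} which target to defeat. (Minor: to obtain a directed 3-cycle on $a_1<a_2<a_3$ you should commit $T(a_3,a_1)$, not $T(a_1,a_3)$; as written the triple is transitive.)
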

The first author asked a similar question with respect to an arbitrary instance
of $\msf{RT}_2^1$.
Clearly, when an instance is universal, it encodes information about
every other computable instance.
For a problem $\msf{P}$, we consider the mass problem generated by instances of $\msf{P}$.
For $\msf{P}$-instances $I,\h I$ (not necessarily computable), we say $I$ \emph{encodes} $\h I$ iff
every solution of $I$ computes a solution of $\h I$. That is,
the set of solutions of $\h I$ is Muchnick reducible to that of $I$.
Liu \cite{Liu2015Cone} asked whether there is a $\msf{RT}_2^1$ instance $X$
 that is maximal
(in the lattice of the encoding relation) in the sense that
for every $\msf{RT}_2^1$ instance $Y$, if $Y$ encodes $X$, then
$X$ encodes $Y$.

\subsection{Organization}
The paper is divided into two main sections, corresponding to the proofs
of Theorem~\ref{thm:many-and-ts24-not-sem-and-many} and Theorem~\ref{uemth1},
respectively. In section~\ref{sect:fs-ts-sem}, we introduce a framework
for preservation of \propertyL, and develop its basic properties in subsection~\ref{subsect:double-immunity-related}.
We then prove in subsection~\ref{subsect:ts24-preserves} that $\ts^2_4$ preserves \propertyL, and then generalize the proof
to $(\forall n)(\exists k)\ts^n_k$ in subsection~\ref{subsect:tsn-preserves}. We prove that $\fs^2$
preserves \propertyL\ in subsection~\ref{subsect:fs2-preserves}, and again generalize it to $(\forall n)\fs^n$
in subsection~\ref{subsect:fsn-preserves}.
In section \ref{uemsec1}, we prove Theorem \ref{uemth1}.

\subsection{Notation}

Given two sets $A$ and $B$, we write $A < B$ iff $x<y$ for all $x\in A,y\in B$ ($A < \emptyset$ and $\emptyset < B$
both hold); we write $A>y$ iff $x>y$ for all $x\in A$.
We use $|A|$ to  denote the cardinal of the set $A$.
Denote by $[A]^n$ the collection of $n$-element subsets of $A$;
$[A]^{<\omega}$ the collection of finite subsets of $A$.
Usually, we use $F,E,D$ and sometimes $\sigma,\tau$ to denote finite sets of integers;
$X,Y,Z$ to denote infinite sets of integers;
$A,B,H,G$ to denote sets of integers.


A \emph{Mathias condition} is a pair $(F, X)$
where $F$ is a finite set, $X$ is an infinite set.
 A condition $(E, Y)$ \emph{extends } $(F, X)$ (written $(E, Y) \leq (F,X)$)
if $E\supseteq F, E\setminus F>F, E\setminus F\subseteq X, Y\subseteq X$. Note that we do not require $F < X$ for a condition $(F, X)$, but continuity is ensured by the extension relation.
A set $G$ \emph{satisfies} a Mathias condition $(F, X)$
if $F \subseteq G$, $G \setminus F \subseteq X, G\setminus F>F$.
A Mathias condition $c$ is seen as the collection $\{G:G\text{ satisfies }(F,X)\}$;
we define $c\leq d$ iff the collection of $c$ is a sub collection of   $d$.
We adopt the convention that if $\Phi^F(n)\downarrow$ and $G>F$,
then $\Phi^{F\cup G}(n)\downarrow =\Phi^F(n)$.

\section{Free and thin sets which are not transitive}\label{sect:fs-ts-sem}

This section is devoted to the proof of Theorem~\ref{thm:many-and-ts24-not-sem-and-many} that we recall now.

\begin{theorem*}[\ref{thm:many-and-ts24-not-sem-and-many}]
Over $\rca$, $\wkl + \coh + \ts^2_4 + (\forall n)(\exists k)\ts^n_k + (\forall n)\fs^n$
implies none of $\semo$, $\sts^2_3$ and $\sads$.
\end{theorem*}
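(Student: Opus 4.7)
The plan is to introduce a framework of preservation of \propertyL\ and separate the hypothesis from each of the three conclusions within the same framework. Concretely, I fix a suitable formulation of \propertyL\ for a pair of infinite sets $(A_0,A_1)$ (roughly: no $A_0 \oplus A_1$-computable function simultaneously dominates the principal functions of both sets), and say that a problem $\msf{P}$ \emph{preserves \propertyL} if, for every \hyper\ pair $(A_0,A_1)$ and every $(A_0 \oplus A_1)$-computable $\msf{P}$-instance $I$, there is a solution $G$ to $I$ such that $(A_0,A_1)$ remains \hyper\ relative to $G \oplus A_0 \oplus A_1$. Standard iterated-forcing bookkeeping shows that the class of statements preserving \propertyL\ is closed under finite conjunction, so there is an $\omega$-model containing a \hyper\ pair and closed under the preserved principles. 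It therefore suffices to prove separately that each of $\wkl$, $\coh$, $\ts^2_4$, $(\forall n)(\exists k)\ts^n_k$ and $(\forall n)\fs^n$ preserves \propertyL, and that none of $\semo$, $\sts^2_3$, $\sads$ does.

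For the positive side, the preservations of $\wkl$ and $\coh$ are developed in subsection~\ref{subsect:double-immunity-related} by standard compactness and cohesive-forcing arguments. The technical heart is the preservation by $\ts^2_4$ (subsection~\ref{subsect:ts24-preserves}): I would run a Mathias forcing whose reservoirs $X$ keep $(A_0,A_1)$ \hyper\ relative to $X$, and at each step define a forcing question that, given a computable coloring $f\colon [\omega]^2 \to 4$, asks whether one can force a fifth-coloring-avoiding extension inside a \hyper\ reservoir. The pigeonhole is arranged so that the four colors split as $2 \cdot 2$, allowing us to drop one color while still having room to protect both components of the \hyper\ pair. The generalization to $(\forall n)(\exists k)\ts^n_k$ in subsection~\ref{subsect:tsn-preserves} iterates this argument with $k$ chosen large enough to absorb the extra combinatorial overhead from $n$-tuples, and the $\fs^n$ cases (subsections~\ref{subsect:fs2-preserves} and~\ref{subsect:fsn-preserves}) then follow via the standard reduction of $\fs^n$ to a thinning argument together with an auxiliary coloring identifying the forbidden ``$f(\sigma)\notin\sigma$'' color.

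For the negative side I would construct, for each of $\semo$, $\sts^2_3$, $\sads$, a computable \hyper\ pair $(A_0,A_1)$ together with a computable stable instance whose every infinite solution computes a function destroying \propertyL\ of $(A_0,A_1)$. The coding is easiest for $\sads$: arrange a linear order of type $\omega+\omega^{*}$ so that the positions of the blocks encode the principal functions of $A_0$ and $A_1$, so that any infinite ascending or descending sequence computes a simultaneous dominating function. The $\sts^2_3$ and $\semo$ cases adapt this by spreading the code across the second coordinate of a stable pair coloring (resp.\ a stable tournament), using the stability to recover the coded function from the limit behavior of any thin/transitive set.

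The main obstacle is the preservation of \propertyL\ by $\ts^2_4$. The delicate point is to design a forcing question whose positive answer yields a valid Mathias extension inside a \hyper\ reservoir, and whose negative answer still leaves enough room, using only three of the four colors, to continue forcing while preserving both components of $(A_0,A_1)$ simultaneously; this is where the specific value $k=4$ enters and where the argument does not routinely generalize downward to $\ts^2_3$, consistent with the asymmetry appearing in the statement of Theorem~\ref{thm:many-and-ts24-not-sem-and-many}.
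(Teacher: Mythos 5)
Your meta-structure matches the paper: define a preservation notion, prove that $\wkl$, $\coh$, $\ts^2_4$, $(\forall n)(\exists k)\ts^n_k$, $(\forall n)\fs^n$ preserve it, prove that $\semo$, $\sts^2_3$, $\sads$ do not, and combine via an $\omega$-model as in the paper's unnamed closure lemma. However, the specific formulation you propose — 2-hyperimmunity \emph{of a pair of infinite sets} $(A_0,A_1)$ — is strictly weaker than what the paper uses and, as stated, would not support the negative side for $\sts^2_3$ and $\semo$.

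The paper works with a general \emph{bifamily} $\Hcal$: a downward-closed collection of \emph{pairs of finite sets} $(E,F)$, with 2-hyperimmunity meaning every computable biarray $\langle E_n, F_{n,m}\rangle$ meets $\Hcal$. Your pair-of-sets notion corresponds to the special case $\Hcal(A_0,A_1) = \{(E,F) : E\subseteq\overline{A_0},\ F\subseteq\overline{A_1}\}$, and Lemma~\ref{lem:hyper-bifamily} shows this is exactly ``both $A_0$ and $A_1$ are hyperimmune.'' This suffices for $\sads$, since any infinite ascending (resp.\ descending) sequence is a subset of the $\omega$ (resp.\ $\omega^{*}$) part and hence kills the hyperimmunity of one of the two sets. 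But the $\sts^2_3$ and $\semo$ instances built by the priority argument (Proposition~\ref{thm:sts23-priority}) use a bifamily $\Hcal(f)$ whose pairs $(E,F)$ satisfy $E\subseteq C_1(f)$, $F\subseteq C_2(f)$, \emph{and} the coupled condition $E\to_0 F$. The coupling is essential: an infinite $f$-thin set (or transitive subtournament) $G$ may very well intersect both $C_1(f)$ and $C_2(f)$ infinitely, so $G$ does not destroy the hyperimmunity of either set and your pair $(A_0,A_1)$ would remain \hyper\ relative to $G$. What $G$ does destroy is the $\to_0$ part: thinness for color $0$ (or transitivity) forces $f(x,y)\neq 0$ on pairs from $G$, so the biarray built from $G$ never lands in $\Hcal(f)$. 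This mixed-color constraint cannot be factored into two independent set-hyperimmunity conditions, so the sketch of ``spreading the code across the second coordinate'' does not produce a pair $(A_0,A_1)$ doing the job; you need the full bifamily generality.

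A smaller inaccuracy: the pigeonhole in the $\ts^2_4$ forcing is not a ``$2\cdot 2$ split.'' In Case~1 of Lemma~\ref{lem:ts14-strong-preserve-double-immunity-force-Q}, failure to find $U_n$ yields, by compactness, a pair of colorings $g,h:\omega\to 4$ and \emph{three} surviving colors $\{i_0,i_1,i_2\}$; since each of $g$ and $h$ can make at most one of those colors co-finite, at least one color $i$ leaves $\{x : g(x)\neq i,\ h(x)\neq i\}$ infinite. This 3-colors-versus-2-functions margin is precisely why $k=4$ works and $k=3$ does not (the paper's footnote: with $\ts^1_3$ one only gets two surviving colors, which $g$ and $h$ can each saturate).
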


For this, we are going to prove that $\wkl$, $\coh$, $\ts^2_4$ and $\fs^n$
preserve a computability-theoretic notion that we call \propertyL,
while none of $\semo$, $\sts^2_3$ and $\sads$ does (see Lemma \ref{lem:transformtopreservation}
for why this is enough).
We introduce the
concept of 2-hyperimmunity preservation next.  The required results about
2-hyperimmunity preservation are proved in the remaining sections of this section
(see Figure \ref{fig:framework0} for where they are proved).
\begin{definition}\
\begin{itemize}
	\item[(1)] A \emph{bifamily} is a collection $\Hcal$
of ordered pairs of finite  sets which is closed downward under the product subset relation,
that is, such that if $(C, D) \in \Hcal$ and $E \subseteq C$ and $F \subseteq D$,
then $(E, F) \in \Hcal$.

	\item[(2)] A \emph{biarray} is a collection of finite  sets
$(\vec{E},\vec{F}) = \langle E_n, F_{n,m} : n, m \in \omega \rangle$ such that
$E_n >n$ and $F_{n,m} > m$ for every $n, m \in \omega$.
A biarray $(\vec{E},\vec{F})$ \emph{meets} a bifamily $\Hcal$ if there
is some $n, m \in \omega$ such that $(E_n, F_{n,m}) \in \Hcal$.

	\item[(3)] A bifamily $\Hcal$ is $C$-\emph{\hyper}\
if every $C$-computable biarray meets $\Hcal$.
\end{itemize}
\end{definition}

We shall relate the notion of \propertyL\ and various notions
of immunity in section~\ref{subsect:double-immunity-related}.
For notational convenience,
in this section we regard each Turing machine $\Phi$ as computing
a biarray. We will therefore assume that whenever $\Phi(n;1)$ converges,
then it will output (the canonical index of) a finite  set $E_n > n$ and
 whenever $\Phi(n,m;2)$
converges, then it will output a finite  set $F_{n,m} > m$.

\begin{definition}Fix a problem $\Psf$.
\begin{itemize}
	\item[(1)] $\msf{P}$ \emph{preserves} \propertyL\ if
for every  bifamily $\Hcal$ that is $C$-\hyper,
every $C$-computable $\msf{P}$-instance
admit a solution $G$ such that
$\Hcal$ is $C\oplus G$-\hyper.

	\item[(2)] $\msf{P}$ \emph{strongly preserves} \propertyL\ if
for every bifamily $\Hcal$ that is $C$-\hyper,
every $\msf{P}$-instance
admit a solution $G$ such that
$\Hcal$ is $C\oplus G$-\hyper.
\end{itemize}
\end{definition}

The following lemma is a particular case of Lemma~3.4.2 in~\cite{Patey2016reverse}.
We reprove it for the sake of  completeness.

\begin{lemma}\label{lem:transformtopreservation}
If some problems $\Psf_1, \Psf_2, \dots$ preserve \propertyL\
while another problem $\Qsf$ does not, then the conjunction $\bigwedge_i \Psf_i$
does not imply $\Qsf$ over $\rca$.
\end{lemma}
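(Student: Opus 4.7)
The plan is to build a countable $\omega$-model $\Mcal$ witnessing that $\bigwedge_i \Psf_i$ does not imply $\Qsf$, by using preservation of 2-hyperimmunity as the invariant that is maintained throughout the construction.

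First, unwind what the failure of preservation for $\Qsf$ gives us: there is a set $C$, a bifamily $\Hcal$ that is $C$-2-hyperimmune, and a $C$-computable $\Qsf$-instance $I$ such that no solution $G$ of $I$ keeps $\Hcal$ being $C \oplus G$-2-hyperimmune. This $I$ will be the instance of $\Qsf$ that has no solution in $\Mcal$.

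Next, I would construct a nested sequence $C = X_0 \leq_T X_1 \leq_T X_2 \leq_T \cdots$ of sets, each of which makes $\Hcal$ 2-hyperimmune, and whose downward closure in Turing degrees forms the desired ideal. Fix a uniform enumeration of all triples $(i, e, s)$ where $i$ indexes a $\Psf_j$ in our list and $e$ is a Turing index; at stage $s+1$ we consider the triple $(i_s, e_s, t_s)$. If $\Phi_{e_s}^{X_{t_s}}$ is a $\Psf_{i_s}$-instance (with $t_s \leq s$ guaranteeing that $X_{t_s}$ has already been defined), we apply preservation of 2-hyperimmunity for $\Psf_{i_s}$ to $\Hcal$ with oracle $X_s$: this yields a solution $G_s$ such that $\Hcal$ is $X_s \oplus G_s$-2-hyperimmune, and we set $X_{s+1} = X_s \oplus G_s$. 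Otherwise we set $X_{s+1} = X_s$. Standard bookkeeping guarantees that every $\Psf_i$-instance that becomes computable from some $X_s$ is eventually treated.

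Let $\Mcal = \{ Y : Y \leq_T X_s \text{ for some } s \}$. This is closed under $\leq_T$ by construction and closed under $\oplus$ because the $X_s$ form an increasing join-chain, so $\Mcal$ is a Turing ideal and therefore determines an $\omega$-model of $\rca$. The enumeration guarantees that every $\Psf_i$-instance in $\Mcal$ has a solution in $\Mcal$, so $\Mcal \models \bigwedge_i \Psf_i$. By the inductive use of preservation, $\Hcal$ is $X_s$-2-hyperimmune at every stage, hence $Y$-2-hyperimmune for every $Y \in \Mcal$. Finally, $I \in \Mcal$ since it is $C$-computable, but any $G \in \Mcal$ that solved $I$ would satisfy $C \oplus G \in \Mcal$, contradicting the choice of $I$; therefore $\Mcal \not\models \Qsf$.

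The only nontrivial step is the bookkeeping in the middle paragraph, which is entirely routine: one must interleave all candidate problems $\Psf_i$ and all Turing indices so that no instance that eventually appears in the ideal is forgotten, while ensuring that the single invariant ``$\Hcal$ is $X_s$-2-hyperimmune'' is preserved at each stage. This invariant is exactly what the hypothesis on each $\Psf_i$ delivers, so no other obstacle arises.
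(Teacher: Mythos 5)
Your proof is correct and takes essentially the same approach as the paper: both constructions extract $C$, $\Hcal$, and the witnessing instance, build an increasing $\leq_T$-chain over $C$ that maintains $\Hcal$'s 2-hyperimmunity while satisfying all $\Psf_i$-instances via preservation, and take the generated Turing ideal as the separating $\omega$-model. The paper simply states the existence of the chain with properties (i)--(ii) and lets the reader fill in the bookkeeping that you have written out explicitly.
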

\begin{proof}
Since $\Qsf$ does not preserve \propertyL, there is some set $C$,
a bifamily $\Hcal$ that is $C$-\hyper, and a $\Qsf$-instance $B$
such that for every solution $G$, $\Hcal$ is not $C \oplus G$-\hyper.
Since each $\Psf_i$ preserve \propertyL, we can define an infinite
sequence of sets $C = Z_0 \leq_T Z_1 \leq_T \dots$ such that
\begin{itemize}
	\item[(i)] $\Hcal$ is $Z_n$-\hyper\ for every $n$
	\item[(ii)] For every $i, n \in \omega$, every $Z_n$-computable
		$\Psf_i$-instance has a $Z_m$-computable solution for some $m$
\end{itemize}
Consider the $\omega$-structure $\Mcal = \{ X : (\exists n) X \leq_T Z_n \}$.
By construction, $B \in \Mcal$,
and by (i), $\Hcal$ is $C \oplus G$-\hyper\ for every $G \in \Mcal$.
It follows that the $\Qsf$-instance $B \in \Mcal$ has no solution in $\Mcal$, so $\Mcal \not \models \Qsf$.
By (ii), $\Mcal \models \Psf_i$ for every $i \in \omega$. This completes the proof.
\end{proof}

\begin{figure}[htbp]
\begin{center}
\begin{tikzpicture}[x=2.5cm, y=2cm, widget/.style={rectangle, draw,align=center},decoration={
    markings,
    mark=at position 0.5 with {\arrow{>}}}]
	
	\node[widget] (A) at (0, -1) {Corollary~\ref{cor:wkl-double-immunity}\\ $\wkl$ preserves};
	\node[widget] (B) at (0, 1) {Corollary~\ref{cor:coh-preserves-double-immunity}\\ $\coh$ preserves};
	\node[widget] (D) at (-2, 1) {Lemma~\ref{lem:ts-n-to-np1-strongly-double-immune}\\ $\ts^{n}_{d_{n-1}+1}$ preserves};
	\node[widget] (G) at (2, 1) {Lemma~\ref{lem:fs-n-to-np1-strongly-double-immune}\\ $\fs^n$ preserves};
	\node[widget] (C) at (-1, 0) {Induction hypothesis\\ $\ts^s_{d_s+1}$ strongly preserves\\ for $s< n $};
	\node[widget] (F) at (1, 0) {Induction hypothesis\\ $\fs^s$ strongly preserves\\ for $s<n$};
	\node[widget] (E) at (-2, -1) {Theorem~\ref{thm:ts-n-strongly-double-immune-assuming}\\ $\ts^{n}_{d_n+1}$ strongly preserves};
	\node[widget] (H) at (2, -1) {Theorem~\ref{thm:strong-preservation-fs-left-trapped}\\ $\fs^n$ for left trapped functions\\
	 strongly preserves};
	\node[widget] (I) at (2, -2) {Lemma~\ref{lem:fs-n-to-np1-strongly-double-immune}\\ $\fs^n$ strongly preserves};

	\draw[postaction={decorate},thick] (B) -- (D);
	\draw[postaction={decorate},thick] (B) -- (G);
	\draw[postaction={decorate},thick] (C) -- (D);
	\draw[postaction={decorate},thick] (H) -- (I);
	\draw[postaction={decorate},thick] (D) -- (E);
	\draw[postaction={decorate},thick] (G) -- (H);
	\draw[postaction={decorate},thick] (A) -- (E);
	\draw[postaction={decorate},thick] (A) -- (H);
	\draw[postaction={decorate},thick] (F) -- (G);
	\draw[postaction={decorate},thick] (F) -- (H);
	\draw[postaction={decorate},thick] (C) -- (E);
	\draw[postaction={decorate},thick] (C) -- (H);
\end{tikzpicture}

\caption{\label{fig:framework0} Diagram of dependencies between the proofs of preservation of \propertyL.
An arrow from P to Q means that Q depends on P.}
\end{center}

\end{figure}
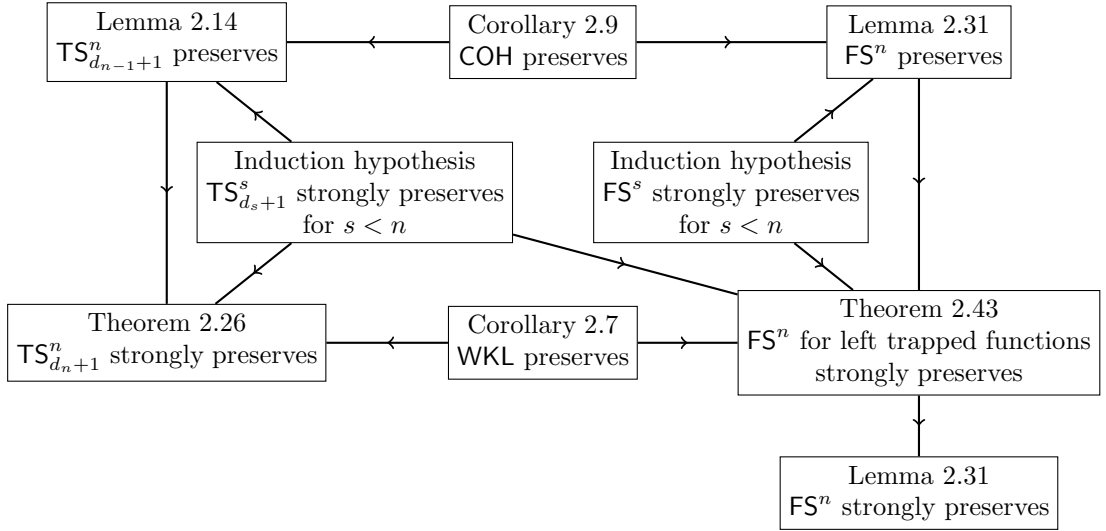

\subsection{Relation with immunity notions}\label{subsect:double-immunity-related}

Given a pair of infinite sets $A, B \subseteq \Nb$,
we let $\Hcal(A,B)$ be the bifamily of all finite  pairs $(E, F)$
such that $E \subseteq \overline{A}$ and $F \subseteq \overline{B}$.
Recall that an infinite set $H$ is $C$-\emph{hyperimmune} if
for every $C$-computable array\footnote{By a computable array
of finite sets $V_0,V_1,\cdots$, we mean a computable function $\alpha:\omega\rightarrow \omega$
such that $\alpha(n)$ is the canonical index of $V_n$.} of finite  sets $V_0, V_1, \dots$
such that $V_n > n$, there is some $n$ such that $V_n \subseteq \overline{H}$.

\begin{lemma}\label{lem:hyper-bifamily}
Two sets $A$ and $B$ are $C$-hyperimmune if and only if $\Hcal(A, B)$ is $C$-\hyper.
\end{lemma}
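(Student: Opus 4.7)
The plan is to prove both directions by directly unwinding the definitions; each direction reduces to a careful choice of auxiliary biarray or array.

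For the forward direction ($\Rightarrow$), I will assume $A$ and $B$ are $C$-hyperimmune and take an arbitrary $C$-computable biarray $\langle E_n, F_{n,m} : n,m \in \omega \rangle$. The first coordinate $\langle E_n \rangle_n$ is itself a $C$-computable array of finite sets with $E_n > n$, so by $C$-hyperimmunity of $A$ there exists $n_0$ with $E_{n_0} \subseteq \overline{A}$. Fixing this $n_0$, the slice $\langle F_{n_0,m} \rangle_m$ is a $C$-computable array with $F_{n_0,m} > m$, so by $C$-hyperimmunity of $B$ there exists $m_0$ with $F_{n_0,m_0} \subseteq \overline{B}$. Then $(E_{n_0}, F_{n_0,m_0}) \in \Hcal(A,B)$, so the biarray meets $\Hcal(A,B)$, as required.

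For the backward direction ($\Leftarrow$), I will prove the $C$-hyperimmunity of $A$ (the case of $B$ being symmetric). Given any $C$-computable array $V_0, V_1, \dots$ with $V_n > n$, I will build the $C$-computable biarray defined by $E_n := V_n$ and $F_{n,m} := \{m+1\}$, which satisfies the size requirements $E_n > n$ and $F_{n,m} > m$. By $C$-2-hyperimmunity of $\Hcal(A,B)$ this biarray meets $\Hcal(A,B)$: there are $n,m$ with $V_n \subseteq \overline{A}$ and $m+1 \in \overline{B}$. The first clause already delivers the desired $n$ with $V_n \subseteq \overline{A}$. For $B$ one reverses the roles, taking $E_n := \{n+1\}$ and $F_{n,m} := V_m$.

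I do not expect a genuine obstacle: the proof is essentially a definitional unpacking. The only conceptual point worth flagging is that the forward direction is not a mere conjunction of two independent applications of hyperimmunity; one first extracts $n_0$ from $A$-hyperimmunity and then, with $n_0$ fixed, invokes $B$-hyperimmunity on the corresponding slice. This nested use is exactly what the 2-hyperimmunity of $\Hcal(A,B)$ encodes, and it makes the equivalence natural. The backward direction works uniformly because the constant-size ``dummy'' arrays $\{n+1\}$ and $\{m+1\}$ only require $\overline{A}$ and $\overline{B}$ to be nonempty on cofinal tails, which is an automatic consequence of $\Hcal(A,B)$ being $C$-2-hyperimmune (by applying the hypothesis to shifted singleton biarrays).
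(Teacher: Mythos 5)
Your proof is correct and, up to phrasing the backward direction directly rather than by contrapositive as in the paper, uses exactly the same constructions: the nested extraction of $n_0$ then $m_0$ for the forward direction, and dummy singleton coordinates $\{n+1\}$, $\{m+1\}$ for the backward one. The closing remark about $\overline{A},\overline{B}$ being cofinally nonempty is harmless but unnecessary, since the conclusion $V_n\subseteq\overline{A}$ already drops out of the biarray meeting $\Hcal(A,B)$ without any separate appeal to the second coordinate.
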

\begin{proof}
For convenience, we assume $C=\emptyset$
since the result relativizes.
Assume that $A$ and $B$ are hyperimmune,
and fix a computable biarray $(\vec{E}, \vec{F})$.
By hyperimmunity of $A$ applied to $\vec{E}$, there is some $n$ such that $E_n \subseteq \overline{A}$.
By hyperimmunity of $B$ applied to $F_{n,0}, F_{n,1}, \dots$, there is some $m$ such that
$F_{n,m} \subseteq \overline{B}$. It follows that the biarray $(\vec{E}, \vec{F})$ meets $\Hcal(A,B)$.

Assume now that either $A$ or $B$ is not hyperimmune.
Suppose first that $A$ is not hyperimmune. Let $E_0, E_1, \dots$ be a computable array of finite  sets
such that $E_n > n$ and $E_n \cap A \neq \emptyset$ for every $n$.
Then letting $F_{n,m} = \{m+1\}$ for every $m$, the computable biarray $(\vec{E}, \vec{F})$
does not meet $\Hcal(A, B)$.
Suppose now that $B$ is not hyperimmune.
Let $D_0, D_1, \dots$ be a computable array of finite sets
such that $D_n > n$ and $D_n \cap B \neq \emptyset$ for every $n$.
Then letting $E_n = \{n+1\}$ and $F_{n,m} = D_m$ for every $m, n \in \omega$,
the computable biarray $(\vec{E}, \vec{F})$
does not meet $\Hcal(A, B)$. In both cases, $\Hcal(A,B)$ is not  \hyper.
\end{proof}

It follows that if a problem $\Psf$ preserves \propertyL,
then it also preserves 2 hyperimmunities, in the sense of
Definition 6.2.2 in~\cite{Patey2016reverse}. Since $\sads$
is known not to preserve 2 hyperimmunities (see Corollary~10.3.5 in~\cite{Patey2016reverse}),
we can immediatly conclude that $\sads$ does not preserve \propertyL.
We will nevertheless recall the argument.

\begin{corollary}
$\sads$ does not preserve \propertyL.
\end{corollary}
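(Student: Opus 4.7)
The plan is to exhibit directly a computable stable linear order $L = (\Nb, <_L)$ of type $\omega + \omega^*$ whose $\omega$-part $L_0$ and $\omega^*$-part $L_1$ are both hyperimmune, and then argue via Lemma~\ref{lem:hyper-bifamily} that the bifamily $\Hcal(L_0, L_1)$ witnesses the failure of preservation for the $\sads$-instance $L$.

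First I would build $L$ by a standard finite-injury priority construction on $\Nb$. At stage $s$, I insert $s$ either immediately above the current maximum of the ``$\omega$-side'' or immediately below the current minimum of the ``$\omega^*$-side''; its commitment to $L_0$ or $L_1$ is then permanent, which keeps $L$ of order type $\omega+\omega^*$ and computable. The requirements are, for each $e$: $R_{2e}$ demands that some $V_n$ from the $e$-th computable sequence of finite sets with $V_n > n$ be placed entirely in $L_1$, so that $V_n \subseteq \overline{L_0}$; symmetrically $R_{2e+1}$ demands such a $V_n$ in $L_0$. Each requirement acts once, waiting for an $n$ large enough that the elements of $V_n$ have not yet been assigned and can be freshly committed to the correct side, which keeps the construction finitary. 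This ensures both $L_0$ and $L_1$ are infinite and hyperimmune, so by Lemma~\ref{lem:hyper-bifamily} the bifamily $\Hcal(L_0, L_1)$ is \hyper.

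Next I would observe that any infinite ascending sequence $G$ of $L$ is contained in $L_0$, and moreover $L_0 = \{x : (\exists y \in G)\, x <_L y\}$ is c.e.\ in $G$, so there is a $G$-computable function $f$ with $f(n) \in L_0$ and $f(n) > n$. Then the biarray defined by $E_n = \{f(n)\}$ and $F_{n,m} = \{m+1\}$ is $G$-computable and fails to meet $\Hcal(L_0, L_1)$, because $\{f(n)\} \not \subseteq \overline{L_0}$. Hence $\Hcal(L_0, L_1)$ is not $G$-\hyper. A symmetric argument with $L_1$ in place of $L_0$ handles infinite descending sequences, so no solution to $L$ preserves \propertyL\ of $\Hcal(L_0, L_1)$.

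The main obstacle is the priority construction in the first step: arranging that both parts of $L$ are simultaneously hyperimmune while keeping $L$ of type $\omega+\omega^*$. Because each requirement $R_e$ only needs to freeze the classification of a single $V_n$ and can wait for $n$ large enough that all elements of $V_n$ are fresh, the usual finite-injury argument suffices and only finitely many commitments are made per requirement. Everything after the construction is a direct computability-theoretic verification using the c.e.-in-$G$ enumeration of the appropriate part of $L$.
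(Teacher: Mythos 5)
Your proof is correct and takes essentially the same route as the paper: fix a computable stable linear order of type $\omega+\omega^*$ with both parts hyperimmune, apply Lemma~\ref{lem:hyper-bifamily} to get that $\Hcal(L_0,L_1)$ is 2-hyperimmune, and then observe that any infinite ascending (resp.\ descending) sequence is a subset of the $\omega$- (resp.\ $\omega^*$-) part, yielding a computable-in-the-solution biarray avoiding $\Hcal(L_0,L_1)$. The only difference is cosmetic: you build the linear order from scratch by a direct waiting argument, whereas the paper fixes any such order with no computable monotone subsequence and cites Lemma~41 of~\cite{Patey2016Partial} for the hyperimmunity of the two parts, and you inline the reverse direction of Lemma~\ref{lem:hyper-bifamily} rather than invoking it.
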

\begin{proof}
Fix any stable linear order of order type $\omega + \omega^{*}$
with no computable infinite ascending or descending subsequence.
Let $A$ and $B$ be the $\omega$ and $\omega^{*}$ part, respectively.
By Lemma~41 in~\cite{Patey2016Partial}, $A$ and $B$ are hyperimmune.
Therefore, by Lemma~\ref{lem:hyper-bifamily}, $\Hcal(A, B)$ is \hyper.
Any infinite ascending or descending sequence $G$ is a subset of $A$
or $B$, respectively. It follows that either $A$, or $B$ is not $G$-hyperimmune,
and by Lemma~\ref{lem:hyper-bifamily}, $\Hcal(A, B)$ is not $G$-\hyper.
It follows that $\sads$ does not preserve \propertyL.
\end{proof}

Whenever a Turing degree contains no $C$-hyperimmune set, it is said to be
\emph{$C$-hyperimmune-free}.
A Turing degree $\mathbf{d}$ is known to be $C$-hyperimmune-free if and only if
every function bounded by $\mathbf{d}$ is dominated by a $C$-computable function
(see Theorem III.3.8 in~\cite{Odifreddi1992Classical}).

\begin{lemma}\label{lem:hif-double-immune}
Let $\Hcal$ be a $C$-\hyper\ bifamily and $G$ be of $C$-hyperimmune-free degree.
Then $\Hcal$ is $C \oplus G$-\hyper.
\end{lemma}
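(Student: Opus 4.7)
The plan is to use the domination characterization of $C$-hyperimmune-freeness to replace any $(C\oplus G)$-computable biarray by a $C$-computable one that pointwise contains it, and then to combine $C$-\propertyL\ of $\Hcal$ with the downward closure of $\Hcal$ under the product subset relation.

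First I would fix a $(C\oplus G)$-computable biarray $(\vec E,\vec F)$ and consider its bounding functions $b_1(n)=\max E_n$ and $b_2(n,m)=\max F_{n,m}$ (with $\max\emptyset$ interpreted as $0$). Both are $(C\oplus G)$-computable. Since $G$ is of $C$-hyperimmune-free degree, each of $b_1$ and $b_2$ is dominated by a $C$-computable function. Absorbing the finitely many exceptions into a single additive constant (the standard trick that turns eventual domination into global domination), I may assume there exist $C$-computable $B_1,B_2$ with $B_1(n)\ge b_1(n)$ and $B_2(n,m)\ge b_2(n,m)$ for every $n,m$.

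Next I would define a $C$-computable biarray $(\vec{\hat E},\vec{\hat F})$ by $\hat E_n=[n+1,B_1(n)]$ and $\hat F_{n,m}=[m+1,B_2(n,m)]$, with the convention that these are set to the singleton $\{n+1\}$ or $\{m+1\}$ if the upper endpoint would be too small. The conditions $\hat E_n>n$ and $\hat F_{n,m}>m$ clearly hold. Since $E_n>n$ and $\max E_n\le B_1(n)$, one has $E_n\subseteq\hat E_n$, and analogously $F_{n,m}\subseteq\hat F_{n,m}$. The assumption that $\Hcal$ is $C$-\hyper\ now yields indices $n^*,m^*$ with $(\hat E_{n^*},\hat F_{n^*,m^*})\in\Hcal$, and the downward closure of $\Hcal$ forces $(E_{n^*},F_{n^*,m^*})\in\Hcal$. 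Hence $(\vec E,\vec F)$ meets $\Hcal$, establishing that $\Hcal$ is $C\oplus G$-\hyper.

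The only delicate technical point is converting the eventual domination supplied by the hyperimmune-free hypothesis into pointwise domination; once $B_1$ and $B_2$ are globally above $b_1$ and $b_2$, the rest is a one-line application of $C$-\propertyL\ to $(\vec{\hat E},\vec{\hat F})$ followed by downward closure, with no further construction required.
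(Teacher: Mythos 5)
Your proof is correct and takes essentially the same approach as the paper: dominate the $C\oplus G$-computable max function by a $C$-computable one, form the interval biarray that pointwise contains the original, and invoke downward closure of $\Hcal$; the paper phrases this as a contrapositive (from ``not $C\oplus G$-\hyper'' to ``not $C$-\hyper'') whereas you argue directly, which is logically the same. One small quibble: the standard hyperimmune-free characterization (the one the paper cites from Odifreddi) already yields everywhere majorization rather than merely eventual domination, so the ``absorb the finitely many exceptions into a constant'' step you flag as delicate is unnecessary, though harmless.
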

\begin{proof}
Again, assume $C=\emptyset$ since the result relativizes.
Fix a bifamily $\Hcal$, and a set $G$ of hyperimmune-free degree
such that $\Hcal$ is not $ G$-\hyper. We want to show that $\Hcal$ is not \hyper.
Let $(\vec{E}, \vec{F})$ be a $ G$-computable biarray which does not meet $\Hcal$.
In particular, the function $f$ defined by $f(n, m) = \max E_n, F_{n,m}$ is $G$-computable,
and is therefore dominated by a computable function $g$.
Define the computable biarray $(\vec{K}, \vec{L})$ by $K_n = \{n+1, \dots, g(n, 0) \}$
and $L_{n,m} = \{m+1, \dots, g(n, m) \}$. It is easy to see that $E_n \subseteq K_n$
and $F_{n,m} \subseteq L_{n,m}$ for every $n, m \in \omega$. Indeed, $E_n > n$ and $\max E_n \leq f(n, 0) \leq g(n,0)$
so $E_n \subseteq \{n+1, \dots, g(n, 0) \}$. Similarly, $F_{n,m} > m$ and $\max F_{n,m} \leq f(n,m) \leq g(n,m)$,
so $F_{n,m} \subseteq \{m+1, \dots, g(n, m)\}$. Since $(\vec{E}, \vec{F})$ does not meet $\Hcal$,
$(E_n, F_{n,m}) \not \in \Hcal$, and by downward-closure of $\Hcal$
under the subset relation, $(K_n, L_{n,m}) \not \in \Hcal$. It follows that $(\vec{K}, \vec{L})$ does not meet $\Hcal$
and therefore that $\Hcal$ is not \hyper.
\end{proof}

\begin{corollary}\label{cor:wkl-double-immunity}
$\wkl$ preserves \propertyL.
\end{corollary}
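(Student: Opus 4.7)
The plan is to combine Lemma~\ref{lem:hif-double-immune} with the hyperimmune-free basis theorem of Jockusch and Soare. Recall that the latter asserts: every infinite $C$-computable subtree of $2^{<\omega}$ has an infinite path $G$ whose degree is $C$-hyperimmune-free, i.e., every $(C \oplus G)$-computable function is dominated by a $C$-computable function.

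Given a $C$-computable bifamily $\Hcal$ that is $C$-\hyper, and a $C$-computable instance of $\wkl$ (an infinite $C$-computable subtree $T \subseteq 2^{<\omega}$), I would first apply the hyperimmune-free basis theorem relative to $C$ to produce an infinite path $G \in [T]$ of $C$-hyperimmune-free degree. This $G$ is the solution to the $\wkl$-instance $T$. It then remains to verify that $\Hcal$ is $C \oplus G$-\hyper, and this is exactly the content of Lemma~\ref{lem:hif-double-immune} applied to $\Hcal$ and $G$ (in its relativized form). Therefore $\wkl$ preserves \propertyL.

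There is no real obstacle here: both ingredients have already been established. The only mildly delicate point is to note that Lemma~\ref{lem:hif-double-immune} was stated (and proved) for $C = \emptyset$ with the remark that it relativizes; one should simply invoke the relativized version, which requires nothing beyond replacing ``computable'' with ``$C$-computable'' throughout that argument. Once that is in hand, the corollary is immediate.
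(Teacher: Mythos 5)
Your argument is exactly the paper's: apply the hyperimmune-free basis theorem (relativized to $C$) to obtain a path $G \in [T]$ of $C$-hyperimmune-free degree, then invoke Lemma~\ref{lem:hif-double-immune} to conclude that $\Hcal$ remains $C \oplus G$-\hyper. The remark on relativization is correct but unproblematic, just as the paper treats it.
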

\begin{proof}
Let $\Hcal$ be a  \hyper\ family, and let $T$ be an infinite  computable
binary tree. By the hyperimmune-free basis theorem~\cite{Jockusch197201}, there is an infinite
path $P \in [T]$ which is $C$-hyperimmune-free. By Lemma~\ref{lem:hif-double-immune},
$\Hcal$ is $  P$-\hyper.
\end{proof}

Given a bifamily $\Hcal$, let $\Bcal(\Hcal) \subseteq \omega^\omega$
be the closed set of all $X$ such that
for every $m, n \in \omega$, $X(\langle 0,n\rangle)$
and $X(\langle 1,n, m \rangle)$ are finite  sets $E_n > n$ and $F_{n,m} > m$
such that $(E_n, F_{n,m}) \not \in \Hcal$.

\begin{lemma}\label{lem:closed-set-double-immune}
A bifamily $\Hcal$ is $C$-\hyper\ if and only if $\Bcal(\Hcal)$ has no $C$-computable member.
\end{lemma}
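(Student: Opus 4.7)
The proof is essentially an unpacking of the two definitions, so I will lay out the bijection between $C$-computable members of $\Bcal(\Hcal)$ and $C$-computable biarrays missing $\Hcal$, and then read off the equivalence. I expect no real obstacle here; the only minor care needed is to match the coding convention: an $X \in \omega^\omega$ belonging to $\Bcal(\Hcal)$ has its values at the positions $\langle 0, n\rangle$ and $\langle 1, n, m\rangle$ interpreted as canonical indices of finite sets with the required min-growth, while values at other positions are irrelevant (one can set them to $0$).

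For the forward direction, I will prove the contrapositive. Suppose $\Bcal(\Hcal)$ has a $C$-computable member $X$. Define $E_n$ to be the finite set coded by $X(\langle 0,n\rangle)$ and $F_{n,m}$ the finite set coded by $X(\langle 1,n,m\rangle)$. Since $X \in \Bcal(\Hcal)$, we have $E_n > n$, $F_{n,m} > m$, and $(E_n, F_{n,m}) \notin \Hcal$ for every $n, m$. The maps $n \mapsto E_n$ and $(n,m) \mapsto F_{n,m}$ are $C$-computable because $X$ is, so $(\vec{E}, \vec{F})$ is a $C$-computable biarray that does not meet $\Hcal$. Hence $\Hcal$ is not $C$-\hyper.

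For the backward direction, again by contraposition, suppose $\Hcal$ is not $C$-\hyper, and let $(\vec{E}, \vec{F})$ be a $C$-computable biarray that does not meet $\Hcal$. Define $X \in \omega^\omega$ by setting $X(\langle 0, n\rangle)$ to the canonical index of $E_n$, $X(\langle 1, n, m\rangle)$ to the canonical index of $F_{n,m}$, and $X(k) = 0$ for all other $k$. Then $X$ is $C$-computable, and by construction its values at the relevant positions code finite sets $E_n > n$ and $F_{n,m} > m$ with $(E_n, F_{n,m}) \notin \Hcal$, so $X \in \Bcal(\Hcal)$. Thus $\Bcal(\Hcal)$ has a $C$-computable member.

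Combining both directions yields the claimed equivalence. (The assertion that $\Bcal(\Hcal)$ is a closed subset of $\omega^\omega$ is automatic, since membership is determined by the values of $X$ at single positions via a countable intersection of clopen conditions — each condition being that a prescribed coordinate codes a finite set of the required form and that the induced pair is not in $\Hcal$, a condition preserved under equality at finitely many coordinates.)
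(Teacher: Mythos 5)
Your proof is correct and takes essentially the same approach as the paper's, which simply notes that the members of $\Bcal(\Hcal)$ are precisely the biarrays that fail to meet $\Hcal$ and reads off the equivalence; you have just unpacked the coding details more explicitly.
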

\begin{proof}
The members of $\Bcal(\Hcal)$ are precisely the biarrays which fail meeting $\Hcal$.
The equivalence follows immediatly.
\end{proof}




\begin{corollary}\label{cor:coh-preserves-double-immunity}
$\coh$ preserves \propertyL.
\end{corollary}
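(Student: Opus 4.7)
The plan is to run a Mathias forcing argument with reservoirs kept $C$-computable throughout, so that biarrays extracted from forcing witnesses are automatically $C$-computable. Fix the $C$-computable sequence $\vec{R} = (R_i)_{i \in \omega}$ to which $\coh$ is being applied, and assume without loss of generality that $\Hcal$ is $C$-\hyper. I would build a descending sequence of Mathias conditions $(F_0, X_0) \geq (F_1, X_1) \geq \cdots$ in which each reservoir $X_s$ is infinite and $C$-computable. Cohesiveness of $G = \bigcup_s F_s$ would be arranged in the standard way by periodically replacing the reservoir $X$ by $X \cap R_s$ or $X \cap \overline{R_s}$, choosing (non-effectively) whichever is infinite; both candidates are $C$-computable, so the new reservoir stays $C$-computable. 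Standard density arguments ensure that $G$ is infinite.

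The heart of the proof is a forcing lemma: for every condition $(F, X)$ with $X$ a $C$-computable infinite set and every Turing functional $\Phi_e$, there is an extension $(F', X')$, with $X'$ still $C$-computable, forcing either that $\Phi_e^{C \oplus G}$ fails to be a valid biarray, or that it computes a biarray meeting $\Hcal$. The proof splits on whether valid witnesses exist inside $X$. In Case~(a), some input $n$ (respectively $(n,m)$) admits no finite $\sigma \subseteq X$ with $\sigma > F$ making $\Phi_e^{C \oplus (F \cup \sigma)}(n;1)$ (resp.\ $\Phi_e^{C \oplus (F \cup \sigma)}(n,m;2)$) converge to a valid value $E_n > n$ (resp.\ $F_{n,m} > m$). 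By the use convention adopted in the paper, the current condition $(F, X)$ already forces $\Phi_e^{C \oplus G}$ to diverge or to be invalid at that input, so no extension is needed. In Case~(b), valid witnesses $\sigma_n, \tau_{n,m} \subseteq X$ exist for every $n$ and $(n,m)$; since $X$ is $C$-computable, such least witnesses can be located $C$-computably, producing a $C$-computable biarray $(\widetilde{\vec{E}}, \widetilde{\vec{F}})$ with $\widetilde{E}_n = \Phi_e^{C \oplus (F \cup \sigma_n)}(n;1)$ and $\widetilde{F}_{n,m} = \Phi_e^{C \oplus (F \cup \tau_{n,m})}(n,m;2)$. The $C$-\propertyL\ of $\Hcal$ then supplies $n_0, m_0$ with $(\widetilde{E}_{n_0}, \widetilde{F}_{n_0,m_0}) \in \Hcal$, and the condition $(F \cup \sigma_{n_0} \cup \tau_{n_0,m_0},\, X \cap (\max(\sigma_{n_0} \cup \tau_{n_0,m_0}), \infty))$ forces $\Phi_e^{C \oplus G}$ to compute a biarray that meets $\Hcal$ at $(n_0, m_0)$.

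The main subtlety I expect to navigate is maintaining $C$-computability of the reservoirs throughout the construction, because the biarray extracted in Case~(b) must itself be $C$-computable, not merely $(C \oplus X)$-computable, in order to invoke the $C$-\propertyL\ hypothesis on $\Hcal$. This is handled by the observation that $X \cap R_s$ and $X \cap \overline{R_s}$ are both $C$-computable and only a non-effective existence argument is needed to select the infinite branch of the cohesive tower. Meeting the countably many forcing requirements indexed by $e$ alongside the density requirements for infiniteness of $G$ and for cohesiveness then yields an $\vec{R}$-cohesive solution $G$ such that $\Hcal$ remains $(C \oplus G)$-\hyper, establishing the corollary.
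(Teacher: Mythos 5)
Your approach is genuinely different from the paper's. The paper first recasts \propertyL\ of $\Hcal$ as the statement that the closed set $\Bcal(\Hcal) \subseteq \omega^\omega$ has no $C$-computable member (Lemma~\ref{lem:closed-set-double-immune}), and then cites a ready-made preservation theorem for $\coh$ due to the second author (\cite[Corollary 2.9]{PateyCombinatorial}), which says $\coh$ preserves the property ``this closed set has no computable member.'' Your proof unfolds that black box and runs the cohesive Mathias forcing directly, which is essentially how one would prove the cited theorem. Both routes are legitimate: the paper's is shorter because the machinery already exists, while yours is self-contained, and your observation that both $X \cap R_s$ and $X \cap \overline{R_s}$ remain $C$-computable is exactly the right reason the extracted biarray is $C$-computable rather than merely $(C\oplus G)$-computable.

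There is, however, one technical gap in your Case~(b). You locate independent least witnesses $\sigma_n$ and $\tau_{n,m}$ for the two slots of the biarray and then pass to the condition $(F \cup \sigma_{n_0} \cup \tau_{n_0,m_0},\ X \cap (\max(\sigma_{n_0} \cup \tau_{n_0,m_0}), \infty))$. But the use convention adopted in the paper only guarantees $\Phi^{F\cup H}(n) = \Phi^F(n)$ when $\Phi^F(n)\downarrow$ and $H > F$. Since $\tau_{n_0,m_0}$ need not lie entirely above $\sigma_{n_0}$ (nor conversely), the union $F\cup\sigma_{n_0}\cup\tau_{n_0,m_0}$ is not of the form ``$F\cup\sigma_{n_0}$ plus something above it,'' and you cannot conclude $\Phi_e^{C\oplus(F\cup\sigma_{n_0}\cup\tau_{n_0,m_0})}(n_0;1) = \widetilde E_{n_0}$, nor the analogous equality at $(n_0,m_0;2)$. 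The fix is standard, and is the one used in the analogous places in the paper (see the definitions of $U_n$ and $V_{n,m}$ in Lemma~\ref{lem:mathias-double-immunity}): make the second search nested, i.e.\ after finding $\sigma_n$, search for $\tau_{n,m}\supseteq\sigma_n$ with $\tau_{n,m}\setminus\sigma_n > F\cup\sigma_n$ and both $\Phi_e^{C\oplus(F\cup\tau_{n,m})}(n;1)\downarrow$ and $\Phi_e^{C\oplus(F\cup\tau_{n,m})}(n,m;2)\downarrow$. Then the convention preserves the first computation under $\tau_{n,m}$, the biarray is still $C$-computable, and the condition $(F\cup\tau_{n_0,m_0},\ X\cap(\max\tau_{n_0,m_0},\infty))$ does force the requirement. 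With that adjustment the argument is correct.
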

\begin{proof}
Let $\Hcal$ be a  \hyper\ family, and let $\vec{R} = R_0, R_1, \dots$ be an infinite computable
sequence
\footnote{By a computable sequence of sets $R_0,R_1,\cdots$ we mean
$R_n$ is uniformly computable.} of sets. By Lemma~\ref{lem:closed-set-double-immune}, $\Bcal(\Hcal)$
has no  computable member. By~\cite[Corollary 2.9]{PateyCombinatorial},
there is an $\vec{R}$-cohesive set~$G$ such that $\Bcal(\Hcal)$ has no $ G$-computable member.
By Lemma~\ref{lem:closed-set-double-immune}, $\Hcal$ is $  G$-\hyper.
\end{proof}

\subsection{$\sts^2_3$ and $\semo$ do not preserve \propertyL}
Before proving that $\sts^2_3$ and $\semo$ do not preserve \propertyL, we must first introduce some notation.

Given a stable coloring $f : [\omega]^2 \to 3$ and two sets $E < F$,
we write $E \to_i F$ for $(\forall x \in E)(\forall y \in F)f(x, y) = i$.
For every $i< 3$, we let $C_i(f) = \{ x : (\forall^{\infty} y) f(x, y) = i \}$.
Finally, given a stable coloring $f : [\omega]^2 \to 3$, we let $\Hcal(f)$
be the bifamily of all pairs $(E, F)$ such that $E < F$,
$E \subseteq C_1(f)$, $F \subseteq C_2(f)$, and $E \to_0 F$.

\begin{proposition}\label{thm:sts23-priority}
There is a stable computable coloring $f : [\omega]^2 \to 3$ such that $\Hcal(f)$ is \hyper.
\end{proposition}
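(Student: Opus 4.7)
The plan is a standard finite-injury priority construction of a stable computable $f : [\omega]^2 \to 3$ meeting, for each Turing index $e$, the requirement
\[
R_e : \Phi_e \text{ codes a total biarray } (\vec E^e, \vec F^e) \implies (E_n^e, F_{n,m}^e) \in \Hcal(f) \text{ for some } n, m.
\]
I build $f$ stage by stage, setting $f(x, s)$ for all $x < s$ at stage $s$, so the construction is uniformly computable.

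The strategy for $R_e$ acts in two phases. In Phase~1 it searches for an $n$ above every integer currently committed by higher-priority strategies, with $\Phi_e(n;1){\downarrow} = E_n^e$; because $E_n^e > n$, the set $E_n^e$ automatically avoids all prior commitments, and $R_e$ \emph{commits} every element of $E_n^e$ to $C_1(f)$, meaning that at every subsequent stage $s'$ we set $f(x, s') := 1$ for $x \in E_n^e$ by default. In Phase~2, after refreshing its threshold, $R_e$ waits for a large $m$ with $\Phi_e(n, m; 2){\downarrow} = F_{n,m}^e$; picking $m$ beyond $\max E_n^e$ forces $E_n^e < F_{n,m}^e$, and $R_e$ commits $F_{n,m}^e$ to $C_2(f)$ and records the crossing override $f(x, y) := 0$ for every $x \in E_n^e$, $y \in F_{n,m}^e$. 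All commitments target integers strictly above the current stage, so no previously defined $f$-value is disturbed. Strategies are processed in priority order; when some $R_{e'}$ makes a new commitment, each $R_e$ with $e > e'$ is injured and restarts its search with a fresh, larger threshold. Since each $R_{e'}$ commits at most twice in its lifetime, every $R_e$ is injured only finitely often, and after its last injury both search phases terminate whenever $\Phi_e$ is total.

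The final coloring is defined uniformly from the bookkeeping by the rule: $f(x, s) = 0$ if $(x, s)$ lies in a recorded crossing override; else $f(x, s) = 2$ if $x$ is committed to $C_2(f)$; else $f(x, s) = 1$. Each $x$ ends up permanently committed to at most one class and appears in at most one crossing override, so every column $f(x, \cdot)$ is eventually constant, and $f$ is stable. The main obstacle is the careful but routine priority bookkeeping: verifying that injuring a lower-priority strategy never destabilizes columns already in play (guaranteed because commitments are always to integers strictly above the current stage, so retractions only affect future values), and that the Phase~2 search terminates whenever $\Phi_e$ is total (guaranteed because $F_{n,m}^e > m$ lets us dodge any finite set of previously committed integers by taking $m$ large enough).
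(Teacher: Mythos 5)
There is a genuine gap in Phase~2. You choose $m$ (beyond $\max E_n^e$ and, presumably, beyond the stage $s_0$ at which you refresh the threshold) and then wait for $\Phi_e(n,m;2)$ to converge, say at stage $s_1 > s_0$. The biarray definition only guarantees $F_{n,m}^e > m$; it does not guarantee $F_{n,m}^e > s_1$. Since the computation of $\Phi_e(n,m;2)$ may take arbitrarily long relative to $m$, the set $F_{n,m}^e$ can contain elements $y$ with $m < y < s_1$. For such a $y$ the value $f(x,y)$ was already fixed at stage $y < s_1$, before the override existed, and --- because you committed $E_n^e$ to $C_1(f)$ back in Phase~1 --- it was set to $1$, not $0$. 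Thus $E_n^e \to_0 F_{n,m}^e$ fails, so $(E_n^e, F_{n,m}^e) \notin \Hcal(f)$ and the requirement is not met. The assertion that ``all commitments target integers strictly above the current stage'' is true for $F_{n,m}^e$ only relative to the moment $m$ was chosen, not the later moment the override is actually recorded.

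You also cannot repair this by insisting that $F_{n,m}^e$ lie above the convergence stage and re-trying other $m$: a computable biarray may output $F_{n,m} = \{m+1\}$ via a (padded) computation taking $2m$ steps, so the convergence stage always exceeds $\min F_{n,m}$ and Phase~2 never terminates. The paper avoids the problem by reversing the order of the color commitments. Its first attention commits the found set to $C_0(f)$ --- and $0$ is also the default color --- so $f(x,y)=0$ holds automatically, with no override, for all $x$ in that set and all future $y$. The second attention then waits for sets $E,F \subseteq \{e+1,\dots,s-1\}$, both fully below the current stage $s$, with $E \to_0 F$ \emph{already true}, and only then commits $E$ to $C_1(f)$ and $F$ to $C_2(f)$; this last step changes only $f$-values in columns $\geq s$, so the already-verified crossing is untouched. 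To repair your construction, adopt that ordering: make $0$ the default, commit $E_n^e$ to $C_0(f)$ in Phase~1, and postpone the $C_1(f)$ commitment of $E_n^e$ until Phase~2, at which point the crossing into $F_{n,m}^e$ is already $0$.
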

\begin{proof}
We build the coloring $f : [\omega]^2 \to 3$ by a finite injury priority argument.
For every $e \in \omega$, we want to satisfy the following requirement:
\begin{align}
\Rcal_e: &\text{If $\Phi_e$ is total, then there is some $n, m \in \omega$
such that}\\ \nonumber
&\text{$\Phi_e(n;1) \subseteq C_1(f)$, $\Phi_e(n,m;2) \subseteq C_2(f)$
and $\Phi_e(n;1) \to_0 \Phi_e(n,m;2)$.}
\end{align}
The requirements are given the usual priority ordering $\Rcal_0 < \Rcal_1 < \dots$
Initially, the requirements are neither partially, nor fully satisfied.
\begin{itemize}
	\item[(i)] A requirement $\Rcal_e$ \emph{requires a first attention at stage~$s$}
	if it is not partially satisfied and $\Phi_{e,s}(n;1) \downarrow = E$ for some set $E \subseteq \{e+1, \dots, s-1\}$
	such that no element in $E$ is restrained by a requirement of higher priority.
	If it receives attention, then it puts a restrain on $E$,
	commit the elements of $E$ to be in $C_0(f)$, and is declared \emph{partially satisfied}.

	\item[(ii)] A requirement $\Rcal_e$ \emph{requires a second attention at stage~$s$}
	if it is not fully satisfied, and $\Phi_{e,s}(n;1) \downarrow = E$ and $\Phi_{e,s}(n,m;2) \downarrow = F$
	for some sets $E, F \subseteq \{e+1, \dots, s-1\}$ such that $E \to_0 F$ and which are not restrained
	by a requirement of higher priority. If it receives attention,
	then it puts a restrain on $E \cup F$, commit the elements of $E$ to be in $C_1(f)$,
	the elements of $F$ to be in $C_2(f)$, and is declared \emph{fully satisfied}.
\end{itemize}
At stage~0, we let $f = \emptyset$.
Suppose that at stage $s$, we have defined $f(x, y)$ for every $x < y < s$.
For every $x < s$, if it is committed to be in some $C_i(f)$, set $f(x, s) = i$,
and otherwise set $f(x, s) = 0$.
Let $\Rcal_e$ be the requirement of highest priority which requires attention.
If $\Rcal_e$ requires a second attention,
then execute the second procedure, otherwise execute the first one.
In any case, reset all the requirements of lower priorities by setting them unsatisfied,
releasing all their restrains, and go to the next stage.
This completes the construction.
On easily sees by induction that each requirement acts finitely often, and is eventually
fully satisfied. This procedure also yields a stable coloring.
\end{proof}

\begin{corollary}
$\sts^2_3$ does not preserve \propertyL.
\end{corollary}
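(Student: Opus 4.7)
The plan is to take $f$ to be the stable computable coloring produced by Proposition~\ref{thm:sts23-priority}, so that $\Hcal(f)$ is 2-hyperimmune. Since $f$ is a computable $\sts^2_3$-instance, it suffices to show that for every infinite $f$-thin set $G$, $\Hcal(f)$ fails to be $G$-2-hyperimmune, i.e., there is a $G$-computable biarray that does not meet $\Hcal(f)$.

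By thinness, $G$ avoids some color $i \in \{0,1,2\}$. My biarray will consist entirely of singletons drawn from $G$. Set $E_n = \{g_n\}$ where $g_n$ is the least element of $G$ strictly above $n$, and $F_{n,m} = \{h_{n,m}\}$ where $h_{n,m}$ is the least element of $G$ strictly above $\max(m,g_n)$. This is $G$-computable and satisfies $E_n > n$, $F_{n,m} > m$, and $E_n < F_{n,m}$ for all $n,m$. I then argue by cases that $(E_n, F_{n,m}) \notin \Hcal(f)$. If $i = 0$, then $f(g_n, h_{n,m}) \neq 0$ since both lie in $G$, so $E_n \not\to_0 F_{n,m}$. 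If $i = 1$, no element of $G$ can lie in $C_1(f)$: any such element would receive color $1$ from cofinitely many larger integers, hence from infinitely many elements of the infinite set $G$, contradicting that $G$ avoids color $1$; thus $E_n \not\subseteq C_1(f)$. The case $i = 2$ is symmetric and gives $F_{n,m} \not\subseteq C_2(f)$. In each case, one of the four conjuncts defining membership in $\Hcal(f)$ fails.

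I do not anticipate any serious obstacle at this step. The real work has already been carried out in Proposition~\ref{thm:sts23-priority}, which arranged $f$ so that $C_1(f)$, $C_2(f)$, and the $0$-edge relation between them line up one-to-one with the three colors a thin set might avoid. Once that matching is in place, the present argument is a routine unpacking of the definition of $\Hcal(f)$ together with the observation that membership of an element of $G$ in $C_i(f)$ forces cofinitely many $f$-edges from it to have color $i$.
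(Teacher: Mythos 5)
Your proposal is correct and follows essentially the same approach as the paper: take the coloring $f$ from Proposition~\ref{thm:sts23-priority}, build a $G$-computable biarray of singletons drawn from $G$, and observe that whichever color $G$ avoids forces one of the three defining conditions of $\Hcal(f)$ to fail. Your explicit choice of $g_n$ and $h_{n,m}$ is a slightly more careful version of the paper's $E_n = \{x_n\}$, $F_{n,m} = \{x_{n+m}\}$ (it cleanly guarantees $E_n > n$, $F_{n,m} > m$, and $E_n < F_{n,m}$ even for $m = 0$), but the underlying argument is the same.
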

\begin{proof}
Let $f$ be the stable computable coloring of Proposition \ref{thm:sts23-priority}.
Let $G = \{ x_0 < x_1 < \dots \}$ be an infinite $f$-thin set.
We claim that $\Hcal(f)$ is not $G$-\hyper.
 Indeed, let $(\vec{E}, \vec{F})$ be the $G$-computable biarray
defined by $E_n = \{x_n\}$ and $F_{n,m} = \{x_{n+m}\}$. Fix any $n$. Suppose that $E_n \subseteq C_1(f)$
and $F_{n,m} \subseteq C_2(f)$ (if such $E_n,F_{n,m}$ does not exist then we are done).
In other words, for every sufficiently large $k$, $f(x_n, x_k) = 1$
and $f(x_{n+m}, x_k) = 2$. It follows that $G$ must be $f$-thin for color~0
\footnote{Given $f:[\omega]^n\rightarrow\omega$,
a set $G$ is $f$-thin for color $i $ iff $i\notin f[G]^n$.}, therefore $E_n \not \to_0 F_{n,m}$
and so $(E_n, F_{n,m}) \not \in \Hcal(f)$. The $G$-computable biarray $(\vec{E}, \vec{F})$ does not meet $\Hcal(f)$,
so $\Hcal(f)$ is not $G$-\hyper.
\end{proof}

\begin{corollary}
$\semo$ does not preserve \propertyL.
\end{corollary}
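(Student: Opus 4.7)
My plan is to imitate the template of Proposition~\ref{thm:sts23-priority} and its corollary for $\sts^2_3$. For a stable tournament $T$, set $U(T) = \{x : \lim_s T(x,s) = 1\}$ and $D(T) = \{x : \lim_s T(x,s) = 0\}$, and let $\Hcal(T)$ be the bifamily of all pairs $(E, F)$ of finite sets with $E < F$, $E \subseteq U(T)$, $F \subseteq D(T)$, and $T(y, x) = 1$ for every $x \in E$ and $y \in F$. The proof then splits into (i) constructing a computable stable tournament $T$ such that $\Hcal(T)$ is \hyper{}, and (ii) showing that every infinite $T$-transitive subtournament $G$ makes $\Hcal(T)$ fail to be $G$-\hyper{}.

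For step (i), I would run a finite-injury priority construction directly parallel to Proposition~\ref{thm:sts23-priority}, with requirement $\Rcal_e$ asking that, whenever $\Phi_e$ codes a total biarray, some pair $(\Phi_e(n;1), \Phi_e(n,m;2))$ lies in $\Hcal(T)$. A first attention, triggered when $\Phi_{e,s}(n;1) \downarrow = E \subseteq \{e+1,\dots,s-1\}$ disjoint from higher-priority restraints, commits every $x \in E$ to $U(T)$ by defaulting $T(x, t) = 1$ at all later stages $t$. A second attention, triggered additionally by $\Phi_{e,s}(n, m; 2) \downarrow = F \subseteq \{e+1,\dots,s-1\}$ with $F > E$ and $F$ disjoint from higher-priority restraints, commits $F$ to $D(T)$ and simultaneously overrides the already-decided edges between $E$ and $F$ so that $T(y,x) = 1$ for $x \in E$, $y \in F$. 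Uncommitted elements default to $U(T)$, and the injury discipline is exactly as in Proposition~\ref{thm:sts23-priority}.

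For step (ii), let $G = \{g_0 < g_1 < \cdots\}$ be any infinite $T$-transitive subtournament and consider the $G$-computable biarray with $E_n = \{g_n\}$ and $F_{n,m} = \{g_{n+m+1}\}$. Assume toward contradiction that $(E_n, F_{n,m}) \in \Hcal(T)$ for some $n, m$; then $g_n \in U(T)$, $g_{n+m+1} \in D(T)$, and $T(g_{n+m+1}, g_n) = 1$. Choose $k > n+m+1$ large enough that $T(g_n, g_k) = 1$ (possible since $g_n \in U(T)$) and $T(g_k, g_{n+m+1}) = 1$ (possible since $g_{n+m+1} \in D(T)$). The triple $\{g_n, g_k, g_{n+m+1}\} \subseteq G$ then forms a 3-cycle in $T$, contradicting the $T$-transitivity of $G$. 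Hence the biarray does not meet $\Hcal(T)$, so $\Hcal(T)$ is not $G$-\hyper{}.

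The main obstacle is to check that the priority bookkeeping of step (i) is internally consistent: the finitely many edge overrides imposed at the second attentions must not undo the $U$- or $D$-commitments made earlier either by the same requirement or by higher-priority ones. This is handled exactly as in Proposition~\ref{thm:sts23-priority}: membership of $x$ in $U(T)$ or $D(T)$ is a $\Sigma^0_2$ property of $T$ that tolerates finitely many counterexamples, and the restraints together with the reset rule ensure that each requirement is injured only finitely often and eventually receives both attentions.
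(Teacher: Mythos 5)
Your step (ii) is correct and is essentially the same argument as the paper's, just phrased directly in tournament terms: exhibiting a $3$-cycle in $G$ from the hypothesis that $(E_n, F_{n,m}) \in \Hcal(T)$, contradicting transitivity. Your $\Hcal(T)$ is also exactly the translation of the paper's $\Hcal(f)$ under the identification $T(x,y) \Leftrightarrow f(x,y)=1$ for $x<y$: $C_1(f)$ becomes $U(T)$, $C_2(f) \subseteq D(T)$, and $E \to_0 F$ becomes ``$F$ beats $E$''.

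The problem is in step (i), and it is a genuine gap, not a bookkeeping detail. Your first attention commits every $x \in E$ to $U(T)$, meaning $T(x,t)=1$ for all later stages $t$. Any $F$ discovered at a later second-attention stage $s'$, and in particular every $F_{n,m}$ with $m$ large, will consist of elements $y$ with $s < y < s'$, and for those the edge $T(x,y)$ has \emph{already been decided to be $1$} at stage $y$ by the very commitment you made at first attention. You then propose to ``override the already-decided edges between $E$ and $F$ so that $T(y,x)=1$.'' That is not a move available in this kind of construction: the tournament is defined stage by stage, with $T(x,y)$ fixed once and for all at stage $y$, and revising it later would make $T$ depend on the future of the construction, hence non-computable. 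There is no membership-in-a-limit-set slack here — these are finitely many \emph{particular} edges, not the $\Sigma^0_2$ tail condition, so the ``finitely many counterexamples'' excuse does not apply. Consequently your second attention can never succeed for the $F$'s you need, and the strategy $\Rcal_e$ is never fully satisfied.

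The fix is to mirror the paper's $\sts^2_3$ construction more faithfully: at first attention, commit $E$ to the \emph{default} limit behaviour (in the paper, $C_0(f)$; in tournament language, $D(T)$, i.e.\ later elements beat $E$). Then any $F$ found later automatically satisfies ``$F$ beats $E$''. At second attention, \emph{then} switch $E$ into $U(T)$ and commit $F$ into $D(T)$; this changes only future edges, and the already-decided backward edges from $F$ to $E$ are retained. (The paper sidesteps the whole issue by not building a new tournament at all: it reuses the coloring $f$ of Proposition~\ref{thm:sts23-priority}, sets $T(x,y) \Leftrightarrow f(x,y)=1$, and works with the same bifamily $\Hcal(f)$, so that a single priority construction serves both the $\sts^2_3$ and $\semo$ corollaries.) Once step (i) is repaired along these lines, your step (ii) goes through unchanged.
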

\begin{proof}
  Let $f : [\omega]^2\rightarrow 3$ be the stable computable coloring of Proposition \ref{thm:sts23-priority}.
Let $T$ be the stable computable tournament defined for every $x < y$ by $T(x, y)$ iff $f(x, y) = 1$.
Let $G = \{ x_0 < x_1 < \dots \}$ be an infinite transitive subtournament.
We claim that $\Hcal(f)$ is not $G$-\hyper. Indeed, let $(\vec{E}, \vec{F})$ be the $G$-computable biarray
defined by $E_n = \{x_n\}$ and $F_{n,m} = \{x_{n+m}\}$. Fix $n$. Suppose that $E_n \subseteq C_1(f)$
and $F_{n,m} \subseteq C_2(f)$ (if such $E_n,F_{n,m}$ does not exist then we are done). In other words, for every sufficiently large $k$, $f(x_n, x_k) = 1$
and $f(x_{n+m}, x_k) = 2$, so $T(x_n, x_k)$ and $T(x_k, x_{n+m})$ will hold.
By transitivity of $G$, $T(x_n, x_{n+m})$ must hold, so $f(x_n, x_{n+m}) = 1$. It follows that $E_n \not \to_0 F_{n,m}$
and so $(E_n, F_{n,m}) \not \in \Hcal(f)$. The $G$-computable biarray $(\vec{E}, \vec{F})$ does not meet $\Hcal(f)$,
so $\Hcal(f)$ is not $G$-\hyper.
\end{proof}

\subsection{$\ts^2_4$ preserves \propertyL}\label{subsect:ts24-preserves}
The purpose of this section is to prove the following theorem.

\begin{theorem}\label{thm:ts24-preserves-double-immunity}
$\ts^2_4$ preserves \propertyL.
\end{theorem}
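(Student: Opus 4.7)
The plan is to build the $f$-thin set $G$ by Mathias forcing. Conditions are pairs $(F,X)$ in which $F$ is a finite $f$-thin set (with a tracked ``avoided'' color that the condition commits to), $F<X$, $X$ is infinite, and $\Hcal$ remains $C\oplus X$-\hyper\ when restricted to biarrays drawn from $X$. Density arguments guarantee that the generic $G$ is infinite and, via the tracked color along the generic filter, that $G$ is $f$-thin. The substantive requirements are, for each Turing functional $\Phi_e$: either $\Phi_e^{C\oplus G}$ fails to be a total biarray, or the biarray it computes meets $\Hcal$.

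The main forcing lemma I would prove is that every condition $(F,X)$ has an extension satisfying $\Rcal_e$. Suppose, toward a contradiction, that no extension forces divergence of some $\Phi_e^{C\oplus G}(n;1)$ or $\Phi_e^{C\oplus G}(n,m;2)$. Then I can search $C\oplus X$-computably, for each $n$, for a finite thin extension of $F$ inside $X$ witnessing $\Phi_e(n;1)\downarrow$, reading off a set $E_n$, and similarly for each $m$ a further compatible extension witnessing $\Phi_e(n,m;2)\downarrow$ giving $F_{n,m}$. The pair $(\vec E,\vec F)$ is then a $C\oplus X$-computable biarray, so by \propertyL\ of $\Hcal$ relative to $C\oplus X$ some $(E_n,F_{n,m})\in\Hcal$; gluing the two witnessing finite extensions onto $F$ yields the desired extension of the condition.

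The delicate point is that the finite extensions produced for different values of $n$ or $m$ need not be simultaneously $f$-thin for a single color, so they cannot be naively amalgamated into the condition. The count $4=2\times 2$ matches the two nested search levels of a biarray and provides exactly the slack needed: one preprocesses $X$ using the already-established preservations for $\coh$ and pigeonhole so that every $x\in X$ has a stabilized four-valued color pattern relative to $X$, splitting $X$ into four classes. Then one can designate two of these classes to absorb the outer search producing $E_n$ and the remaining two to absorb the inner search producing $F_{n,m}$, keeping a consistent avoided color on the union across both levels.

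The main obstacle I anticipate is interleaving two tasks during the extension step: thinning $X$ to keep this four-color bookkeeping tractable, while simultaneously preserving $C\oplus X$-\propertyL\ of $\Hcal$ through each thinning. This reservoir-shrinking argument, matching thinnings at the $n$-level and the $m$-level against the bifamily structure, is the technical heart of the proof; once in place, the extraction of a biarray from failure of forcing and the standard Mathias-generic construction combine to give an $f$-thin $G$ with $\Hcal$ still $C\oplus G$-\hyper, completing the proof.
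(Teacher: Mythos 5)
Your high-level architecture---Mathias forcing, reading off a $C\oplus X$-computable biarray from the search for convergent extensions, applying \propertyL\ of $\Hcal$ relative to $C\oplus X$, and gluing the witnessing extension back into the condition---correctly mirrors the shape of the paper's argument, but there is a genuine gap precisely at the ``delicate point'' you flag, and the proposed fix does not close it.

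After the cohesiveness preprocessing you describe, what you are really left with is the \emph{strong} preservation question for $\ts^1_4$: the induced limit coloring $c(x)=\lim_{y\in X}f(x,y)$ is only $\Delta^{0,C\oplus X}_2$, not $C\oplus X$-computable. A condition that commits to a single avoided color $c$ cannot even guarantee extendibility (you need $c$ to be eventually avoided by the reservoir, which is a non-computable property of the unknown $\Delta^0_2$ coloring), and the $C\oplus X$-computable search you run for $E_n$ and $F_{n,m}$ cannot consult $c$. Designating ``two classes for the outer search and two for the inner search'' does not resolve this, because the inner extension must literally extend the outer one inside the same thin set, so they cannot be drawn from disjoint designated classes while keeping a consistent avoided color. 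The $4=2\times 2$ accounting is also not the paper's: the role of $4$ is that in the Seetapun-style Case~1 one loses one color to the $U_n$ requirement (which must succeed for \emph{two} hypothetical colorings simultaneously) and then must find, among the remaining three colors, one avoided by both colorings of a pair $(g,h)$ chosen by $\wkl$-preservation from a $\Pi^0_1$ class; since two colorings can block at most two colors, three suffice, and this is exactly where $\ts^1_3$ fails (the paper's footnote to Lemma~\ref{lem:ts14-strong-preserve-double-immunity-force-Q} spells this out).

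The paper's resolution, which your proposal is missing, is twofold. First, the color conflict is handled by \emph{not} committing to a single color: one builds four candidate sets $G_0,\dots,G_3$ simultaneously, with $G_i$ thin for color $i$, and forces a disjunctive requirement $\Rcal^0_{e_0}\vee\cdots\vee\Rcal^3_{e_3}$, so that at the end at least one of them both is infinite and preserves \propertyL. Second, because the limit coloring is arbitrary, the outputs $U_n$ and $V_{n,m}$ are not read off from a direct search over finite extensions, but are defined via compactness over \emph{all pairs} of colorings $g,h:\omega\to 4$; when this search fails, $\wkl$-preservation of \propertyL\ (Corollary~\ref{cor:wkl-double-immunity}) is invoked on the resulting $\Pi^0_1$ class of colorings to extract a reservoir on which the functional can be forced to diverge. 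Without these two mechanisms---the disjunctive multi-set construction and the compactness/$\wkl$ step---your ``reservoir-shrinking argument'' cannot be completed as stated.
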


This will be generalized to arbitrary tuples in the next section.
The notion of preservation of \propertyL\ for $\ts^{n+1}_k$ relates
to the notion of strong preservation of \propertyL\ for $\ts^n_k$ in the following sense.

\begin{lemma}\label{lem:ts-n-to-np1-strongly-double-immune}
Fix some $n \geq 1$ and $k \geq 2$.
If $\ts^n_k$ strongly preserves \propertyL,
then $\ts^{n+1}_k$ preserves \propertyL.
\end{lemma}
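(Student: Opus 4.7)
The plan is to follow the standard Cholak--Jockusch--Slaman style reduction from $\ts^{n+1}_k$ to $\ts^n_k$, mediated by cohesiveness. Given a $C$-computable coloring $f : [\omega]^{n+1} \to k$ and a bifamily $\Hcal$ that is $C$-\hyper, the goal is to produce an infinite $f$-thin set $H'$ such that $\Hcal$ remains $C \oplus H'$-\hyper. The main difficulty is that the auxiliary instance of $\ts^n_k$ produced by cohesiveness is only $\Delta^0_2$, not computable, in the relevant oracle, which is precisely why the \emph{strong} preservation hypothesis is needed for $\ts^n_k$ rather than ordinary preservation.

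First I would apply the preservation of \propertyL\ by $\coh$ (Corollary~\ref{cor:coh-preserves-double-immunity}, relativized to $C$) to the uniformly $C$-computable sequence $\vec R = \langle R_{\sigma, i} : \sigma \in [\omega]^n,\ i < k \rangle$ defined by $R_{\sigma, i} = \{y > \max \sigma : f(\sigma, y) = i\}$. This yields a $\vec R$-cohesive set $C_0$ such that $\Hcal$ is $C \oplus C_0$-\hyper. By cohesiveness, for each $\sigma \in [\omega]^n$ there is a unique color $\tilde f(\sigma) < k$ with $C_0 \subseteq^* R_{\sigma, \tilde f(\sigma)}$, producing a $\ts^n_k$-instance $\tilde f : [\omega]^n \to k$.

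Next, enumerating $C_0 = \{c_0 < c_1 < \cdots\}$, I would consider the $\ts^n_k$-instance $g : [\omega]^n \to k$ given by $g(s_1, \ldots, s_n) = \tilde f(c_{s_1}, \ldots, c_{s_n})$. By the hypothesis that $\ts^n_k$ strongly preserves \propertyL, there is an infinite $g$-thin set $S$ such that $\Hcal$ is $C \oplus C_0 \oplus S$-\hyper. Setting $H = \{c_s : s \in S\} \subseteq C_0$ gives an infinite $\tilde f$-thin subset of $C_0$; say $\tilde f([H]^n)$ misses color $i < k$.

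Finally, I would extract an infinite $f$-thin $H' \subseteq H$ by a greedy construction: having chosen $h'_0 < \cdots < h'_j$ in $H$, let $h'_{j+1}$ be the least element of $H$ above $h'_j$ such that $f(\sigma, h'_{j+1}) \neq i$ for every $\sigma \in [\{h'_0, \ldots, h'_j\}]^n$. Each such $\sigma$ lies in $H \subseteq C_0$, so $\tilde f(\sigma) \neq i$; hence by cohesiveness $f(\sigma, y) \neq i$ for cofinitely many $y \in C_0$, and in particular for cofinitely many $y \in H$, so the search always terminates. A routine induction shows $f([H']^{n+1})$ misses color $i$, and since $H' \leq_T C \oplus C_0 \oplus S$, the bifamily $\Hcal$ is $C \oplus H'$-\hyper, as required. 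The only delicate point beyond invoking strong preservation is ensuring that the intermediate thin set $H$ lives inside $C_0$, which is what makes the final cohesiveness-based extraction possible.
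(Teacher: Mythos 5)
Your proof is correct and follows essentially the same route as the paper: apply $\coh$-preservation of \propertyL\ to $\vec R$, form the induced limit coloring of $n$-tuples, invoke strong preservation of $\ts^n_k$, and thin out to get an $f$-thin set. You are merely more explicit than the paper about two points that it leaves terse, namely arranging the intermediate thin set to lie inside the cohesive set (the paper implicitly reads $g(\sigma)=\lim_{x\in G}f(\sigma,x)$ as a coloring of $[G]^n$) and spelling out the greedy extraction of the final $f$-thin set.
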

\begin{proof}
Let $\Hcal$ be a  \hyper\ family, and $f : [\omega]^{n+1} \to k$ be a  computable
instance of $\ts^{n+1}_k$. Let $\vec{R} = \langle R_{\sigma,i} : \sigma \in [\omega]^n, i < k \rangle$
be the  computable family of sets defined for every $\sigma \in [\omega]^n$ and $i < k$ by
$$
R_{\sigma,i} = \{ x \in \omega : f(\sigma,x) = i \}
$$
Since $\coh$ preserves \propertyL\ (Corollary~\ref{cor:coh-preserves-double-immunity}), there
is an $\vec{R}$-cohesive set~$G$ such that $\Hcal$ is $G$-\hyper.
Let $g : [\omega]^n \to k$ be the $\Delta^{0,G}_2$ instance of $\ts^n_k$ defined for every $\sigma \in [\omega]^n$ by
$$
g(\sigma) = \lim_{x \in G} f(\sigma, x)
$$
By strong preservation of $\ts^n_k$, there is an infinite $g$-thin set $H$
such that $\Hcal$ is $  G \oplus H$-\hyper.
By thinning out the set $H$, we obtain an infinite $ G \oplus H$-computable $f$-thin set $\tilde{H}$.
In particular, $\Hcal$ is $ \tilde{H}$-\hyper.
\end{proof}

It therefore remains to prove the following theorem.

\begin{theorem}\label{thm:ts14-strong-preserve-double-immunity}
$\ts^1_4$ strongly preserves \propertyL.
\end{theorem}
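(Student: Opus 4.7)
The plan is to carry out a Mathias forcing argument building an infinite $f$-thin set $G$ such that $\Hcal$ remains $(C \oplus G)$-\hyper. Given an instance $f : \omega \to 4$, write $A_i = f^{-1}(i)$. A condition is a Mathias condition $(F, X)$ equipped with a fixed color $j < 4$ such that $F \cup X \subseteq \omega \setminus A_j$ and $\Hcal$ is $(C \oplus X)$-\hyper; extensions preserve $j$, grow $F$ inside $X$, and shrink $X$ to an infinite subset. A sufficiently generic filter then yields the desired $f$-thin set $G \subseteq \omega \setminus A_j$.

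The crux of the argument is the existence of an initial condition, which I would establish via the following partition lemma: there exist $j < 4$ and an infinite $X \subseteq \omega \setminus A_j$ with $\Hcal$ being $(C \oplus X)$-\hyper. I would prove this by two applications of a binary pigeonhole preservation principle, reflecting the 2-dimensional $(n, m)$ structure of biarrays, which is why $4 = 2 \times 2$ is the critical number. First, split the four colors into $B_0 = A_0 \cup A_1$ and $B_1 = A_2 \cup A_3$ and produce an infinite $Y \subseteq B_{i_0}$ preserving 2-hyperimmunity. Then subdivide $Y$ by the remaining color-bit to produce an infinite $X \subseteq A_{i^*}$ for a single $i^*$, still preserving 2-hyperimmunity. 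Such an $X$ avoids all colors $j \neq i^*$, providing the initial condition.

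For the density step, fix a Turing functional $\Phi_e$ and a condition $(F, X)$ with color $j$. Using $(C \oplus X)$-\propertyL\ of $\Hcal$, applied to the $(C \oplus X)$-computable biarray $\langle \tilde E_n, \tilde F_{n,m} \rangle$ whose entries encode outputs of $\Phi_e^{C \oplus (F \cup E')}(n;1)$ and $\Phi_e^{C \oplus (F \cup E')}(n,m;2)$ across finite $E' \subseteq X$ above $F$, I obtain indices $(n_0, m_0)$ and a specific $E^* \subseteq X$ realizing a pair in $\Hcal$; extending to $(F \cup E^*, X \cap (\max E^*, \infty))$ then forces $\Phi_e^{C \oplus G}$ to meet $\Hcal$, whence the generic $G$ witnesses preservation.

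The main obstacle is the binary pigeonhole preservation principle in the strong setting, where the partition is not required to be $C$-computable. The classical Jockusch-style merging of two refuting biarrays into a $C$-computable one relies on $C$-computability of the partition and so fails here. I expect the correct argument to replace this merging by a compactness-style tree construction inside the closed set $\Bcal(\Hcal)$ of bad biarrays, exploiting the downward closure of $\Hcal$ under the product subset relation to combine refuting biarrays computed from the two sides of the partition into a single $C$-computable member of $\Bcal(\Hcal)$, contradicting $C$-\propertyL\ of $\Hcal$ via Lemma~\ref{lem:closed-set-double-immune}.
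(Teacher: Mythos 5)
Your proposal has a fatal gap at the initial-condition step. You assert that for an arbitrary 4-partition $A_0 \sqcup A_1 \sqcup A_2 \sqcup A_3 = \omega$ (arbitrary, since in strong preservation the instance $f$ need not be $C$-computable) and any $C$-\hyper\ bifamily $\Hcal$, there exist $i^* < 4$ and an infinite $X \subseteq A_{i^*}$ with $\Hcal$ still $(C\oplus X)$-\hyper, obtained by iterating a ``binary pigeonhole preservation principle'' over non-computable partitions. That principle is precisely the statement that $\rt^1_2$ strongly preserves \propertyL, and it is false. Indeed, by Lemma~\ref{lem:ts-n-to-np1-strongly-double-immune} with $n=1$, $k=2$, strong preservation by $\rt^1_2=\ts^1_2$ would give preservation by $\rt^2_2=\ts^2_2$, hence by the computably weaker $\sts^2_3$; but the corollary to Proposition~\ref{thm:sts23-priority} shows $\sts^2_3$ does not preserve \propertyL. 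You can also refute it directly: take the stable computable $f:[\omega]^2\to 3$ of Proposition~\ref{thm:sts23-priority}, so that $\Hcal(f)$ is \hyper, and consider the 3-partition $C_0(f)\sqcup C_1(f)\sqcup C_2(f)$. For any infinite $X\subseteq C_i(f)$, let $E_n$ be the least element of $X$ above $n$ and $F_{n,m}=\{m+1\}$ if $i\neq 1$, or $E_n=\{n+1\}$ and $F_{n,m}$ the least element of $X$ above $m$ if $i=1$; this $X$-computable biarray never meets $\Hcal(f)$, since one coordinate is a singleton lying outside the required class ($C_1(f)$, resp.\ $C_2(f)$). So no part of the partition carries a preserving infinite subset. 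The compactness construction you hope for inside $\Bcal(\Hcal)$ therefore cannot exist, since the claim it would establish is false. Downward closure of $\Hcal$ does let you merge two refuting biarrays coordinatewise into one that still misses $\Hcal$, but the merge is only $(X\oplus Y)$-computable, and nothing brings it down to $C$ when the partition is not $C$-computable.

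Your intuition that $4 = 2\times 2$ reflects the $(n,m)$-structure of biarrays also misidentifies where $4$ comes from. The paper never commits to a single color: it forces with tuples $(F_0,F_1,F_2,F_3,X)$, building all four candidate thin sets in parallel with $F_i$ thin for color $i$, and only at the end does one $G_i$ turn out to preserve. The number $4$ enters in Case~1 of Lemma~\ref{lem:ts14-strong-preserve-double-immunity-force-Q}: compactness produces three colors $\{i_0,i_1,i_2\}$ and two auxiliary colorings $g,h$ (two because the Seetapun search for $U_n$ and then $V_{n,m}$ has two levels), and one needs some $i\in\{i_0,i_1,i_2\}$ with $\{x: g(x)\neq i,\, h(x)\neq i\}$ infinite, which holds because $3>2$; with only $3$ colors the pigeonhole fails, as the paper's footnote records. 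The freedom to keep all four candidates alive, and to switch among them from one forcing step to the next, is essential, and it is exactly the flexibility your fixed-color conditions give up. Your density step, by contrast, is in the spirit of Lemma~\ref{lem:mathias-double-immunity} and would be sound given a correct initial condition, so the gap is isolated to the partition lemma.
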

\begin{proof}
For notational convenience, we will prove the non-relativized version,
 which extends by routine arguments.
Let $\Hcal$ be a \hyper\ bifamily, and let $f:\omega\rightarrow 4$ be an arbitrary instance of $\ts^1_4$.
Without loss of generality, assume that there is no infinite subset~$H$ of $f^{-1}(i)$
such that $\Hcal$ is $  H$-\hyper\ (as otherwise we are done).
We are going to
\begin{align}\nonumber
\bullet\ \ &\text{build 4 infinite sets $(G_i:i<4)$
such that $G_i$ is $f$-thin for color $i$
and}\\ \nonumber
&\text{$\Hcal$ is $  G_i$-\hyper\ for some $i < 4$.}
\end{align}
We are going to build the sets~$G_i$ by a Mathias forcing whose \emph{conditions}
are tuples $(F_0, F_1, F_2, F_3, X)$, where $(F_i,X) $ is a Mathias condition,
 $F_i$ is $f$-thin for color $i$
and $\Hcal$ is $ X$-\hyper.
A condition $d = (E_0, E_1, E_2, E_3, Y)$ \emph{extends} $c = (F_0, F_1, F_2, F_3, X)$ (written $d \leq c$)
if $(E_i, Y)$ Mathias extends $(F_i, X)$ for every $i < 4$.

The first lemma ensures that every sufficiently generic filter
for this notion of forcing will induce  four 
 infinite sets.

\begin{lemma}\label{lem:ts14-strong-preserve-double-immunity-force-R}
For every condition $c = (F_0, F_1, F_2, F_3, X)$ and every $i < 4$,
there is an extension $d = (E_0, E_1, E_2, E_3, Y)$ of $c$ such that
$|E_i| > |F_i|$.
\end{lemma}
\begin{proof}
Fix $c$ and $i < 4$. Note that $X \setminus f^{-1}(i)$ is infinite,
since otherwise it
contradicts with our assumption that $\Hcal$ is $  X$-\hyper.
Let $x \in X \setminus f^{-1}(i)$ with $x > F_i$.
The condition $d = (E_0, E_1, E_2,E_3, X  )$
defined by $E_i = F_i \cup \{x\}$, and $E_j = F_j$ for $j \neq i$
is the desired extension of~$c$.
\end{proof}

A 4-tuple of sets $G_0, G_1, G_2, G_3$ \emph{satisfies} a condition
$c = (F_0, F_1, F_2, F_3, X)$ if $G_i$ satisfies the Mathias condition $(F_i, X)$
for every $i < 4$. A condition $c$ \emph{forces} a formula $\varphi(G_0, G_1, G_2, G_3)$
if the formula holds for every 4-tuple of sets $G_0, G_1, G_2, G_3$ satisfying $c$.
Given any $e_0, e_1, e_2, e_3$, we want to satisfy the following requirements:
$$
\Rcal_{e_0, e_1, e_2, e_3} \mbox{ : } \Rcal^0_{e_0} \vee \Rcal^1_{e_1} \vee \Rcal^2_{e_2} \vee \Rcal^3_{e_3}
$$
where $\Rcal^i_e$ is the requirement:
$$
\text{If $\Phi_e^{G_i}$ is total, then it meets $\Hcal$.}$$

\begin{lemma}\label{lem:ts14-strong-preserve-double-immunity-force-Q}
For every condition $c$ and every 4-tuple of indices $e_0, e_1, e_2, e_3$,
there is an extension $d$ of $c$ forcing $\Rcal_{e_0, e_1, e_2, e_3}$.
\end{lemma}
Lemma \ref{lem:ts14-strong-preserve-double-immunity-force-Q},
\ref{lem:ts-n-strongly-double-immune-assuming-force-R},
\ref{lem:fs1-left-trapped-preserves-2}
and
\ref{lem:fs2-left-trapped-preserves-2}
 are main technical lemmas of Theorem \ref{thm:many-and-ts24-not-sem-and-many}
 (where we prove a condition can be extended to force that a given Turing functional meets $\mcal{H}$).
 We briefly explain one of the ideas of these lemmas
 (which also appears in the main lemma \ref{uemlemmain} of Theorem \ref{uemth0})——a generalization of
 Seetapun forcing to build weak solution.
 Usually, given an arbitrary instance $f$ (of a problem),
 we want to build a solution  $G$ to $f$ so that $\Phi^G$ has  a desired behaviour.
Since $f$ is not computable, we cannot search computably among initial segments $F$ of solutions to $f$
(call such $F$ finite solution to $f$)
such that $\Phi^F$ has that behaviour.
The idea of Seetapun forcing is to find sufficiently many $F$
so that $\Phi^F$ has that behaviour. By ``sufficiently many", it means whatever $f$ looks like,
one of $F$ is, at least, a finite solution to $f$ .

In our case, this means we want to find sufficiently many $F$ such that
$\Phi^F(n;1)\downarrow$ and $\Phi^F(n,m;2)\downarrow$.
But this is not quite enough. It only gives (by compactness) two sets
$U_{n,m},V_{n,m}$ so that for every $g$, there is a finite solution $F$
of $g$
 such that $\Phi^F(n;1)\subseteq U_{n,m}\wedge \Phi^F(n,m;2)\subseteq V_{n,m}$.
That is, $U_{n,m}$ depends on $m$.
So one may want to try this: first, search for a sufficient collection $\mcal{F}$, so that
$\Phi^F(n;1)\downarrow$ for each $F\in\mcal{F}$;
second, search for a sufficient collection $\mcal{E}$ each $E\in\mcal{E}$ extends a member of $\mcal{F}$
and $\Phi^E(n,m;2)\downarrow$.
Let's see what sufficiency notion  $\mcal{F}$ should satisfy. When $\mcal{F}$ exists while $\mcal{E}$
does not, we have that for some instance $g$,
\begin{align}\label{fstseq12}
&\text{there is no finite solution $E$ of $g$
such that $E$ extends a member of $\mcal{F}$ }\\ \nonumber
&\text{and $\Phi^E(n,m;2)\downarrow$.}
\end{align}
We need to
find an  appropriate $F\in\mcal{F}$ such that $F$ is a finite solution to $g$ and
 restrict $G$ so that $G$ extends $F$ and $G$ is a solution to $g$
 (because by (\ref{fstseq12}), for such $G$, $\Phi^G(n,m;2)\uparrow$).
 Here ``appropriate" means: at least, $F$ is a finite solution to $f$.
 The sufficiency notion for $\mcal{F}$ should guarantee the existence of $F$.
This gives the following sufficiency notion:
 for every two instances $g$ and $h$, there is an $F\in\mcal{F}$ such that $F$
 is a finite solution to both $g,h$.
 This is exactly the sufficiency notion we use in Lemma \ref{lem:ts14-strong-preserve-double-immunity-force-Q}.
 For more complex problems, $F$ being a finite solution to $f$ is not enough, but
  we must ensure that imposing the restriction of $F$
 does not cut the candidate space too much. This concern gives rise
 to the more complex sufficiency notion (Definition \ref{def:tssuff}, \ref{def:fssuff}
 and Lemma \ref{lem:fs-with-enough-sets-free}, \ref{lem:tssuff}).

\begin{proof}
Fix $c = (F_0, F_1, F_2, F_3, X)$.
For notational convenience, we assume $X=\omega$ and $F_i = \emptyset$.
We  define 
 a partial computable biarray as follows.
 To obtain a desired extension of $c$, we take advantage
 of the failure of this biarray to meet $\Hcal$ or its non-totality.
\bigskip

\emph{Defining $U_n$}.
Given $n \in \omega$, search computably for some
finite set $U_n>n$\footnote{More concretely, we mean search the canonical index of $U_n$.} (if it exists) such that
for every pair of colorings
$g,h:\omega\rightarrow 4$, there are two colors $i_0 < i_1 < 4$
and two sets $E_{i_0}$ and $E_{i_1}$ such that for every $i \in \{i_0, i_1 \}$,
$E_i $ is both $g$-thin and $h$-thin for color $i$  and
\footnote{By compactness, if $U_n$ is found, there is a sufficient
sequence $(\mcal{F}_i:i<4)$
of finite  collections
  of finite sets so that for every $E\in\mcal{F}$,
$\Phi_{e_i}^{E}(n;1)\downarrow\subseteq U_n$.
Here sufficient means for every two colorings $g,h:\omega\rightarrow 4$,
there are $i_0<i_1<4$ and $E_i\in\mcal{F}_i$ for each $i\in \{i_0,i_1\}$
so that $E_i$ is $g$-thin, $h$-thin for color $i$. }
$$
\Phi_{e_i}^{   E_i}(n;1) \downarrow \subseteq U_n.
$$

\emph{Defining $V_{n,m}$}. Given $n, m \in \omega$, search  computably for some
finite  set $V_{n,m}>m$ (if it exists) such that for every
coloring $g:\omega\rightarrow 4 $, there is an $i < 4$ and a finite set $E_i $ $g$-thin for color $i$
such that
$$
\Phi_{e_i}^{  E_i}(n;1) \downarrow \subseteq U_n
	\wedge \Phi_{e_i}^{ E_i}(n, m;2) \downarrow \subseteq V_{n,m}.
$$
\smallskip

We now have multiple outcomes, depending on which of $U_n$ and $V_{n,m}$ is found.

\begin{itemize}
	\item Case 1: $U_n$ is not found for some $n \in \omega$. By compactness, the following $\Pi^{0}_1$ class
$\Pcal$ of  pairs of colorings
$g,h:\omega\rightarrow 4$
is nonempty:	
	 there are three indices $i_0 < i_1 < i_2 < 4$ such that for every $i \in \{i_0, i_1, i_2\}$
	and every finite set $E_i$ being both $g$-thin and $h$-thin for color $i$, we have
	$\Phi_{e_i}^{E_i}(n;1) \uparrow$.

	As $\wkl$ preserves \propertyL\
	(Corollary~\ref{cor:wkl-double-immunity}), there is a member $(g,h)$
 of $\Pcal$ such that
	$\Hcal$ is $g\oplus h$-\hyper.
	In particular, there are some $i_0 < i_1 < i_2 < 4$ such that for every $i \in \{i_0, i_1, i_2\}$
	and every  finite $E_i $ being $g$-thin and $h$-thin for color $i$, we have
	$\Phi_{e_i}^{E_i}(n;1) \uparrow$.
	There must be an $i\in \{i_0,i_1,i_2\}$ such that
the set
$Y=\{x:g(x)\ne i,h(x)\ne i\}$ is infinite.\footnote{This is where the argument works with $\ts^1_4$ and fails with $\ts^1_3$ : with $\ts^1_3$, we would only get two colors $\{i_0, i_1\}$, and if $g$ and $h$ are the constant functions $i_0$ and $i_1$, respectively, the set $Y$ is finite for every $i \in \{i_0, i_1\}$.}  Then clearly
	$(F_0, F_1, F_2, F_3, Y)$ is an extension of $c$.
For every $G$ satisfying $(F_i,Y)$, $G$ is $g$-thin for color $i$.
 Thus  $\Phi_{e_i}^G(n;1)\uparrow$. That is, $d$ forces $\Phi_{e_i}^G(n;1)\uparrow$,
 hence $\Rcal_{e_0, e_1, e_2, e_3}$.
	\bigskip

	\item Case 2: $U_n$ is found, but not $V_{n,m}$ for some $n, m \in \omega$. By compactness,
	the  following $\Pi^{0}_1$ class
$\Pcal$ of    colorings $g:\omega\rightarrow 4$  is nonempty:
	for every $i < 4$ and every finite set $E_i $ $g$-thin for color $i$,
\begin{align}\label{fstseq11}
	 \Phi_{e_i}^{E_i}(n;1) \downarrow \subseteq U_n
		\Rightarrow \Phi_{e_i}^{ E_i}(n, m;2) \uparrow.
	\end{align}
 As $\wkl$ preserves \propertyL\
	(Corollary~\ref{cor:wkl-double-immunity}), there is a member $g $
	of $\Pcal$ such that $\Hcal$ is $g$-\hyper.
	By definition of $U_n$ (where we take $h=f$ in the definition of
$U_n$),
	there are some $i_0 < i_1 < 4$ and some finite sets $E_{i_0}$ and $E_{i_1}$
	such that for every  $i \in \{i_0, i_1\}$, $E_i $ is both $g$-thin and $f$-thin for color $i$
and
	$$
	\Phi_{e_i}^{  E_i}(n;1) \downarrow \subseteq U_n.
	$$
	In particular, there must be some $i \in \{i_0, i_1\}$ such that  the set $Y= \{x :g(x)\ne i\}$ is infinite.
	Consider the extension  $d = (D_0, D_1, D_2,D_3, Y)$  of $c$ defined by
	$D_i = F_i \cup E_i$ and $D_j = F_j$ for each $j \neq i$.
To see $d$
	forces
	$\Phi_{e_i}^{  G_i}(n, m;2) \uparrow$ (hence forces $\Rcal_{e_0, e_1, e_2, e_3}$),
note that for every $G$ satisfying $(D_i,Y)$,
$G$ is $g$-thin for color $i$.
But $\Phi_{e_i}^{D_i}(n;1)\downarrow\subseteq U_n$.
Thus, by definition of $g$ (namely (\ref{fstseq11})), $\Phi_{e_i}^{  G}(n, m;2) \uparrow$.
	\bigskip

	\item Case 3: $U_n$ and $V_{n,m}$ are found for every $n,m \in \omega$.
	By  \propertyL\ of $\Hcal$, there is some $n, m \in \omega$
	such that $(U_n, V_{n,m}) \in \Hcal$.
	In particular, by definition of $V_{n,m}$ (where we take $g=f$ in the definition of
$V_{n,m}$), there is some $i < 4$ and some finite set $E_i  $ such that
$E_i$ is $f$-thin for color $i$ and
	$$
	 \Phi_{e_i}^{ E_i }(n;1) \downarrow \subseteq U_n
		\wedge \Phi_{e_i}^{  E_i }(n, m;2) \downarrow\subseteq V_{n,m}.
	$$
	The condition $(D_0, D_1, D_2, D_3, X)$ defined by $D_i = F_i \cup E_i$
	and $D_j = F_j$ for each $j \neq i$ is an extension of $c$ forcing $\Rcal_{e_0, e_1, e_2, e_3}$.
\end{itemize}
This completes the proof of Lemma~\ref{lem:ts14-strong-preserve-double-immunity-force-Q}.
\end{proof}

Let $\Fcal = \{c_0, c_1, \dots \}$ be a sufficiently generic filter for this notion of forcing,
where $c_s = (F_{0,s}, F_{1,s}, F_{2,s}, F_{3,s},\allowbreak X_s)$, and let $G_i = \bigcup_s F_{i,s}$.
By definition of a condition, for every $i < 4$, $G_i $ is $f$-thin for color $i$.
By Lemma~\ref{lem:ts14-strong-preserve-double-immunity-force-R}, $G_i$ are all infinite,
and by Lemma~\ref{lem:ts14-strong-preserve-double-immunity-force-Q}, there is some $i < 4$ such that $\Hcal$ is $  G_i$-\hyper.
This completes the proof of Theorem~\ref{thm:ts14-strong-preserve-double-immunity}.
\end{proof}

For $\msf{\ts^2_3}$, its relation with $\msf{EM}$ is unclear.
\begin{question}
Does $\msf{\ts^2_3}$ imply $\msf{EM}$?
\end{question}

\subsection{Generalized cohesiveness preserves \propertyL}

In order to  prove that $\ts^n_k$ and $\fs^n$ preserve \propertyL\ for sufficiently
large $k$, we first need to prove the following technical theorem, which thin out colors
while preserving \propertyL.
The proof is a slight adaptation of~\cite{PateyCombinatorial} to \propertyL.
We however reprove it for the sake of completeness. We will need the case $t = n-1$
for $\ts^n_k$, and the case $t = n$ for $\fs^n$.
Fix a set  $C$, a bifamily $\Hcal$ which is $C$-\hyper,
an infinite set $X\leq_T C$; let $f : [\omega]^n \to k$ be a coloring.
\begin{proposition}\label{thm:gen-coh-double-immmunity}
Assume $\ts^s_{d_s+1}$ strongly preserves \propertyL\ for each  $0<s<n$.
Then there exists an infinite set $G\subseteq X$ such that $\Hcal$ is $C \oplus G$-\hyper,
and for every $\sigma \in [\omega]^{<\omega}$ such that $0< \card{\sigma} < n$,
there is a set $I_\sigma \subseteq \{0, \dots, k-1\}$ such that $|I_\sigma| \leq d_{n-|\sigma|}$ and
$$
(\exists b)(\forall \tau \in [G \cap (b, +\infty)]^{n-|\sigma|}) f(\sigma, \tau) \in I_\sigma.
$$
\end{proposition}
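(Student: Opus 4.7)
The plan is to proceed by induction on $n$. The base case $n = 1$ is vacuous, since no tuple $\sigma$ satisfies $0 < |\sigma| < 1$; take $G = X$.

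For the inductive step, first apply Corollary~\ref{cor:coh-preserves-double-immunity} to the $C$-computable sequence $\vec{R} = \langle R_{\sigma, i} : \sigma \in [\omega]^{n-1},\ i < k \rangle$, where $R_{\sigma, i} = \{x > \max \sigma : f(\sigma, x) = i\}$. This produces an infinite $\vec{R}$-cohesive $Y \subseteq X$ such that $\Hcal$ is $C \oplus Y$-\hyper. For each $\sigma \in [\omega]^{n-1}$, let $g(\sigma)$ be the unique $i < k$ with $Y \subseteq^{*} R_{\sigma, i}$; equivalently, $f(\sigma, y) = g(\sigma)$ for every sufficiently large $y \in Y$. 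The coloring $g : [\omega]^{n-1} \to k$ is $\Delta^{0,\,C \oplus Y}_2$, and setting $I_\sigma = \{g(\sigma)\}$ of size $1 \leq d_1$ already handles every $\sigma$ of size $n - 1$.

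Next, apply the inductive hypothesis to the coloring $g$, with $C \oplus Y$ in place of $C$ and $Y$ in place of $X$. The assumption that $\ts^s_{d_s+1}$ strongly preserves \propertyL\ for $0 < s < n-1$ is a restriction of what we have assumed. The result is an infinite $H \subseteq Y$ with $\Hcal$ being $C \oplus Y \oplus H$-\hyper, and, for each $\sigma$ with $0 < |\sigma| < n-1$, a set $J_\sigma$ of size $\leq d_{n-1-|\sigma|}$ such that $g(\sigma, \tau) \in J_\sigma$ for all sufficiently large $\tau \in [H]^{(n-1)-|\sigma|}$. The last step is a thinning that transfers control from $g$ back to $f$: pick $g_0 < g_1 < \cdots$ in $H$ so that $g_{j+1}$ exceeds every threshold past which $f(\mu, \cdot)$ has stabilized on $Y$, for every $\mu \in [\{g_0, \dots, g_j\}]^{n-1}$. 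For the resulting $G = \{g_j\}_j$, any tuple $\tau \in [G]^{n-|\sigma|}$ with all entries sufficiently large splits as $\tau_* \cup \{\max \tau\}$ with $\tau_* \subseteq G$, giving $f(\sigma, \tau) = g(\sigma, \tau_*) \in J_\sigma$; setting $I_\sigma = J_\sigma$ and invoking monotonicity $d_{n-1-|\sigma|} \leq d_{n-|\sigma|}$ establishes the required bound.

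The principal obstacle is that the thinning just described refers to the $\Delta^{0, C \oplus Y}_2$ function $g$, so a naive implementation makes $G$ only $(C \oplus Y)' \oplus H$-computable, outstripping the \propertyL\ preservation available over $C \oplus Y \oplus H$. The fix I anticipate is to absorb the uniformization into the construction of $H$ in the inductive step: one builds $H$ via Mathias-type forcing subject to the side condition that its partial enumerations already witness the cohesive limits for previously chosen tuples, using the strong-preservation hypothesis on $\ts^s_{d_s+1}$ to extend conditions while keeping $\Hcal$ \hyper\ over $C \oplus Y \oplus H$. With that strengthening of the inductive output, $G$ is directly $C \oplus Y \oplus H$-computable and \propertyL\ is preserved throughout.
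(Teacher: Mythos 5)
The central difficulty is that $f$ is an \emph{arbitrary} coloring here, not a $C$-computable one: this proposition is invoked inside strong-preservation arguments (e.g.\ in Lemma~\ref{thm:gen-coh-art} and Lemma~\ref{lem:fs-cohesiveness-strong-preservation}), where the instance cannot be assumed computable in $C$. Your very first step applies Corollary~\ref{cor:coh-preserves-double-immunity} to the family $\vec R = \langle R_{\sigma,i}\rangle$, calling it ``$C$-computable,'' but $\vec R$ is defined from $f$ and so is not $C$-computable. Corollary~\ref{cor:coh-preserves-double-immunity} is \emph{preservation}, not \emph{strong} preservation, and no strong form for $\coh$ is available (nor could one be: a cohesive set for an arbitrary $\vec R$ would generally compute too much). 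So the cohesiveness step does not go through, and the rest of the induction — including the $\Delta^{0,C\oplus Y}_2$ limit coloring $g$ and the thinning back to $f$, whose own computability issues you flag — is built on it.

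The paper avoids cohesiveness entirely. Its proof is a single Mathias forcing with conditions $(F,Y)$ such that $\Hcal$ is $Y$-\hyper; Lemma~\ref{lem:gen-coh-function} handles one tuple $\sigma$ at a time by applying the \emph{strong} preservation hypothesis on $\ts^{n-|\sigma|}_{d_{n-|\sigma|}+1}$ directly to the (arbitrary, non-computable) coloring $g_\sigma(\tau) = f(\sigma,\tau)$ on $[Y]^{n-|\sigma|}$, shrinking the reservoir to some $\tilde Y$ with $f(\sigma,\cdot)$ confined to a set $I$ of size $\leq d_{n-|\sigma|}$ and $\Hcal$ still $\tilde Y$-\hyper; Lemma~\ref{lem:mathias-double-immunity} forces the $\Rcal_e$ requirements; a sufficiently generic filter then yields $G$. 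Genericity takes care of the infinitely many tuples $\sigma$, with no induction on $n$ needed in the construction itself. This is exactly what makes the ``strong'' in the hypothesis on $\ts^s_{d_s+1}$ essential; your proposal never uses it and consequently cannot handle arbitrary $f$. Your anticipated fix (absorbing the cohesive limits into side conditions of the Mathias forcing) does not touch the actual obstruction, which is that you cannot obtain a suitable cohesive set for a non-$C$-computable $\vec R$ in the first place.
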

\begin{proof}
For notational convenience, assume $X=\omega$ and $C = \emptyset$.
Our forcing conditions are Mathias conditions $(F, Y)$ where $\Hcal$ is $  Y$-\hyper.
The first lemma shows that $\Hcal$ will be $  G$-\hyper\ for every sufficiently generic filter.
Given $e$, let $\Rcal_e$ be the requirement:
\begin{align}\nonumber
\text{If $\Phi^{ G}_e$ is total, then it meets $\Hcal$.}
\end{align}

\begin{lemma}\label{lem:mathias-double-immunity}
Given a condition $c = (F, Y)$ and an index $e \in \omega$,
there is an extension $d$ forcing $\Rcal_e$.
\end{lemma}
\begin{proof}
This simply follows by a finite extension argument.
Again for notational convenience, assume $F=\emptyset$ and $Y=\omega$.
 We define a partial  computable biarray as follows.
\bigskip

\emph{Defining $U_n$}.
Given $n \in \omega$, search  computably for some
finite   set $U_n>n$ (if it exists) and a finite set $E  $
such that
$$
\Phi_e^{   E}(n;1) \downarrow = U_n.
$$

\emph{Defining $V_{n,m}$}. Given $n, m \in \omega$, search computably for some
finite set $V_{n,m}>m$ (if it exists) and a finite set $E  $ such that
$$
\Phi_e^{  E}(n;1) \downarrow = U_n
	\wedge \Phi_e^{  E}(n, m;2) \downarrow = V_{n,m}.
$$
\smallskip

We now have multiple outcomes, depending on which $U_n$ and $V_{n,m}$ is found.
\begin{itemize}
	\item Case 1: $U_n$ is not found for some $n \in \omega$.
	Then the condition $c = (F, Y)$ already forces  $\Phi_e^{ G}(n;1) \uparrow$
	and therefore forces $\Rcal_e$.

	\item Case 2: $U_n$ is found, but not $V_{n,m}$ for some $n, m \in \omega$.
	By definition of $U_n$, there is a finite set $E  $ such that
	$$
	\Phi_e^{   E }(n;1) \downarrow = U_n
	$$
	The condition $d = ( E, Y)$ is an extension of $c$
	forcing  $\Phi_e^{  G}(n,m; 2) \uparrow$.

	\item Case 3: $U_n$ and $V_{n,m}$ are found for every $n,m \in \omega$.
	By  \propertyL\ of $\Hcal$, there is some $n, m \in \omega$
	such that $(U_n, V_{n,m}) \in \Hcal$.
	In particular, by definition of $V_{n,m}$, there is a finite set $E  $ such that
	$$
	\Phi_e^{  E}(n;1) \downarrow = U_n
		\wedge \Phi_e^{  E}(n, m;2) \downarrow = V_{n,m}
	$$
	The condition $d = (  E, Y )$ is an extension of $c$ forcing
	$\Phi^{  G}_e$ to meet $\Hcal$, and therefore forcing $\Rcal_e$.
\end{itemize}
This completes the proof of Lemma~\ref{lem:mathias-double-immunity}.
\end{proof}

\begin{lemma}\label{lem:gen-coh-function}
For every condition $(F, Y)$ and $\sigma \in [\omega]^{<\omega}$ such that $0< \card{\sigma} < n$,
there is a finite set $I \subseteq \{0, \dots, k-1\}$ with $|I| \leq d_{n-|\sigma|}$
and an extension $(F, \t Y)$ such that
$$
(\forall \tau \in [\tilde{X}]^{n-|\sigma|})f(\sigma, \tau) \in I.
$$
\end{lemma}
\begin{proof}
This simply follows from strong preservation of $\ts^{n - |\sigma|}_{d_{n-|\sigma|}+1}$ .
Define the function $g : [Y]^{n - |\sigma|} \to k$ by $g(\tau) = f(\sigma, \tau)$.
Since  $\ts^{n - |\sigma|}_{d_{n-|\sigma|}+1}$ strongly preserves \propertyL
\ (since $0<n-|\sigma|<n$),
there exists an infinite  $\t Y\subseteq Y$ and a finite set $I \subseteq \{0, \dots, k-1\}$
such that $\Hcal$ is $  \tilde{Y}$-\hyper, $|I| \leq d_{n-|\sigma|}$,
and $(\forall \tau \in [\tilde{Y}]^{n-|\sigma|})f(\sigma, \tau) \in I$.
The condition $(F, \tilde{Y})$ is the desired extension.
\end{proof}

Let~$\Fcal = \{c_0, c_1, \dots\}$ be a sufficiently generic filter containing $(\emptyset, \omega)$,
where $c_s = (F_s, X_s)$. The filter~$\Fcal$ yields a unique infinite set~$G = \bigcup_s F_s$.
By Lemma~\ref{lem:mathias-double-immunity}, $\Hcal$ is $  G$-\hyper.
By Lemma~\ref{lem:gen-coh-function}, $G$ satisfies the property of the theorem.
This completes the proof of Proposition~\ref{thm:gen-coh-double-immmunity}.
\end{proof}

\subsection{$\ts^n$ preserves \propertyL}\label{subsect:tsn-preserves}
\def\p{2d_{n-1}+\sum_{0\leq s<n}d_{s-1}d_{n-s-1}}
Define the sequence $d_0, d_1, \dots$ by induction as follows:
$$
d_0 = 1 \hspace{1cm} d_{n} = \p+\sum_{0 < s < n} d_s d_{n-s} \hspace{20pt} \mbox{ for } n > 1
$$
The purpose of this section is to prove the following theorem.

\begin{theorem}\label{thm:ts-n-strongly-preserves-double}
$\ts^n_{d_n+1}$ strongly preserves \propertyL\ for every $n \geq 1$.
\end{theorem}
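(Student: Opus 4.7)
The plan is to proceed by induction on $n$. The base case $n = 1$ corresponds to $d_1 = 3$ and is exactly Theorem~\ref{thm:ts14-strong-preserve-double-immunity}. For the inductive step, assume $\ts^s_{d_s+1}$ strongly preserves \propertyL{} for every $0 < s < n$, and let $\Hcal$ be a $C$-\hyper{} bifamily together with an arbitrary coloring $f : [\omega]^n \to d_n + 1$. The goal is to produce an infinite $f$-thin set $G$ with $\Hcal$ still $C \oplus G$-\hyper.

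First, I would apply Proposition~\ref{thm:gen-coh-double-immmunity}, whose hypothesis is met by the induction hypothesis, to pass to an infinite set $X$ with $\Hcal$ still $C \oplus X$-\hyper, such that for every nonempty $\sigma$ of size less than $n$ there is a set $I_\sigma \subseteq \{0, \dots, d_n\}$ of size at most $d_{n - |\sigma|}$ and a bound $b_\sigma$ above which $f(\sigma, \tau) \in I_\sigma$ for every $\tau \in [X]^{n-|\sigma|}$. This pre-thinning bounds, at each sub-dimension $s$, the number of colors that any sub-tuple can contribute to a potential $n$-tuple above it inside $X$, which is what makes the subsequent forcing combinatorics feasible.

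Second, I would run a Mathias-style forcing inside $X$ with $d_n + 1$ parallel tracks, directly generalizing the proof of Theorem~\ref{thm:ts14-strong-preserve-double-immunity}. Conditions are tuples $(F_0, \dots, F_{d_n}, Y)$ with $Y \subseteq X$ infinite, $\Hcal$ still $Y$-\hyper, and each $F_i$ a finite $f$-thin-for-$i$ set compatible with growth in $Y$. An extension lemma analogous to Lemma~\ref{lem:ts14-strong-preserve-double-immunity-force-R} guarantees each track $G_i$ is infinite, so the task reduces to forcing, for every tuple $(e_0, \dots, e_{d_n})$ of indices, the disjunction $\bigvee_{i \leq d_n} \Rcal^i_{e_i}$ asserting that some $\Phi^{G_i}_{e_i}$ is either non-total or meets $\Hcal$. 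Following Lemma~\ref{lem:ts14-strong-preserve-double-immunity-force-Q}, I would do this by defining a partial computable biarray from \emph{sufficient collections} of $f$-thin finite extensions $E_i$ satisfying $\Phi^{E_i}_{e_i}(n;1) \downarrow \subseteq U_n$ (and similarly for the second coordinate), then performing a three-case analysis. In the failure cases, $\wkl$-preservation (Corollary~\ref{cor:wkl-double-immunity}) selects a counter-instance keeping \propertyL, and pigeonhole among the $d_n + 1$ colors produces an infinite subset of $Y$ on which some designated track is forced to diverge; in the success case, \propertyL{} of $\Hcal$ supplies a pair $(U_n, V_{n,m}) \in \Hcal$ yielding an extension on which some track is forced to meet $\Hcal$.

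The hard part is the combinatorics justifying the specific value of $d_n$. Each case analysis must pigeonhole among $d_n + 1$ colors, and the summands in $d_n = 2 d_{n-1} + \sum_{0 \leq s < n} d_{s-1} d_{n-s-1} + \sum_{0 < s < n} d_s d_{n-s}$ should respectively account for the two counter-colorings $g, h$ used in the $U_n$-failure branch, the pre-thinning budgets at the fixed-part/new-part boundary of a finite extension, and the pairwise interactions of pre-thinning budgets across sub-tuples of the finite extensions of different sizes. The delicate point is to design the sufficiency notion so that the collections produced are large enough for $\wkl$-preservation to apply in each failure branch, yet small enough that the color budget is never exhausted. Once this calibration is in place, the construction of a sufficiently generic filter and the verification that $\Hcal$ remains $C \oplus G_i$-\hyper{} for some $i$ follow immediately from the forcing lemmas, closing the induction.
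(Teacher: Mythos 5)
Your outline matches the paper's overall strategy --- induction on $n$ with base case Theorem~\ref{thm:ts14-strong-preserve-double-immunity}, a cohesiveness-style pre-thinning, a multi-track Mathias forcing, and a three-case analysis driven by $\wkl$-preservation and the $2$-hyperimmunity of $\Hcal$ --- but two interlocking steps are off in a way that would block the argument for $n \geq 2$. The pre-thinning you invoke (Proposition~\ref{thm:gen-coh-double-immmunity}) produces only a per-$\sigma$ budget $I_\sigma$ of size $\leq d_{n-|\sigma|}$; the union $\bigcup_\sigma I_\sigma$ could still cover all $d_n+1$ colors. The paper's Lemma~\ref{thm:gen-coh-art} applies the induction hypothesis once more: treating $\sigma \mapsto I_\sigma$ as an $s$-dimensional coloring and using strong preservation of $\ts^s_{d_s+1}$, it restricts to a set $Y$ on which this map takes at most $d_s$ values, so that a \emph{single} set $I_f$ with $|I_f| \leq \sum_{0 < s < n} d_s d_{n-s}$ (call it $q$) captures all mixed tuples. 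That uniformization is where the third summand of $d_n$ is actually spent, and without it the track invariant below cannot even be stated.

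Consequently the forcing has $p = d_n + 1 - q = 1 + 2d_{n-1} + \sum_{0 \leq s < n} d_{s-1} d_{n-s-1}$ parallel tracks, not $d_n+1$. After renaming colors so that $I_f = \{p, \dots, d_n\}$, property (a) of a condition requires that mixed tuples $(\sigma,\tau)$ with $\sigma \subseteq F_i$ and $\tau \subseteq F_i \cup X$ receive colors $\geq p$; this is precisely what lets a track $F_i$ with $i < p$ grow by a finite $f$-thin set $E \subseteq X$ while remaining $f$-thin for color $i$ (Lemma~\ref{lem:ts-n-strongly-double-immune-assuming-combi}). For $i \geq p$ that protection is absent: new reservoir elements can create mixed $n$-tuples colored $i$, so such a track cannot be kept $f$-thin for color $i$ across extensions. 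The sufficiency notion (Definition~\ref{def:tssuff}) and Lemma~\ref{lem:tssuff} are accordingly calibrated to $p$-tuples, and the pigeonhole in the failure cases produces an $i < p$ rather than an $i < d_n+1$. A condition $(F_0, \dots, F_{d_n}, Y)$ would carry $q$ dead tracks; the identity $p = d_n+1$ holds exactly at $n=1$, where $q=0$, which is why the direct generalization of the $\ts^1_4$ proof with one track per color fails for larger $n$.
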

\begin{proof}
We prove by induction over $n \geq 1$ that
$\ts^n_{d_{n-1}+1}$ preserves \propertyL, and that $\ts^n_{d_n+1}$ strongly preserves \propertyL.
If $n = 1$, $\ts^1_2$ is a computably true statement, that is, every instance
has a solution computable in the instance, so $\ts^1_2$ preserves \propertyL.
On the other hand, $\ts^1_4$ strongly preserves \propertyL\ follow from Theorem \ref{thm:ts14-strong-preserve-double-immunity}.
If $n > 1$, then by the induction hypothesis, $\ts^{n-1}_{d_{n-1}+1}$ strongly preserves \propertyL,
so by Lemma~\ref{lem:ts-n-to-np1-strongly-double-immune}, $\ts^n_{d_{n-1}+1}$ preserves \propertyL.
Assuming by  the induction hypothesis that $\ts^s_{d_s+1}$ strongly preserves \propertyL\
for every $0 < s < n$, and that $\ts^n_{d_{n-1}+1}$ preserves \propertyL,
by Theorem~\ref{thm:ts-n-strongly-double-immune-assuming}, $\ts^n_{d_n+1}$ strongly preserves \propertyL.
\end{proof}

We need to prove Theorem~\ref{thm:ts-n-strongly-double-immune-assuming} to complete the proof of
 Theorem~\ref{thm:ts-n-strongly-preserves-double}.
We start with the following technical lemma which thins out colors while
preserving \hyper.
Fix a set  $C$, a bifamily $\Hcal$
which is $C$-\hyper, an infinite set $X\leq_T C$ and a coloring $f : [\omega]^n \to k$.
\begin{lemma}\label{thm:gen-coh-art}
Assume $\ts^s_{d_s+1}$ strongly
preserves \propertyL\ for every  $0<s <n$.
Then there is an infinite set $Y\subseteq X$ so that $\Hcal$ is $C \oplus Y$-\hyper,
and a finite set $I \subseteq \{0, \dots, k-1\}$ with
$
|I| \leq \sum_{0 < s < n} d_s d_{n-s}
$
such that for each  $0<s <n$,
$$
(\forall \sigma \in [Y]^s)(\exists b)(\forall \tau \in [Y \cap (b, \infty)]^{n-s})f(\sigma, \tau) \in I
$$
\end{lemma}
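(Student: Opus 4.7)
The plan is to use Proposition~\ref{thm:gen-coh-double-immmunity} as a first step to obtain a set $G_0 \subseteq X$ on which each $\sigma$-slice has a bounded ``colour spectrum'' $I_\sigma$, and then, for each arity $s$ with $0<s<n$, to iterate strong preservation of \propertyL\ for $\ts^s_{d_s+1}$ on the derived coloring $\sigma \mapsto I_\sigma$ in order to reduce the number of distinct spectra occurring at that arity. The budget $\sum_{0<s<n} d_s d_{n-s}$ is exactly what one obtains by multiplying, at each arity $s$, the surviving number $d_s$ of spectra by their maximum size $d_{n-s}$.

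First, I would invoke Proposition~\ref{thm:gen-coh-double-immmunity} on $(f,X,C,\Hcal)$ to obtain an infinite set $G_0 \subseteq X$ with $\Hcal$ being $C \oplus G_0$-\hyper, and, for each $\sigma$ with $0<|\sigma|<n$, a witness set $I_\sigma \subseteq \{0,\dots,k-1\}$ of size at most $d_{n-|\sigma|}$. To make the assignment $\sigma \mapsto I_\sigma$ well-defined, I would canonise it by taking $I_\sigma$ to be the unique minimum such set, namely the set of colours $i$ such that $\{\tau \in [G_0]^{n-|\sigma|} : f(\sigma,\tau)=i\}$ contains arbitrarily late tuples.

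Next, I process $s = 1, 2, \dots, n-1$ in order, refining $G_{s-1}$ to $G_s \subseteq G_{s-1}$. Let $K_s$ be the finite family of size-$\leq d_{n-s}$ subsets of $\{0,\dots,k-1\}$, and consider the coloring $h_s : [G_{s-1}]^s \to K_s$ defined by $h_s(\sigma) = I_\sigma$. As long as $h_s$ uses more than $d_s$ distinct values on the current set, I partition the colours used into $d_s+1$ non-empty classes (at least one of which is a singleton) and apply the assumed strong preservation of \propertyL\ for $\ts^s_{d_s+1}$ to the resulting $(d_s+1)$-coloring: the thin subset returned avoids one of the classes, strictly decreasing the number of values taken by $h_s$. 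After at most $|K_s|-d_s$ such iterations I reach an infinite $G_s \subseteq G_{s-1}$ on which $h_s$ takes at most $d_s$ values; closure of strong preservation under finite composition keeps $\Hcal$ $C \oplus G_s$-\hyper\ throughout.

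Finally, set $Y = G_{n-1}$ and $I = \bigcup_{0<s<n} \bigcup_{\sigma \in [Y]^s} I_\sigma$. At each arity $s$, at most $d_s$ distinct spectra occur, each of size at most $d_{n-s}$, so $|I| \leq \sum_{0<s<n} d_s d_{n-s}$; and the cofinality property that defines each $I_\sigma$ on $G_0$ is inherited by its subset $Y$. The hard part is that $h_s$ is not in general computable from $C$ — its values $I_\sigma$ are only arithmetical in $G_0$ via the cofinal definition — so ordinary preservation of $\ts^s_{d_s+1}$ would not apply; the hypothesis is phrased in terms of \emph{strong} preservation precisely to allow arbitrary, non-computable instances such as $h_s$.
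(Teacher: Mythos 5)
Your proposal is correct and follows the same approach as the paper: first apply Proposition~\ref{thm:gen-coh-double-immmunity} to bound each slice spectrum $I_\sigma$, then treat $\sigma \mapsto I_\sigma$ as an $s$-ary coloring and use strong preservation of $\ts^s_{d_s+1}$ (iterated to cut the range down to $d_s$ values) to obtain $Y$, with $I$ the union of the surviving spectra. The paper compresses the colour-reduction iteration into a single sentence, but the mechanism and the final counting $\sum_{0<s<n} d_s d_{n-s}$ are identical, and your closing remark about why \emph{strong} preservation is needed (since $I_\sigma$ is only arithmetical in $G_0$, not computable) matches the intent of the hypothesis.
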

\begin{proof}
For notational convenience, assume $C=\emptyset$.
Apply Proposition \ref{thm:gen-coh-double-immmunity}
to get an infinite set $X_0\subseteq X$
so that $\Hcal$
is $X_0  $-\hyper\ and for every $\sigma\in [\omega]^{<\omega}$
with $0<|\sigma|<n$,
there is an $I_\sigma$
such that  $|I_\sigma| \leq d_{n-|\sigma|}$ and
$$
(\exists b)(\forall \tau \in [X_0 \cap (b, +\infty)]^{n-|\sigma|}) f(\sigma, \tau) \in I_\sigma.
$$

For each  $0<s<n$
 and~$\sigma \in [\omega]^s$, let $F_s(\sigma) = I_\sigma$.
Since $\ts^s_{d_s+1}$ strongly preserves \propertyL,
  for each  $0<s<n$,
  there is an infinite set $Y\subseteq X_0$
  such that $\Hcal$ is $Y $-\hyper\ and
  such that $|F_s[Y]^s|\leq d_s$
  for all $0<s<n$.
  Let $\Ical_s = F_s[Y]^s$ for each  $0<s<n$, and let
$I = \bigcup_{J\in\Ical_s, 0 < s < n} J$.
Then
$$
|I| \leq \sum_{0 < s < n} d_s d_{n-s}.
$$
We now check that the property is satisfied.
Fix an $0<s<n$, a $\sigma \in [Y]^s$ and let $b\in\omega$ be sufficiently large. Because  $Y \subseteq X_0$,
$$
 (\forall \tau \in [Y \cap (b,+\infty)]^{n-s}) f(\sigma,\tau) \in I_\sigma.
$$
So $F_s(\sigma) = I_\sigma$, but $\sigma \in [Y]^s$, hence $I_\sigma \in \Ical_s$.
It follows that
$$
 (\forall \tau \in [Y \cap (b,+\infty)]^{n-s}) f(\sigma,\tau) \in I .
$$
 This completes the proof.
\end{proof}

We need to prove a second lemma saying that if we have sufficiently many finite thin sets,
one of them can be extended to an infinite one.
This argument is a generalization of case 2 of Lemma
\ref{lem:ts14-strong-preserve-double-immunity-force-Q}.
\begin{definition}[\tssufficient]
\label{def:tssuff}
Let $(\mcal{F}_i:i<p)$ be a $p$-tuple of finite collections
of finite sets.
We say $(\mcal{F}_i:i<p)$
is $n$-\emph{\tssufficient}
iff for every
sequence of colorings $(f_{s,j}:[\omega]^s\rightarrow  d_n+1)_{0\leq s<n, j< d_{n-s-1}}$,
there is an $i<p$, an $F\in\mcal{F}_i$ such that
$F$ is $f_{s,j}$-thin for color $i$
for all $0\leq s<n, j<d_{n-s-1}$.
\end{definition}
In our application, $p$ will be smaller than $d_n$.
Let $(\mcal{F}_i:i<p)$ be a $p$-tuple of finite collections
of finite  sets.
\begin{lemma}\label{lem:tssuff}
Assume $\ts^s_{d_s+1}$ strongly
preserves \propertyL\ for every  $0<s <n$.
Suppose $f\leq_T C$, $(\mcal{F}_i:i<p)$
is $n$-\tssufficient\ and for every $i < p$, $ \mcal{F}_i$ is $f$-thin for color~$i$\footnote{Each member of $\mcal{F}_i$ is $f$-thin for color $i$.} .
Then there exists an $i<p$, an $F\in\mcal{F}_i$
and an infinite set $Y\subseteq X$
such that $F\cup Y$ is $f$-thin for color $i$ and $\Hcal$ is $Y\oplus C$-\hyper.
\end{lemma}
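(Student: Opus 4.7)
The plan is to combine the $2$-hyperimmunity-preserving thinning from Proposition~\ref{thm:gen-coh-double-immmunity} with the $n$-\tssufficient\ property of $(\Fcal_i : i < p)$, following the template of Case~2 of Lemma~\ref{lem:ts14-strong-preserve-double-immunity-force-Q}. First, I would apply Proposition~\ref{thm:gen-coh-double-immmunity} to the coloring $f$ and the set $X$, invoking the inductive hypothesis that $\ts^s_{d_s+1}$ strongly preserves \propertyL\ for $0 < s < n$, to obtain an infinite set $X_0 \subseteq X$ such that $\Hcal$ is $C \oplus X_0$-\hyper, together with finite sets $I_\sigma \subseteq \{0, \dots, d_n\}$ of size at most $d_{n-|\sigma|}$ for each $\sigma$ with $0 < |\sigma| < n$, satisfying $f(\sigma, \tau) \in I_\sigma$ whenever $\tau$ lies in a sufficiently late finite tuple from $X_0$.

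Next I would observe that a pair $(i, F)$ with $i < p$ and $F \in \Fcal_i$ extends to an infinite $Y \subseteq X_0$ making $F \cup Y$ thin for color $i$ precisely when (a)~$i \notin I_\sigma$ for every $\sigma \in [F]^s$ with $0 < s < n$ (which handles mixed tuples in $[F]^s \times [Y]^{n-s}$), and (b)~the set $Z_i = \{y \in X_0 : i \notin I_{\{y\}}\}$ is infinite, so that a sparse $Y \subseteq Z_i$ handles all-$Y$ tuples through their minimum element, while $F$ is thin for $i$ by assumption. Since $|I_{\{y\}}| \leq d_{n-1}$, the bad set $B = \{i < p : Z_i \text{ finite}\}$ satisfies $|B| \leq d_{n-1}$. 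I would then design a family $(f_{s,j} : [\omega]^s \to d_n+1)_{0 \leq s < n,\, j < d_{n-s-1}}$ whose simultaneous $F$-thinness for color $i$ forces both (a) and (b): the $d_{n-1}$ constant colorings at level $s = 0$ enumerate $B$, while for each $s \geq 1$ the $d_{n-s-1}$ level-$s$ colorings encode the elements of $I_\sigma$, with any excess elements cascaded to level-$(s+1)$ colorings depending only on the first $s$ entries of the $(s+1)$-tuple. Invoking the $n$-\tssufficient\ property yields the pair $(i, F)$, and I would then sparsify $Z_i$ into the desired $Y$ preserving $C \oplus Y$-\propertyL\ of $\Hcal$ by first passing to a cohesive refinement of $X_0$ via Corollary~\ref{cor:coh-preserves-double-immunity} (so that the $I_\sigma$'s become computable from the refined parameter), and then taking a sparse subset.

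The main obstacle is the design of the colorings $f_{s,j}$: the $d_{n-s-1}$ slots per level in the $n$-\tssufficient\ framework are strictly fewer than the $d_{n-s}$ bound on $|I_\sigma|$, so a straightforward enumeration falls short and the excess must be cascaded through $(s+1)$-level colorings, with delicate accounting for the finitely many $s$-tuples in $F$ which have no strict extension in $F$ available to witness the level-$(s+1)$ encoding. This bookkeeping is precisely what the cross-term sums $\sum_{0 < s < n} d_s d_{n-s}$ in the recurrence defining $d_n$ are calibrated to accommodate.
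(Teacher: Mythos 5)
Your proposal diverges from the paper's proof at the first step, and the divergence creates a gap that the proposed ``cascading'' does not close. You apply Proposition~\ref{thm:gen-coh-double-immmunity}, which is a \emph{strong}-preservation statement (it must work for arbitrary, non-computable $f$) and therefore only delivers $|I_\sigma| \leq d_{n-|\sigma|}$. As you correctly notice, this exceeds the $d_{n-|\sigma|-1}$ slots available at level $|\sigma|$ in Definition~\ref{def:tssuff}. But the cascading you sketch to absorb the excess cannot work in general: a level-$(s{+}1)$ coloring $f_{s+1,j'}$ only constrains $(s{+}1)$-subsets of $F$ through $n$-$\ts$-sufficiency, so when $\sigma \in [F]^s$ has no proper extension inside $F$ (in particular whenever $|F| = s$, which certainly happens), no level-$(s{+}1)$ coloring sees $\sigma$ at all and the excess elements of $I_\sigma$ go uncontrolled. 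You flag this yourself as the need for ``delicate accounting,'' but there is no accounting that repairs it; the issue is structural.

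The paper avoids the problem by not invoking Proposition~\ref{thm:gen-coh-double-immmunity} at all. The observation you missed is that $E := \bigcup_{i<p} \bigcup_{F \in \mcal{F}_i} F$ is a single \emph{finite} set, and since $f \leq_T C$, each section $f_\sigma(\tau) := f(\sigma,\tau)$ for $\sigma \in [E]^s$ is a $C$-computable coloring of $[\omega]^{n-s}$. One can therefore apply ordinary \emph{preservation} of $\ts^{n-s}_{d_{n-s-1}+1}$ (available for $0 \leq s < n$ via the hypothesis and Lemma~\ref{lem:ts-n-to-np1-strongly-double-immune}) iteratively over the finitely many pairs $(s,\sigma)$, each time shrinking $Y$ and preserving $2$-hyperimmunity of $\Hcal$. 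This directly yields $|I_\sigma| \leq d_{n-s-1}$ — exactly matching the $d_{n-s-1}$ colorings per level that the $n$-$\ts$-sufficient framework provides — so the $f_{s,j}$ can enumerate $I_\sigma$ completely with no cascading and no special treatment of short $F$. A further simplification you didn't avail yourself of: the $s=0$ coloring $f_\emptyset = f$ gives $|I_\emptyset| \leq d_{n-1}$, and $n$-$\ts$-sufficiency immediately forces $i \notin I_\emptyset$, making $Y$ itself $f$-thin for color $i$ without any argument through singleton sections $I_{\{y\}}$.
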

\begin{proof}
Again, for notational convenience, assume $C=\emptyset$.
Let $E=\cup_{F\in \mcal{F}_i,i<p}F$.
For every $s<n$, every $\sigma\in [E]^s$,
let coloring $f_\sigma:[\omega]^{n-s}\rightarrow  d_n+1$ be defined as $f_\sigma(\tau) = f(\sigma,\tau)$.
By Lemma~\ref{lem:ts-n-to-np1-strongly-double-immune}, $\ts^s_{d_{s-1}+1}$ admits preservation of \propertyL\ for $0\leq s\leq n$ (set $d_{-1} = 1$), so
there is an infinite set $Y\subseteq X$ with $\Hcal$ being $Y $-\hyper\ such that
for every $0\leq s<n$, every $\sigma\in [E]^s$,
there is a $I_\sigma$ with $|I_\sigma|\leq d_{n -s-1}$
such that $$f_\sigma[Y]^{n-s}\subseteq I_\sigma.$$
For every $0\leq s<n$ and $j< d_{n-s-1}$, let $f_{s,j}$ be the coloring on
$[E]^s$ such that $f_{s,j}(\sigma)  $ is the $j$th element of $I_\sigma$.

By $n$-\tssufficient\ of $(\mcal{F}_i:i<p)$,
there is a $i<p$, an $F\in\mcal{F}_i$ such that
$F$ is $f_{s,j}$-thin for color $i$ for all $0\leq s<n$ and $j<d_{n-s-1}$.
In particular, $i\notin I_\emptyset $ since $F$ is $f_{0,j}$-thin for color $i$
(and $f_{0,j}\equiv$  the $j$th element of $I_\emptyset$).
This means $Y$ is $f$-thin for color $i$.

We show that $F\cup Y$ is $f$-thin for color $i$.
To see this, let $\sigma\in [F]^{<\omega},\tau\in [Y]^{<\omega}$
with $|\sigma\cup \tau| = n$.
When $|\sigma|=n$ or $|\tau| = n$, $ f(\sigma,\tau)\ne i$
follows from $f$-thin for color $i$ of $F$ and $Y$ respectively.
When $|\sigma|=s$ with $0<s<n$, since $F$ is $f_{s,j}$-thin for color $i$ for all $j<d_{n-s-1}$,
we have $f_{s,j}(\sigma)\ne i$ for all $j<d_{n-s-1}$.
This means $i\notin I_\sigma$.
Thus $f(\sigma,\tau)\ne i$ since $f(\sigma,\tau)\in I_\sigma$
(by choice of $Y$).
\end{proof}

We are now ready to prove the missing theorem.

\begin{theorem}\label{thm:ts-n-strongly-double-immune-assuming}
Fix some $n \geq 2$, and suppose that $\ts^s_{d_s+1}$ strongly preserves \propertyL\
for every $0 < s < n$, and that $\ts^n_{d_{n-1}+1}$ preserves \propertyL.
Then  $\ts^n_{d_n+1}$ strongly preserves \propertyL.
\end{theorem}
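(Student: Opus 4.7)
The plan is to adapt the $\ts^1_4$ argument of Theorem~\ref{thm:ts14-strong-preserve-double-immunity} to dimension $n$ and $d_n+1$ colors. Conditions will be tuples $(F_0,\dots,F_{d_n},X)$ in which each $F_i$ is $f$-thin for color $i$, each $(F_i,X)$ is a Mathias condition, and $\Hcal$ is $C\oplus X$-\hyper. As in the base case, one may WLOG assume that for no color $i$ does there exist an infinite $H\subseteq f^{-1}(i)$ with $\Hcal$ being $C\oplus H$-\hyper, since otherwise we are already done. A sufficiently generic filter then produces $d_n+1$ infinite candidate sets $G_0,\dots,G_{d_n}$, with $G_i$ being $f$-thin for color $i$; it remains to force, for every tuple $(e_0,\dots,e_{d_n})$, the disjunctive requirement $\Rcal_{e_0,\dots,e_{d_n}}:\bigvee_{i\le d_n}\Rcal^i_{e_i}$, where $\Rcal^i_e$ asserts that if $\Phi^{G_i}_e$ is total then it meets $\Hcal$.

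The two main sub-lemmas are the extension lemma (that each $F_i$ can be enlarged) and the requirement-forcing lemma. The first is immediate from the WLOG assumption, since $X\setminus f^{-1}(i)$ must be infinite for every $i$. For the second, I would first apply Lemma~\ref{thm:gen-coh-art} inside $X$ to replace $X$ by an infinite $Y$ for which there is a single finite set $I\subseteq\{0,\dots,d_n\}$ of size at most $\sum_{0<s<n}d_sd_{n-s}$ containing all the asymptotic values of the lower-arity tails $f(\sigma,\cdot)$ for $\sigma\in[Y]^s$, $0<s<n$. This absorbs the $\sum d_sd_{n-s}$ contribution to $d_n$ and leaves at least $2d_{n-1}+\sum_{0\le s<n}d_{s-1}d_{n-s-1}+1$ colors outside $I$ to play with.

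The main combinatorial step is a three-outcome analysis on a partial computable biarray $(U_n,V_{n,m})$ mirroring the search in the proof of Lemma~\ref{lem:ts14-strong-preserve-double-immunity-force-Q}. The ``two-coloring sufficiency'' used there is now replaced by the $n$-\tssufficient\ notion (Definition~\ref{def:tssuff}), whose witnessing families $\Fcal_i$ of candidate thin extensions are produced via Lemma~\ref{lem:tssuff} from the inductive hypothesis that each $\ts^s_{d_s+1}$ ($0<s<n$) strongly preserves \propertyL. If some $U_n$ is not found, compactness yields a $\Pi^0_1$ class of coloring-tuples on which every candidate $\Phi_{e_i}^{E_i}(n;1)$ diverges; Corollary~\ref{cor:wkl-double-immunity} produces a member of the class preserving \propertyL, and the summand $2d_{n-1}+\sum d_{s-1}d_{n-s-1}$ in the definition of $d_n$ is exactly what guarantees a color $i\le d_n$ outside $I$ for which the reservoir $Y\cap\bigcap_j\{x:g_j(x)\ne i\}$ remains infinite. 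The $V_{n,m}$-undefined case is handled similarly, with Lemma~\ref{lem:tssuff} supplying a finite $f$-thin $E_i$ that completes the extension in coordinate $i$. If every $U_n$ and $V_{n,m}$ is defined, \propertyL\ of $\Hcal$ furnishes a pair $(U_n,V_{n,m})\in\Hcal$, which in turn yields a finite extension forcing $\Phi^{G_i}_{e_i}$ to meet $\Hcal$ in some coordinate.

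The main obstacle is the bookkeeping in the first two cases, where one must simultaneously run the $\wkl$-basis argument on the auxiliary $\Pi^0_1$ class, apply the $n$-\tssufficient\ property to single out the correct color $i$, and verify that the three summands in the recursion $d_n=2d_{n-1}+\sum_{0\le s<n}d_{s-1}d_{n-s-1}+\sum_{0<s<n}d_sd_{n-s}$ collectively leave enough color budget for an infinite subreservoir of $Y$, thin for color $i$ and avoiding the hypothetical colorings $g_j$, to persist. This delicate color-counting is precisely what forces the recursion for $d_n$ to take the stated shape.
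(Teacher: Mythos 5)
Your high-level plan — apply Lemma~\ref{thm:gen-coh-art} to absorb $q = \sum_{0<s<n}d_sd_{n-s}$ colors into a single $I$, then run a Mathias forcing with a disjunctive requirement and a three-case biarray argument, using $n$-\tssufficient{} families and Lemma~\ref{lem:tssuff} in place of the naive two-coloring sufficiency — matches the skeleton of the paper's proof. However, several of the steps you sketch do not actually survive the jump from $n=1$ to $n\geq 2$, and one of them breaks the construction.

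The WLOG reduction and the extension lemma are where the proposal goes wrong. For $n=1$ one can assume no infinite $H\subseteq f^{-1}(i)$ preserves \propertyL{}, and then $X\setminus f^{-1}(i)$ is infinite, so any element of $X\setminus f^{-1}(i)$ extends $F_i$. For $n\geq 2$ this reasoning is meaningless: $f^{-1}(i)\subseteq[\omega]^n$ rather than $\subseteq\omega$, so ``$X\setminus f^{-1}(i)$'' is just $X$, and adding an element $x\in X$ to $F_i$ can introduce new $n$-tuples $\sigma\cup\{x\}$ with $\sigma\in[F_i]^{n-1}$ colored $i$. What the paper does instead is build a structural invariant into the notion of condition: besides requiring each $F_i$ to be $f$-thin for color $i$ and $\Hcal$ to be $X$-\hyper, it imposes property~(a), that every mixed $n$-tuple $(\sigma,\tau)$ with $\sigma\in[F_i]^s$ and $\tau\in[F_i\cup X]^{n-s}$ receives a color $\geq p$. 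This is precisely what the preliminary application of Lemma~\ref{thm:gen-coh-art} (together with renaming $I_f=\{p,\dots,d_n\}$) makes achievable, and it is what makes Lemma~\ref{lem:ts-n-strongly-double-immune-assuming-combi} and hence the extension lemma go through: any new $n$-tuple crossing the stem/reservoir boundary automatically has color $\geq p>i$. Your proposal applies Lemma~\ref{thm:gen-coh-art} only inside the requirement-forcing step, without threading its conclusion into the definition of a condition, so the extension lemma as you sketch it is false as stated for $n\geq 2$.

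A related inconsistency: you declare conditions to be tuples $(F_0,\dots,F_{d_n},X)$, i.e.\ $d_n+1$ candidate sets, while the paper builds only $p=1+2d_{n-1}+\sum_{0\leq s<n}d_{s-1}d_{n-s-1}$ of them (one per color outside $I_f$). If you keep the $G_i$ for $i\in I_f$ in the construction, there is no reason they can ever be made infinite — exactly because $I_f$ is where the coloring's tails concentrate — so the extension lemma would fail for them too. Finally, in your Case 1 discussion you speak of the reservoir ``$Y\cap\bigcap_j\{x:g_j(x)\ne i\}$'', which is again $n=1$ notation; for general $n$ the paper has to unfold $n$-\tssufficiency{} to obtain a second $\Pi^0_1$ class of coloring-tuples $(f_{s,j})$, apply $\wkl$-preservation twice (once for $(g,h)$, once for $(f_{s,j})$), and then use preservation of $\ts^s_{d_{s-1}+1}$ to produce an infinite $Y$ that is simultaneously thin enough for all of $g$, $h$, and the $f_{s,j}$'s — after which the inequality $p>2d_{n-1}+\sum_{0\leq s<n}d_{s-1}d_{n-s-1}$ gives a surviving color $i<p$. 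Your write-up gestures at this color count but does not describe the actual mechanism for producing $Y$. In short, the outline is right, but the two key load-bearing components — the condition invariant (a) enabling extension, and the double $\wkl$/thinning argument in Case 1 — are missing or stated in a form that only works for $n=1$.
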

\begin{proof}
Fix a coloring $f : [\omega]^n \to d_n+1$,
  and a bifamily $\Hcal$ which is  \hyper.
Let~$q = \sum_{0 < s < n} d_s d_{n-s}$.
By Lemma~\ref{thm:gen-coh-art}, we assume
that there exists a finite set $I_f$ of cardinality $q$
 such that for every  $0<s<n$,
\begin{align}\label{fstseq5}
(\forall \sigma \in [\omega]^s)(\exists b)
	(\forall \tau \in [ \omega\cap (b,+\infty)]^{n-s}) f(\sigma, \tau) \in I_f.
\end{align}

Let $p =  1+\p$, so $d_n= p+q-1$.
By renaming the colors of $f$, we can assume without loss of generality
that $I_f = \{p, p+1, \dots, d_n\}$.
We will construct simultaneously $p$ infinite sets $G_0, \dots, G_{p -1}$ such that
$\Hcal$ is $  G_i$-\hyper\ for some $i < p$. We furthermore ensure that for each $i < p$,
$G_i$ is $f$-thin for color~$i$.
We construct our sets $G_0, \dots, G_{p-1}$ by a Mathias forcing whose \emph{conditions} are tuples
$(F_0, \dots, F_{p-1}, X)$, where $(F_i, X)$ is a Mathias condition for each $i < p$
and the following properties hold:
\begin{itemize}
	\item[(a)] $(\forall \sigma \in [F_i]^s)
	(\forall \tau \in [F_i \cup X]^{n-s}) f(\sigma, \tau) \geq p$ for every $0<s<n$.
\item [(b)] $F_i$ is $f$-thin for color $i$ for every $i < p$.
	\item[(c)] $\Hcal$ is $  X$-\hyper.
\end{itemize}
 A \emph{precondition} is a tuple of  Mathias conditions satisfying (b) and (c).
A precondition $d = (E_0, \dots, E_{p-1}, Y)$ \emph{extends}
a precondition $c = (F_0, \dots, F_{p-1}, X)$ (written $d \leq c$)
if $(E_i, Y)$ Mathias extends $(F_i, X)$ for each $i < p$.
Obviously, $(\emptyset, \dots, \emptyset, \omega)$ is a  condition.
Therefore, the partial order is non-empty.
We note the following simple properties
of conditions.

\begin{lemma}\label{lem:ts-n-strongly-double-immune-assuming-combi}
\
\begin{enumerate}
\item Every precondition can be extended to a condition.
\item For every condition $c = (F_0, \dots, F_{p-1}, X)$,
every $i < p$ and every finite set $E \subseteq X$ $f$-thin for color $i$,
  $d = (F_0, \dots, F_{i-1}, F_i \cup E, F_{i+1}, \dots, F_{p-1}, X)$
is a precondition extending $c$.
\end{enumerate}
\end{lemma}
\begin{proof}
Item (1) is trivial by (\ref{fstseq5}).
For item (2),
by property (a) and (b) for condition $c$ and by $f$-thin for color $i$ of $E$,
we have $i\notin f[F_i\cup E]^n$,
so $d$ is a precondition.
\end{proof}

The next lemma states that every sufficiently generic filter yields
infinite sets $G_0, \dots, G_{p-1}$.

\begin{lemma}\label{lem:ts-n-strongly-double-immune-assuming-force-Q}
For every condition $c = (F_0, \dots, F_{p-1}, X)$ and every $i < p$,
there is an extension $d = (E_0, \dots, E_{p-1}, X)$ of $c$ such that
$|E_i| > |F_i|$.
\end{lemma}
\begin{proof}
Fix $c$ and some $i < p$, and let $x\in X\setminus F_i$.
In particular, $[x]^n = \emptyset$, so $i \not \in f[x]^n$.
Thus, by Lemma~\ref{lem:ts-n-strongly-double-immune-assuming-combi},
there is an extension $d = (E_0, \dots, E_{p-1}, X)$ of $c$ such that
$E_i = F_i \cup \{x\}$.
\end{proof}

A $p$-tuple of sets $G_0, \dots, G_{p-1}$ \emph{satisfies} a condition
$c = (F_0, \dots, F_{p-1}, X)$ if $G_i$ satisfies the Mathias condition $(F_i, X)$.
A condition $c$ \emph{forces} a formula $\varphi(G_0, \dots, G_{p-1})$
if the formula holds for every $p$-tuple of sets $G_0, \dots, G_{p-1}$ satisfying $c$.
For every $e_0, \dots, e_{p-1} \in \omega$, we want to satisfy the following requirement
$$
\Rcal_{e_0, \dots, e_{p-1}} : \Rcal_{e_0} \vee \dots \vee \Rcal_{e_{p-1}}
$$
where $\Rcal_{e_i}$ is the requirement
\begin{align}\nonumber
\text{If $\Phi^{  G_i}_{e_i}$ is a total,
then $\Phi^{  G_i}_{e_i}$ meets $\Hcal$.}
\end{align}

\begin{lemma}\label{lem:ts-n-strongly-double-immune-assuming-force-R}
For every condition $c$ and every $p$-tuple of indices $e_0, \dots, e_{p-1}$,
there is an extension $d$ of $c$ forcing $\Rcal_{e_0, \dots, e_{p-1}}$.
\end{lemma}
\begin{proof}
Fix $c = (F_0, \dots, F_{p-1}, X)$.
By Lemma \ref{lem:ts-n-strongly-double-immune-assuming-combi},
for notational convenience, we assume $F_i=\emptyset$ and $X=\omega$
\footnote{More specifically, if we can always extends a condition of form $(\emptyset,\cdots,\emptyset,X)$,
then given a condition $(F_0, \dots, F_{p-1}, X)$,
we can find a desired extension $(E_0,\cdots,E_{p-1})$ of $(\emptyset,\cdots,\emptyset,X)$.
But $(F_0\cup E_0,\cdots,F_{p-1}\cup E_{p-1},Y)$ is a precondition by Lemma \ref{lem:ts-n-strongly-double-immune-assuming-combi}.}.
We define a partial computable biarray as follows.
\bigskip

\emph{Defining $U_n$}.
Given $r \in \omega$, search  computably for some
finite  set $U_r>r$ (if it exists) such that
for every pair of colorings $g, h : [\omega]^n \to d_n+1$,
there
is a $n$-\tssufficient\ $p$-tuple $(\mcal{E}_i:i<p) $ of
finite collections of finite sets
with $\mcal{E}_i$ being $g$-thin, $h$-thin   for color $i$
such that for every $i<p$, every $E\in\mcal{E}_i$, we have
$$
\Phi_{e_i}^{  E }(r;1) \downarrow \subseteq U_r.
$$

\emph{Defining $V_{r,m}$}. Given $r, m \in \omega$, search  computably for some
finite  set $V_{r,m}>m$ (if it exists) such that for every
coloring $g : [\omega]^n \to d_n+1$, there is some $i < p$ and some $E_i  $
$g$-thin for color~$i$ such that
$$
\Phi_{e_i}^{  E_i }(r;1) \downarrow \subseteq U_r
	\wedge \Phi_{e_i}^{E_i }(r, m;2) \downarrow \subseteq V_{r,m}.
$$
\smallskip

We now have multiple outcomes, depending on which $U_r$ and $V_{r,m}$ is found.

\begin{itemize}
	\item Case 1: $U_r$ is not found for some $r \in \omega$. By compactness, the following $\Pi^{0}_1$ class
	$\Pcal$ of   pairs of colorings $g, h : [\omega]^n \to d_n+1$ is nonempty:
	 there
is no $n$-\tssufficient\ $(\mcal{E}_i:i<p)$
finite collections of finite sets
such that $\mcal{E}_i$ is both $g$-thin, $h$-thin for color $i$
and for every $i<p,E\in\mcal{E}_i$, we have
	$
	\Phi_{e_i}^{ E}(r;1) \downarrow.
	$

	As $\wkl$ preserves \propertyL\
	(Corollary~\ref{cor:wkl-double-immunity}), there is a member  $g, h $ of
	  $\Pcal$ such that
	$\Hcal$ is $g \oplus h  $-\hyper.
Unfolding the definition of $n$-\tssufficient\ and using compactness,
the following $\Pi_1^{0,g\oplus h}$ class $\mcal{Q}$ of
sequence $(f_{s,j}:[\omega]^s\rightarrow  d_n+1)_{0\leq s<n,j<d_{n-s-1}}$	
of colorings is nonempty:
for every $i<p$, every finite set $E$ which is both $g$-thin, $h$-thin for color $i$
and is $f_{s,j}$-thin for color $i$ for all $0\leq s<n,j<d_{n-s-1}$, we have
$
	\Phi_{e_i}^{ E}(r;1) \uparrow.
	$

As $\wkl$ preserves \propertyL\
	(Corollary~\ref{cor:wkl-double-immunity}), there is a member  $(f_{s,j}:0\leq s<n,j<d_{n-s-1})$	 of
	  $\mcal{Q}$ such that
$\Hcal$ is $g\oplus h\oplus_{0\leq s<n,j<d_{n-s-1}}f_{s,j}$-\hyper.
Since $\ts^s_{d_{s-1}+1}$ preserves \propertyL\ for all $0\leq s\leq n$,
there is an infinite set $Y$ such that
$|f_{s,j}[Y]^s|\leq d_{s-1}$ for all $0\leq s<n,j<d_{n-s-1}$, $|g[Y]^s|,|h[Y]^s|\leq d_{n-1}$
and $\Hcal$ is $Y$-\hyper.
Since $p> \p$,
there is an $i<p$ such that $Y$ is $f_{s,j}$-thin for color $i$
for all $0\leq s<n,j<d_{n-s-1}$ and both $g$-thin, $h$-thin for color $i$.

Clearly $d = (F_0, \dots, F_{p-1}, Y)$ is an extension of $c$
\footnote{When we say ``extension of $c$", we mean a precondition extending $c$.}.
We prove that
$d$ forces $\Rcal_{e_0, \dots, e_{p-1}}$.
This is because if $G_i$ satisfies $(F_i,Y)$, then $G_i$ is both $g$-thin and $h$-thin for color $i$
and $f_{s,j}$-thin for color $i$ for all $0\leq s<n,j<d_{n-s-1}$.
Thus, by definition of $g,h,(f_{s,j}:0\leq s<n,j<d_{n-s-1}) $,
we have $\Phi_{e_i}^{ G_i}(r;1) \uparrow$.

	\bigskip

	\item Case 2: $U_r$ is found, but not $V_{r,m}$ for some $r, m \in \omega$. By compactness,
	the following $\Pi^{0}_1$ class $\Pcal$ of colorings $g : [\omega]^n \to d_n+1$ is nonempty:
	 for every $i < p$ and every $E_i  $ $g$-thin for color~$i$,
\begin{align}\label{fsts-tsnstrongpreserve-defofg}
	\Phi_{e_i}^{  E_i }(r;1) \downarrow \subseteq U_r
		\Rightarrow \Phi_{e_i}^{  E_i }(r, m;2) \uparrow.
	\end{align}
	  As $\wkl$ preserves \propertyL\
	(Corollary~\ref{cor:wkl-double-immunity}), there is a member $g  $
of $\Pcal$ such that $\Hcal$ is $g  $-\hyper.
	By definition of $U_r$ (where we take $h = f$), there
is a $n$-\tssufficient\ $p$-tuple $(\mcal{E}_i:i<p)$ of
finite collections of finite sets
such that $\mcal{E}_i$ is both $g$-thin and $f$-thin for color $i$
and for every $i<p$, every $ E\in\mcal{E}_i$, we have
	$$
	\Phi_{e_i}^{ E}(r;1) \downarrow \subseteq U_r.
	$$
By Lemma \ref{lem:tssuff}, there is an $i<p$, $E\in\mcal{E}_i$
and an infinite set $Y$ such that $E\cup Y$ is $g$-thin for color $i$
and $\Hcal$ is $Y$-\hyper.
Consider the precondition $(F_0,\cdots,F_{i-1},F_i\cup E,F_{i+1},\cdots F_{p-1},Y)$.

It remains to show that $d$ forces $\Phi_{e_i}^{G_i}(n,m;2)\uparrow$.
This is because if $G_i$ satisfies $(E,  Y)$, then $G_i$ is $g$-thin  for color $i$.
But $\Phi_{e_i}^{ E}(r;1) \downarrow \subseteq U_r$.
 Thus, by definition of $g$
 (namely (\ref{fsts-tsnstrongpreserve-defofg})),
  $\Phi_{e_i}^{G_i}(r,m;2) \uparrow$.

	\bigskip

	\item Case 3: $U_r$ and $V_{r,m}$ are found for every $r,m \in \omega$.
	By  \propertyL\ of $\Hcal$, there is some $r, m \in \omega$
	such that $(U_r, V_{r,m}) \in \Hcal$.
	In particular, by definition of $V_{n,m}$ (where we take $g=f$),
	there is some $i < p$ and some $E_i $ $f$-thin for color~$i$ such that
	$$
	\Phi_{e_i}^{  E_i }(r;1) \downarrow \subseteq U_r
		\wedge \Phi_{e_i}^{E_i }(r, m;2) \downarrow \subseteq V_{r,m}
	$$
	
Since $i\notin f[E_i]^n$,
we have $d = (F_0, \dots, F_{i-1}, F_i \cup E_i, F_{i+1}, \dots, F_{p-1}, X)$ is an extension of $c$.
Clearly  $d$ forces $\Rcal_{e_0, \dots, e_{p-1}}$.
\end{itemize}
This completes the proof of Lemma~\ref{lem:ts-n-strongly-double-immune-assuming-force-R}.
\end{proof}

Let $\Fcal = \{c_0, c_1, \dots \}$ be a sufficiently generic filter for this notion of forcing,
where $c_s = (F_{0,s}, \dots, F_{p-1,s}, X_s)$, and let $G_i = \bigcup_s F_{i,s}$ for every $i < p$.
By property (b) of a condition, for every $i < p$, $G_i$ is $f$-thin for color $i$.
By Lemma~\ref{lem:ts-n-strongly-double-immune-assuming-force-Q}, $G_0, \dots, G_{p-1}$ are all infinite,
and by Lemma~\ref{lem:ts-n-strongly-double-immune-assuming-force-R}, there is some $i < p$ such that
$\Hcal$ is $  G_i$-\hyper.
This completes the proof of Theorem~\ref{thm:ts-n-strongly-double-immune-assuming}.
\end{proof}

\subsection{$\fs^2$ preserves \propertyL}\label{subsect:fs2-preserves}

The purpose of this section is to prove the following theorem.

\begin{theorem}\label{thm:ts-1-strongly-preserves-double}
$\fs^2$ preserves \propertyL.
\end{theorem}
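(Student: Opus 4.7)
The plan is to follow exactly the template of Lemma~\ref{lem:ts-n-to-np1-strongly-double-immune}: use $\coh$-preservation to absorb the ``$y$-variable'' of $f$ into a cohesive reservoir, producing a stable $1$-ary residue, and then appeal to the (trivial) strong preservation of \propertyL\ by $\fs^1$.

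First I would fix a $C$-2-hyperimmune bifamily $\Hcal$ and a $C$-computable coloring $f:[\omega]^2\to\omega$. Defining the $C$-computable sequence $R_{x,k}=\{y>x:f(x,y)=k\}$, Corollary~\ref{cor:coh-preserves-double-immunity} yields an $\vec{R}$-cohesive set $Z$ with $\Hcal$ still $C\oplus Z$-\hyper. Cohesiveness then makes $g(x):=\lim_{y\in Z} f(x,y)$ well-defined in $\omega\cup\{\infty\}$ (and $(C\oplus Z)'$-computable). Splitting on whether the trapped set $T=\{x:g(x)<\infty\}$ is cofinite or coinfinite in $Z$ and passing to a subset, I may assume either $Z\subseteq T$ or $Z\cap T=\emptyset$.

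In the first case the residue $g\uhr Z$ is an arbitrary (possibly noncomputable) instance of $\fs^1$. Since every instance of $\fs^1$ admits a solution computable in the instance, $\fs^1$ trivially strongly preserves \propertyL, so I obtain an infinite $g$-free $H\subseteq Z$ with $\Hcal$ still $C\oplus Z\oplus H$-\hyper. A sparsification of $H$, choosing successive elements far apart inside $Z$ so that $f(x,y)=g(x)$ for all $x<y$ in the new set, converts $g$-freeness into $f$-freeness without harming \propertyL.

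In the second case $g\equiv\infty$ on $Z$, so for any finite $F\subseteq Z$ and any bound $b$ all sufficiently large $y\in Z$ satisfy $f(x,y)>b$ for every $x\in F$. I would then run a Mathias forcing with conditions $(F,X)$ where $F\subseteq Z$ is $f$-free, $X\subseteq Z$ is infinite and $\Hcal$ is $C\oplus Z\oplus X$-\hyper: the escaping property guarantees that any sufficiently large $y\in X$ may be adjoined to $F$ while preserving $f$-freeness, and each requirement $\Rcal_e$ asserting that $\Phi_e^G$ is non-total or meets $\Hcal$ is forced by the now-standard partial-computable-biarray template of Lemma~\ref{lem:mathias-double-immunity}, with the computable search for $U_r, V_{r,m}$ restricted to finite $f$-free extensions of $F$. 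The main obstacle I anticipate is verifying in this second case that this restricted search is rich enough to match the biarray template: this hinges precisely on the escaping hypothesis, which ensures that almost every finite subset of $X$ disjoint from and above $F$ is automatically $f$-free over $F$, so no \propertyL-destroying restriction of the search space is incurred, and the three-case analysis ($U_r$ missing, $V_{r,m}$ missing, both present) carries over verbatim.
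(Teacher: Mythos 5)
The overall structure of your first move is right — it is precisely Lemma~\ref{lem:fs-n-to-np1-strongly-double-immune} specialized to $n=1$, reducing preservation for $\fs^2$ to \emph{strong} preservation for $\fs^1$ via $\coh$-preservation. The problem is your disposition of that last ingredient: ``Since every instance of $\fs^1$ admits a solution computable in the instance, $\fs^1$ trivially strongly preserves \propertyL.'' This inference is false, and it is exactly the point where the paper has to do real work. Strong preservation quantifies over \emph{arbitrary} (not $C$-computable) instances $g$: given a bifamily $\Hcal$ that is $C$-\hyper, one must produce a $g$-free $H$ with $\Hcal$ still $C\oplus G \oplus H$-\hyper. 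The only thing computable truth gives is that \emph{some} $g$-free set $H$ is $g$-computable; but here $g$ is $\Delta^0_2(C\oplus Z)$ and not $\leq_T C\oplus Z$, so $C\oplus Z\oplus H$ can be far above $C\oplus Z$, and there is no reason for \propertyL\ to survive. Note the paper is careful on exactly this point: in the induction base of Theorem~\ref{thm:fs-n-strongly-preserves-double} it infers from computable truth only that $\fs^1$ \emph{preserves} \propertyL\ (for computable instances), and the strong version is obtained only through Theorem~\ref{thm:strong-preservation-fs-left-trapped} together with Lemma~\ref{lem:avoidance-fs-trapped-to-untrapped}, i.e., a genuine biarray forcing for left-trapped $\fs^1$ (Theorem~\ref{thm:strong-preservation-fs1-left-trapped}, which rests on the combinatorial Lemmas \ref{lem:fs-cohesiveness-strong-preservation} and \ref{lem:fs-with-enough-sets-free}). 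The same phenomenon occurs for $\ts^1_2$, which is computably true and yet is only claimed to \emph{preserve}, not strongly preserve; strong preservation for $\ts^1_4$ needs the full forcing of Theorem~\ref{thm:ts14-strong-preserve-double-immunity}.

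Your proposed second case (``$g\equiv\infty$ on $Z$'') does not close the gap either. First, the paper avoids a case split altogether: it sets $g(\sigma)=0$ when the limit diverges and uses the strengthened cohesiveness from the footnote (eventual escape of $f(\sigma,\cdot)$ past every bound) to thin the $g$-free set into an $f$-free one, so no separate forcing is needed there. Second, the claim that ``almost every finite subset of $X$ disjoint from and above $F$ is automatically $f$-free over $F$'' does not follow from the escape hypothesis: escape controls $f(x,y)$ for $x\in F$ and large $y$, but says nothing about collisions among the newly added elements, e.g.\ $f(y,z)$ landing on another element $w$ already chosen with $y<w<z$. Controlling those collisions is exactly what the left-trapped normal form buys (forcing $f(\sigma)\leq\max\sigma$), and it is the reason the paper routes through Lemma~\ref{lem:avoidance-fs-trapped-to-untrapped} before doing the forcing. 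In short, the $\coh$-reduction in your first paragraph is correct and matches the paper, but the reduction target — strong preservation of \propertyL\ by $\fs^1$ — is the hard theorem, not a triviality, and your sketch does not supply a proof of it.
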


We start with a lemma very similar to Lemma~\ref{lem:ts-n-to-np1-strongly-double-immune},
which establishes a bridge between strong preservation for a principle over $n$-tuples
and preservation for the same principle over $(n+1)$-tuples.

\begin{lemma}\label{lem:fs-n-to-np1-strongly-double-immune}
Fix some $n \geq 1$.
If $\fs^n$ strongly preserves \propertyL, then $\fs^{n+1}$ preserves \propertyL.
\end{lemma}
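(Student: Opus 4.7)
The plan is to follow the template of Lemma~\ref{lem:ts-n-to-np1-strongly-double-immune} for the thin set theorem: use cohesion to pass from $(n+1)$-tuples to $n$-tuples, apply the strong preservation hypothesis at dimension $n$, and then thin the resulting free set back up to dimension $n+1$. Fix a set $C$, a $C$-\hyper\ bifamily $\Hcal$, and a $C$-computable instance $f : [\omega]^{n+1} \to \omega$ of $\fs^{n+1}$. Consider the $C$-computable family $\vec R = \langle R_{\sigma,v} : \sigma \in [\omega]^n,\ v \in \omega\rangle$ with $R_{\sigma,v} = \{y : f(\sigma,y) = v\}$; by Corollary~\ref{cor:coh-preserves-double-immunity}, choose an $\vec R$-cohesive $G$ with $\Hcal$ being $C \oplus G$-\hyper. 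Cohesion puts each $\sigma \in [G]^n$ into one of two cases: a \emph{limit case}, with a unique $v_\sigma$ such that $f(\sigma,y) = v_\sigma$ for almost all $y \in G$, or a \emph{divergent case}, in which $f(\sigma,y)$ for $y \in G$ cofinitely avoids every prescribed finite set.

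Define an instance $\tilde f : [\omega]^n \to \omega$ of $\fs^n$ (viewing $\omega$ as $G$ through its increasing enumeration) by $\tilde f(\sigma) = v_\sigma$ in the limit case and $\tilde f(\sigma) = \min \sigma$ in the divergent case; the latter choice trivializes the free-set constraint there. By strong preservation of \propertyL\ for $\fs^n$, applied to the (in general non-computable) instance $\tilde f$, there is an infinite $\tilde f$-free $\tilde H \subseteq G$ with $\Hcal$ being $C \oplus G \oplus \tilde H$-\hyper. In particular, for every $\sigma \in [\tilde H]^n$ in the limit case with $v_\sigma \notin \sigma$, $\tilde f$-freeness forces $v_\sigma \notin \tilde H$.

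Thin $\tilde H$ to an infinite $f$-free $H = \{x_0 < x_1 < \cdots\} \subseteq \tilde H$ computably in $G \oplus \tilde H$, maintaining a finite \emph{commitment set} $A_i$. At stage $i+1$, pick $x_{i+1}$ to be the least element of $\tilde H$ past $x_i$ such that $f(\sigma, x_{i+1}) \notin (\{x_0,\dots,x_i\} \cup A_i) \setminus \sigma$ for every $\sigma \in [\{x_0,\dots,x_i\}]^n$; then update $A_{i+1} = A_i \cup \{f(\sigma, x_{i+1}) : \sigma \in [\{x_0,\dots,x_i\}]^n,\ f(\sigma, x_{i+1}) \notin \sigma\}$. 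Such $x_{i+1}$ exists on a cofinite tail of $\tilde H$: in the limit case with $v_\sigma \in \sigma$, the constraint is automatic on the tail where $f(\sigma,y) = v_\sigma$; in the limit case with $v_\sigma \notin \sigma$, $\tilde f$-freeness places $v_\sigma$ outside $\tilde H$ and hence outside the (finite) forbidden set; in the divergent case, $f(\sigma,y)$ cofinitely avoids every fixed finite set by cohesion.

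To verify $f$-freeness of $H$, take $\tau = \sigma \cup \{y\} \in [H]^{n+1}$ with $y = \max \tau$. The stage rule excludes $f(\sigma,y) \in (H \cap [0,y]) \setminus \sigma$, and the commitment set updates exclude $f(\sigma,y) \in H \cap (y,\infty) \setminus \sigma$; hence $f(\sigma,y) \in H$ forces $f(\sigma,y) \in \sigma \subseteq \tau$. Since $H \leq_T G \oplus \tilde H$, $\Hcal$ is $C \oplus H$-\hyper. The main technical step is the thinning and its bookkeeping of commitments; the cohesive guarantee of cofinite slack at each stage makes this a routine construction rather than the core combinatorial difficulty, which really lies in the strong preservation hypothesis at dimension $n$.
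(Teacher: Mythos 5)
Your approach is exactly the one the paper uses: apply $\coh$ to the sections $R_{\sigma,v} = \{y : f(\sigma,y) = v\}$, collapse $f$ to an $n$-ary instance along the cohesive set (the paper sets the divergent value to $0$ rather than $\min\sigma$, but both trivialize the free-set constraint there), invoke strong preservation at dimension $n$, then thin back up. The paper leaves the thinning as a one-line remark, whereas you write it out explicitly; that is worth doing, but your bookkeeping has a genuine bug.

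The problem is with the commitment set $A_i$. You populate $A_{i+1}$ with the values $f(\sigma, x_{i+1}) \notin \sigma$ and claim this ``excludes $f(\sigma,y) \in H \cap (y,\infty) \setminus \sigma$.'' But your selection rule at stage $j$ only demands $f(\sigma', x_j) \notin (\{x_0,\dots,x_{j-1}\} \cup A_{j-1}) \setminus \sigma'$; it never demands $x_j \notin A_{j-1}$. So there is nothing stopping a later $x_j$ from equalling some $f(\sigma, x_{i+1}) > x_{i+1}$ that was placed in $A_{i+1}$, and then $f$-freeness fails: $f(\sigma, x_{i+1}) = x_j \in H$ but $x_j \notin \sigma \cup \{x_{i+1}\}$. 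The commitment set is written to but never read in the way you need. The fix is easy and keeps everything else intact: require additionally that $x_{i+1} \notin A_i$. Since $A_i$ is finite, this only discards finitely many candidates, so your ``cofinite tail'' argument (limit case with $v_\sigma \in \sigma$, limit case with $v_\sigma \notin \sigma$ using $\tilde f$-freeness to get $v_\sigma \notin \tilde H$, divergent case using cohesion) still produces a valid $x_{i+1}$ at every stage, and the verification then goes through as you stated it.
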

\begin{proof}
Let $\Hcal$ be a  \hyper\ family, and $f : [\omega]^{n+1} \to \omega$ be a  computable
instance of $\fs^{n+1}$. Let $\vec{R} = \langle R_{\sigma,i} : \sigma \in [\omega]^n, i \in \omega \rangle$
be the  computable family of sets defined for every $\sigma \in [\omega]^n$ and $i \in \omega$ by
$$
R_{\sigma,i} = \{ x \in \omega : f(\sigma,x) = i \}.
$$
Since $\coh$ preserves \propertyL\ (Corollary~\ref{cor:coh-preserves-double-immunity}), there
is an $\vec{R}$-cohesive set~$G$
\footnote{Here $G$ is $\v R$-cohesive iff for every $\sigma\in [\omega]^n$:
either $\lim_{x\in G}f(\sigma,x)$ exists, or
$\{f(\sigma,x) = i:x\in G\}$ is finite for all $i\in\omega$.
We are not using exactly Corollary~\ref{cor:coh-preserves-double-immunity}, but a similar
proof applies here.} such that $\Hcal$ is $ G$-\hyper.
Let $g : [\omega]^n \to \omega$ be the instance of $\fs^n$ defined for every $\sigma \in [\omega]^n$ by
$$
g(\sigma) = \cond{
	\lim_{x \in G} f(\sigma, x) & \mbox{ if it exists }\\
	0 & \mbox{ otherwise } }
$$
By strong preservation of $\fs^n$, there is an infinite $g$-free set $H\subseteq G$
such that $\Hcal$ is $ G \oplus H$-\hyper.
By thinning out the set $H$, we obtain an infinite $G \oplus H$-computable $f$-free set $\tilde{H}\subseteq H$.
In particular, $\Hcal$ is $\tilde{H}$-\hyper.
\end{proof}

We shall define a particular kind of function called \emph{left trapped function}.
The notion of trapped function
was introduced by Wang in~\cite{Wang2014Some} to prove that~$\fs$ does not imply~$\aca$
over~$\omega$-models. It was later reused by the second author in~\cite{PateyCombinatorial,Patey2016weakness}.

\begin{definition}
A function $f : [\omega]^n \to \omega$ is \emph{left (resp. right) trapped}
if for every $\sigma \in [\omega]^n$, $f(\sigma) \leq \max\sigma$ (resp. $f(\sigma) >\max\sigma$).
\end{definition}

The following lemma is a particular case of a more general statement
proven by the second author in~\cite{PateyCombinatorial}.
It follows from the facts that $\fs^n$ for right trapped functions
is strongly computably reducible to the diagonally non-computable principle ($\dnr$),
which itself is strongly computably reducible to~$\fs^n$ for left trapped functions.

\begin{lemma}[Patey in~\cite{PateyCombinatorial}]\label{lem:avoidance-fs-trapped-to-untrapped}
For each $n \geq 1$, if $\fs^n$ for left trapped functions (strongly) preserves \hyper, then so does $\fs^n$.
\end{lemma}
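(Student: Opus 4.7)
The plan is to exploit the two strong computable reductions cited in the statement: $\fs^n$ for right trapped functions $\leq_{sc} \dnr \leq_{sc} \fs^n$ for left trapped functions. Since strong computable reducibility between two problems $\Psf$ and $\Qsf$ with $\Psf \leq_{sc} \Qsf$ transports (strong) preservation of \propertyL\ from $\Qsf$ to $\Psf$ in the obvious way (a $\Psf$-instance $I$ is effectively mapped to a $\Qsf$-instance $J$, any solution $G$ to $J$ computes a solution $G'$ to $I$, and $\Hcal$ being $C \oplus G$-\hyper\ implies $\Hcal$ is $C \oplus G'$-\hyper\ by downward closure of 2-hyperimmunity under Turing reducibility), assuming $\fs^n$ for left trapped functions (strongly) preserves \propertyL\ immediately yields that $\dnr$ does, and hence that $\fs^n$ for right trapped functions does as well.

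Next I would reduce the general case to these two trapped cases by splitting an arbitrary instance $f : [\omega]^n \to \omega$ into a left trapped part $f_L$ and a right trapped part $f_R$. Concretely, set $f_L(\sigma) = f(\sigma)$ whenever $f(\sigma) \leq \max \sigma$ and $f_L(\sigma) = \max \sigma$ otherwise, and set $f_R(\sigma) = f(\sigma)$ whenever $f(\sigma) > \max \sigma$ and $f_R(\sigma) = \max \sigma + 1$ otherwise. Both $f_L$ and $f_R$ are computable from $f$, so if we are proving preservation (resp.\ strong preservation) we can invoke (strong) preservation of \propertyL\ for left trapped functions on $f_L$ to obtain an infinite $f_L$-free set $H_1$ with $\Hcal$ still $C \oplus H_1$-\hyper, and then apply (strong) preservation for right trapped functions to the restriction of $f_R$ to $[H_1]^n$ to produce an infinite $f_R$-free set $H_2 \subseteq H_1$ with $\Hcal$ still $C \oplus H_2$-\hyper.

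It remains to check that $H_2$ is $f$-free. For any $\sigma \in [H_2]^n$, either $f(\sigma) \leq \max \sigma$, in which case $f(\sigma) = f_L(\sigma)$ and $f_L$-freeness of $H_1 \supseteq H_2$ (together with the immediate fact that subsets of free sets are free) gives $f(\sigma) \in H_2 \imp f(\sigma) \in \sigma$; or $f(\sigma) > \max \sigma$, in which case $f(\sigma) = f_R(\sigma)$ and $f_R$-freeness of $H_2$ gives the same conclusion.

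The step that requires the most care is the implicit verification that these reductions indeed preserve \propertyL. The key fact needed is that if $J = \Gamma^I$ for a fixed Turing functional $\Gamma$ and any solution $G$ to $J$ yields a solution $\Delta^G$ to $I$, then $\Delta^G \leq_T C \oplus G$, so $\Hcal$ being $C \oplus G$-\hyper\ automatically implies $\Hcal$ is $C \oplus \Delta^G$-\hyper; this is routine. The only slight subtlety is the order of application in the splitting argument: $f_R$ restricted to $[H_1]^n$ must itself be treated as a (right trapped) $\fs^n$-instance of the appropriate computational level (computable from $f \oplus H_1$ in the non-strong case, arbitrary in the strong case), which is why we need (strong) preservation matching the assumption in order for the two-stage argument to close up cleanly.
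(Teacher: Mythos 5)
Your proof is correct and takes essentially the same route the paper indicates: the paper presents this lemma as a consequence of the strong computable reduction chain $\fs^n$ for right trapped functions $\leq_{sc} \dnr \leq_{sc} \fs^n$ for left trapped functions established in the cited reference, together with the standard splitting of an arbitrary instance into a left-trapped part and a right-trapped part, which is precisely the argument you reconstruct (and you correctly handle the iteration of oracles and the downward closure of 2-hyperimmunity that makes the two-stage application close up).
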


It therefore suffices to prove strong preservation
of \propertyL\ for $\fs^1$ restricted to left trapped functions.
We first prove a technical lemma thinning out colors while preserving
\propertyL.
Fix a  set $C$, an infinite set $X\leq_T C$,
a $C$-\hyper\ bifamily $\Hcal$
and a left trapped coloring $f : [\omega]^n \to \omega$.
\begin{lemma}\label{lem:fs-cohesiveness-strong-preservation}
Assume $\fs^s$ strongly preserves \propertyL\ for each
  $0\leq s<n$.
There exists an infinite set  $Y\subseteq X$ such that $\Hcal$ is $Y \oplus C$-\hyper,
and for every $\sigma \in [Y]^{<\omega}$ such that $0 \leq \card{\sigma} < n$,
 $$
(\forall x \in Y \setminus \sigma)(\exists b)(\forall \tau \in [Y \cap (b, +\infty)]^{n-|\sigma|})
  f(\sigma, \tau) \neq x.
$$
\end{lemma}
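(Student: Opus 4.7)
The plan is to build $Y$ by a Mathias forcing argument patterned on Proposition~\ref{thm:gen-coh-double-immmunity}, with the uses of strong preservation of the thin-set theorem replaced by strong preservation of the free-set theorem. For notational convenience, I assume $C=\emptyset$ and $X=\omega$. Conditions will be Mathias conditions $(F,Z)$ such that $\Hcal$ is $Z$-\hyper, and a sufficiently generic filter will yield the infinite set $Y=\bigcup_s F_s$ we seek.

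I would prove two extension lemmas. The first is directly analogous to Lemma~\ref{lem:mathias-double-immunity}: for every Turing functional index $e$, any condition has an extension forcing the requirement that $\Phi^{Y}_e$, if total, meets $\Hcal$. The proof is the standard finite-extension argument exploiting the \propertyL\ of $\Hcal$ against the partial computable biarray defined from $\Phi_e$. The second is the \emph{freeness} lemma: for every condition $(F,Z)$ and every $\sigma \subseteq F$ with $1 \leq |\sigma| < n$, there is an extension $(F,\tilde Z)$ such that $\tilde Z$ is $g_\sigma$-free, where $g_\sigma(\tau):=f(\sigma,\tau)$ is the $(n-|\sigma|)$-ary slice of $f$. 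Since $1 \leq n-|\sigma| < n$, the hypothesis supplies strong preservation of $\fs^{n-|\sigma|}$, which, applied to $g_\sigma$ viewed on $[Z]^{n-|\sigma|}$, delivers such a $\tilde Z$ while keeping $\Hcal$ 2-hyperimmune relative to it.

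Combining these two families of extensions via genericity, the final $Y$ enjoys the desired property: for each $\sigma \in [Y]^{<\omega}$ with $1 \leq |\sigma| < n$ and each $x \in Y \setminus \sigma$, picking a stage $s$ at which $\sigma \subseteq F_s$ and the freeness lemma for $\sigma$ has been applied, for $b > \max(\sigma \cup \{x\})$ every $\tau \in [Y \cap (b,+\infty)]^{n-|\sigma|}$ lies in the refined reservoir, and $g_\sigma$-freeness combined with $x \notin \tau$ forces $g_\sigma(\tau) \neq x$. The remaining case $|\sigma|=0$ reduces uniformly to the $|\sigma|=1$ case: for $x \in Y$ and any $\tau=\{y_0\}\cup\tau' \in [Y \cap (x,+\infty)]^n$ with $y_0=\min\tau$, the $g_{\{y_0\}}$-freeness of $Y$ applied with $x \notin \tau'$ gives $f(\tau)=g_{\{y_0\}}(\tau') \neq x$, so $b=x$ works.

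The hard part will be to cleanly handle the case where $x$ lies in the committed finite part $F$ of the current condition rather than in the reservoir $\tilde Z$: in that case $x \notin \tilde Z$, and pure $g_\sigma$-freeness of $\tilde Z$ does not rule out $g_\sigma(\tau)=x$ for cofinally many $\tau \in [\tilde Z]^{n-|\sigma|}$. My plan to resolve this is to strengthen the notion of forcing condition by incorporating the invariant that $f(\sigma,\tau) \neq x$ for all $\sigma \subseteq F$, every $x \in F \setminus \sigma$, and every $\tau \in [Z]^{n-|\sigma|}$, and to preserve this invariant when appending a new element of $Z$ to $F$ through a further application of $\fs^{n-|\sigma|}$-style thinning, so that the newly committed element is absorbed into the free structure on the reservoir.
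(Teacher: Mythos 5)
Your approach takes a genuinely different route from the paper, and it has a gap that you correctly identify but whose proposed fix does not resolve.

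The paper does not run a Mathias forcing here at all. It first invokes Proposition~\ref{thm:gen-coh-double-immmunity} to obtain a set $X_0\subseteq X$ (with $\Hcal$ still $X_0$-\hyper) together with, for each $\sigma$ with $|\sigma|<n$, a \emph{finite} set $I_\sigma$ of size at most $d_{n-|\sigma|}$ containing every color that $f(\sigma,\cdot)$ attains cofinally on $X_0$. It then defines, for each $s<n$ and $i<d_{n-s}$, a coloring $f_{s,i}$ \emph{on $s$-tuples $\sigma$} by letting $f_{s,i}(\sigma)$ be the $i$th element of $I_\sigma$, and applies strong preservation of $\fs^s$ to these $f_{s,i}$. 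The freeness that matters is on the \emph{first} coordinate: if $x\in Y\setminus\sigma$ were a limit color of $f(\sigma,\cdot)$, then $x\in I_\sigma$, so $x=f_{s,i}(\sigma)$ for some $i$, and $f_{s,i}$-freeness of $Y$ then forces $x\in\sigma$, a contradiction. You instead apply $\fs^{n-|\sigma|}$ to $g_\sigma(\tau)=f(\sigma,\tau)$ on the \emph{second} coordinate and refine the reservoir to be $g_\sigma$-free.

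The gap you flag is genuine and, as far as I can see, fatal to the proposal as written. $g_\sigma$-freeness of the reservoir $Z$ constrains $g_\sigma(\tau)$ only against elements of $Z$; it says nothing when $x$ lies in the finite committed part $F$. Your repair, namely to carry the invariant ``$f(\sigma,\tau)\neq x$ for all $\sigma\subseteq F$, $x\in F\setminus\sigma$ and $\tau\in[Z]^{n-|\sigma|}$'', is essentially clause~(b) of the condition used later in Theorem~\ref{thm:strong-preservation-fs-left-trapped} — but that theorem \emph{presupposes} Lemma~\ref{lem:fs-cohesiveness-strong-preservation}: the very invariant-maintenance step (Lemma~\ref{lem:fsn-extension}, item~(1)) appeals to equation~(\ref{fstseq7}), which is exactly the conclusion of the present lemma. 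Concretely, when you append a fresh $y$ from the reservoir to $F$, the case $\sigma'=\sigma\cup\{y\}$ with $x\in F\setminus\sigma$ asks you to pass to a reservoir $Z'$ on which $f(\sigma',\tau)\neq x$ for all $\tau\in[Z']^{n-|\sigma'|}$. Neither $g_{\sigma'}$-freeness of $Z'$ (since $x\notin Z'$) nor any iteration of $\fs$ on the $\tau$-coordinate gives this; it is an eventual-avoidance property of $f(\sigma',\cdot)$, i.e.\ exactly what the lemma is asserting. So the fix is circular. By contrast, the paper's use of cohesiveness to bound the possible limit colors by the finite set $I_\sigma$, and then applying $\fs^s$ to $\sigma\mapsto$ $i$th limit color, is precisely what allows the problematic values to be excluded from $Y$ once and for all, with no forcing needed in this lemma.

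A secondary issue: your reduction of the $|\sigma|=0$ case to $|\sigma|=1$ also quietly assumes that the \emph{entire} final set $Y$ is $g_{\{y_0\}}$-free uniformly in $y_0$, but under your forcing scheme each reservoir is made $g_{\{y_0\}}$-free only after $y_0$ enters $F$, so the elements of $\tau'$ need not lie in that refined reservoir. This is again a symptom of the same underlying issue, and it disappears entirely under the paper's non-forcing argument.
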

\begin{proof}
 By Proposition~\ref{thm:gen-coh-double-immmunity}
 and since
 $\ts^s_{d_s+1}$ strongly preserves \propertyL\ for all  $s\in\omega$
 (Theorem \ref{thm:ts-n-strongly-preserves-double}), there exists a set
$X_0\subseteq X $ with $\mcal{H}$ being $  X_0$-\hyper\
 such that for all $\sigma\in [\omega]^{<\omega}$ with $|\sigma|<n$, there exists $I_\sigma$ with
$|I_\sigma|\leq d_{n-|\sigma|}$ such that for every $x\notin I_\sigma$,
$$
(\exists b)(\forall \tau\in [X_0\cap (b,\infty)]^{n-|\sigma|})
f(\sigma,\tau)\ne x.
$$

For each $s < n$ and $i < d_{n-s}$, let $f_{s,i} : [\omega]^s \to \omega$ be the coloring
such that $f_{s,i}(\sigma)$ is the  $i$th element of $I_\sigma$.
By strong preservation of $\fs^s$   for each
  $0\leq s<n$,
  there is an infinite
  set $Y\subseteq X_0$ such that $Y$ is $f_{s,i}$-free for all $0\leq s<n,i<d_{n-s}$
  and $\Hcal$ is $Y$-\hyper.

  We prove that $Y$ is the desired set.
   Fix $s < n$,   $\sigma \in [Y]^s$
   and $x \in Y \setminus \sigma$.
If $(\forall b)(\exists \tau \in [Y \cap (b, +\infty)]^{n - s})f(\sigma, \tau) = x$,
then by choice of $X_0$ (and $Y\subseteq X_0$), there exists an $i < d_{n - s}$ such that
$f_{s, i}(\sigma) = x$, contradicting $f_{s,i}$-freeness of $Y$.
So $(\exists b)(\forall \tau \in [Y \cap (b, +\infty)]^{n - s})f(\sigma, \tau) \neq x$.
\end{proof}

\begin{theorem}\label{thm:strong-preservation-fs1-left-trapped}
$\fs^1$ for left trapped functions strongly preserves \propertyL.
\end{theorem}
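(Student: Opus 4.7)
The plan is to prove strong preservation of \propertyL\ for $\fs^1$ restricted to left trapped functions by a Mathias-style forcing paralleling Theorem~\ref{thm:ts14-strong-preserve-double-immunity} (the $\ts^1_4$ case) and Lemma~\ref{lem:mathias-double-immunity}. Given a set $C$, a $C$-\hyper\ bifamily $\Hcal$, and an arbitrary left trapped $f : \omega \to \omega$, the goal is an infinite $f$-free set $G$ such that $\Hcal$ remains $C \oplus G$-\hyper. First we preprocess by applying Lemma~\ref{lem:fs-cohesiveness-strong-preservation} at $n = 1$ (where the hypothesis on $\fs^s$ for $0 \leq s < 1$ is vacuous), obtaining an infinite $Y_0$ with $\Hcal$ still $C \oplus Y_0$-\hyper\ such that, for every $x \in Y_0$, the preimage $\{y \in Y_0 : f(y) = x\}$ is finite.

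The forcing conditions are pairs $(F, X)$ where $F$ is a finite $f$-free set, $X \subseteq Y_0$ is infinite with $F < X$ and $f(y) \notin F$ for every $y \in X$, and $\Hcal$ is $C \oplus X$-\hyper. Because $f$ is left trapped, the clause $f(y) \notin F$ for $y \in X$ together with $f$-freeness of $F$ makes $F \cup E$ automatically $f$-free whenever $E \subseteq X$ is itself $f$-free. To grow the finite part, we set $y = \min X$ and $X' = (X \cap (y, \infty)) \setminus f^{-1}(y)$; the preprocessing step guarantees $X'$ is infinite, \propertyL\ is preserved under finite modification of $X$, and $(F \cup \{y\}, X')$ is a valid extension of $(F, X)$. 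Iterating along a sufficiently generic filter thus yields an infinite $f$-free set $G$.

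To force each requirement ``$\Phi_e^G$ is partial or meets $\Hcal$'' we run the standard three-case analysis from Lemma~\ref{lem:ts14-strong-preserve-double-immunity-force-Q}: define a partial $C$-computable biarray by searching for finite $U_r > r$ and $V_{r,m} > m$ such that, for every hypothetical coloring $g$ in a suitable $\Pi^0_1$ class, there is a finite $g$-free $E \subseteq X$ with $\Phi_e^{F \cup E}(r;1) \downarrow \subseteq U_r$ (and, for $V_{r,m}$, additionally $\Phi_e^{F \cup E}(r, m;2) \downarrow \subseteq V_{r,m}$). If some $U_r$ is never found, a $\wkl$-preservation argument (Corollary~\ref{cor:wkl-double-immunity}) produces an infinite $X_0 \subseteq X$ against which divergence of $\Phi_e^G(r;1)$ is forced while preserving \propertyL; the $V_{r,m}$ case is symmetric; if both are always found, \propertyL\ of $\Hcal$ gives $r,m$ with $(U_r, V_{r,m}) \in \Hcal$, and taking $g = f$ in the definition of $V_{r,m}$ supplies the witnessing $E$ yielding the desired extension.

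The main obstacle is pinning down the correct sufficiency notion for the searches defining $U_r$ and $V_{r,m}$, since we do not compute $f$ and the witnesses must behave uniformly across the $\Pi^0_1$ class produced by compactness. For left trapped $\fs^1$ this is considerably lighter than the analogous sufficiency for $\ts^n_k$ (compare Definition~\ref{def:tssuff}): left trappedness together with the preprocessed $Y_0$ ensures that any finite $g$-free subset of $X$ extends to an infinite $g$-free subset by merely discarding the finite preimage of its elements, so it suffices to search for a single such $E$ rather than a \tssufficient\ tuple of collections. Once sufficiency is in place, the forcing lemmas and the passage to a generic filter proceed as in the $\ts^1_4$ proof.
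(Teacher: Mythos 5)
Your overall architecture is right — preprocessing via Lemma~\ref{lem:fs-cohesiveness-strong-preservation}, Mathias forcing with conditions $(F,X)$ enforcing $f$-freeness and \propertyL\ of the reservoir, a partial computable biarray $(U_r, V_{r,m})$, and a three-case analysis. But the sufficiency notion you propose for the $U_r$ search is too weak, and that is exactly where the content of the theorem lives.

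You claim it suffices to search for a \emph{single} finite $g$-free $E$ with $\Phi_e^{F\cup E}(r;1)\downarrow\subseteq U_r$, on the grounds that ``left trappedness together with the preprocessed $Y_0$ ensures that any finite $g$-free subset of $X$ extends to an infinite $g$-free subset by merely discarding the finite preimage of its elements.'' This is incorrect. The preprocessing makes $f^{-1}(x)$ finite along $Y_0$, but says nothing about the preimages of an \emph{adversary} coloring $g$ produced by the compactness/$\wkl$ step in Case~2. Such a $g$ is an arbitrary left-trapped coloring in a $\Pi^0_1$ class, and $g^{-1}(E)$ can be cofinite in any candidate reservoir; e.g.\ if $g\equiv 1$ off $\{0\}$ and $E=\{1\}$, no infinite $g$-free set contains $1$, so a single witness $E$ may be incompatible with every usable reservoir, and the extension step of Case~2 cannot be completed.

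The paper's proof handles this by requiring, in the definition of $U_n$, a \emph{pair of disjoint} finite sets $E_0,E_1$ that are simultaneously $g$-free and $h$-free for every pair of left-trapped colorings $g,h$. In Case~2 one takes $h=f$, obtains $E_0,E_1$ both $g$-free and $f$-free, and then runs a pigeonhole argument: consider the $2$-partition sending $x$ to color $i$ when $g(x)\in E_i$ (defaulting to $0$); since $\ts^1_2$ is computably true it preserves \propertyL, giving an infinite $\tilde Y\subseteq Y$ with $\Hcal$ still $\tilde Y$-\hyper\ and $g(\tilde Y)\cap E_i=\emptyset$ for some $i$. Because $g$ is left trapped and $\tilde Y > E_i$, the set $E_i\cup\tilde Y$ is $g$-free, and $(E_i,\tilde Y)$ is a legal precondition forcing divergence of $\Phi_e^{G}(r,m;2)$. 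The disjointness of $E_0,E_1$ is what makes the $2$-partition well-defined; this is precisely why the remark after Definition~\ref{def:fssuff} says a $1$-\fssufficient\ collection must contain two disjoint sets. Quantifying over the pair $(g,h)$ rather than a single coloring is likewise essential: your version searches against a single $g$ and later substitutes $g=f$, which yields an $f$-free witness but loses the adversary's $g$-freeness constraint needed to apply the $\Pi^0_1$ class property.

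Case~1 has an analogous subtlety your sketch glosses over: the $\Pi^0_1$ class there only guarantees that for every disjoint pair $E_0,E_1$ that are $g$-free and $h$-free, at least one computation diverges. One then picks the (at most one) offending $E_0$ with $\Phi_e^{E_0}(r;1)\downarrow$, if it exists, and truncates the reservoir above $\max E_0$; divergence for all generics is then forced by disjointness. Saying ``a $\wkl$-preservation argument produces an infinite $X_0\subseteq X$ against which divergence is forced'' hides exactly this step.
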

\begin{proof}
Fix some  \hyper\ bifamily $\Hcal$,
and a left trapped coloring~$f : \omega \to \omega$.
By Lemma \ref{lem:fs-cohesiveness-strong-preservation},
we assume
\begin{align}\label{fstseq6}
(\forall x \in\omega)
  (\exists b)(\forall y \in  (b, +\infty))f(y) \neq x.
  \end{align}

We will construct an infinite $f$-free set~$G$ such that $\Hcal$ is $G  $-\hyper\ by forcing.
Our forcing \emph{conditions} are Mathias conditions $(F, X)$ such that
\begin{itemize}
  \item[(a)] $\Hcal$ is $X $-\hyper
  \item[(b)] $(\forall x \in F \cup X) f(x) \not \in F \setminus \{x\}$.
\end{itemize}
A Mathias condition $(F,X)$ is a \emph{precondition} if
it satisfies (a) and $F$ is $f$-free.
Clearly  $(\emptyset, \omega)$ is a condition.
 It's easy to see that:
 \begin{lemma}\label{lem:fs1-extension}
 \
 \begin{enumerate}
 \item Every precondition can be extended to a condition.
 \item For every condition  $(F,X)$, every finite $f$-free set $E\subseteq X$ with $E>F$,
 $(F\cup E, X)$ is a precondition.

 \end{enumerate}

 \end{lemma}
\begin{proof}
Item (1) follows from (\ref{fstseq6}).
For item (2): Since $f$ is left trapped,
for every $x\in F$, $f(x)\notin E$.
Combined with $f$-freeness of $F$,  $f(x)\notin (F\cup E)\setminus \{x\}$.
Since $E$ is $f$-free, for every $x\in E$,
$f(x)\notin E\setminus\{x\}$.
Combined with  property (b) of a condition,
$f(x)\notin (F\cup E)\setminus \{x\}$.

\end{proof}
\begin{lemma}\label{lem:fs1-left-trapped-preserves-1}
For every condition $(F, X)$ there exists an extension
$(E, Y)$ such that $|E| > |F|$.
\end{lemma}
\begin{proof}
Pick any $x \in X$ with $x>F$.
Clearly $\{x\}$ is $f$-free.
Thus the conclusion follow from Lemma
 \ref{lem:fs1-extension}.
\end{proof}

For every $e \in \omega$, we want to satisfy the requirement
\begin{align}\nonumber
\Rcal_e: \text{If $\Phi^{  G}_e$ is total,
then $\Phi^{  G}_e$ meets $\Hcal$.}
\end{align}

\begin{lemma}\label{lem:fs1-left-trapped-preserves-2}
For every condition $c$ and every index $e$,
there is an extension $d$ of $c$ forcing $\Rcal_e$.
\end{lemma}
\begin{proof}

Fix a condition $c = (F, X)$. By Lemma \ref{lem:fs1-extension},
for notational convenience, assume
$F=\emptyset, X=\omega$.
 We define a partial computable biarray as follows.
\bigskip

\emph{Defining $U_n$}.
Given $n \in \omega$, search  computably for some
finite   set $U_n>n$ (if it exists) such that
for every pair of left trapped colorings $g, h : \omega \to \omega$,
there is a pair of disjoint finite sets $E_0, E_1  $ which are both $g$-free and $h$-free such that
for each $i < 2$,
$$
\Phi_e^{ E_i }(n;1) \downarrow \subseteq U_n
$$

\emph{Defining $V_{n,m}$}. Given $n, m \in \omega$, search  computably for some
finite  set $V_{n,m}>m$ (if it exists) such that for every
left trapped coloring $g : \omega \to \omega$,
there is some $g$-free finite set $E$ such that
$$
\Phi_e^{ E}(n;1) \downarrow \subseteq U_n
	\wedge \Phi_e^{ E}(n, m;2) \downarrow \subseteq V_{n,m}
$$
\smallskip

We now have multiple outcomes, depending on which $U_n$ and $V_{n,m}$ is found.
\begin{itemize}
	\item Case 1: $U_n$ is not found for some $n \in \omega$. By compactness, the following $\Pi^{0}_1$ class
	$\Pcal$ of  pairs of left trapped colorings $g, h : \omega \to \omega$ is nonempty:
	 for every pair of pairwise disjoint finite sets $E_0, E_1$
	which are both $g$-free and $h$-free, there is some $i < 2$ such that
	$\Phi_e^{ E_i}(n;1) \uparrow$.

	As $\wkl$ preserves \propertyL\
	(Corollary~\ref{cor:wkl-double-immunity}), there is a member $g, h$
	of $\Pcal$ such that
	$\Hcal$ is $g \oplus h $-\hyper.
	There is an infinite $g\oplus h$-computable set $Y$ which is both
	$g$-free and $h$-free.
	Let $E_0 $ (if it exists) be   $g$-free, $h$-free
and $\Phi_e^{E_0}(n;1) \downarrow$; let $b = \max E_0$ (or $b=0$ if $E_0$ does not exist).
	Clearly condition  $d = (F, Y \setminus [0,b])$ is an extension of $c$.
For every $G$ satisfying $d$, $G$ is both $g$-free and $h$-free,
so $\Phi_e^G(n;1)\uparrow$. Thus $d$ forces $\Rcal_e$.
	\bigskip

	\item Case 2: $U_n$ is found, but not $V_{n,m}$ for some $n, m \in \omega$. By compactness,
	the following $\Pi^{0}_1$ class $\Pcal$ of left trapped colorings $g : \omega \to \omega$
is nonempty:
	 for every $g$-free set $E$,
	\begin{align}\label{fsts-fs1strongpreserve-defofg}
	\Phi_e^{ E}(n;1) \downarrow \subseteq U_n
		\Rightarrow \Phi_e^{ E}(n, m;2) \uparrow.
	\end{align}
	As $\wkl$ preserves \propertyL\
	(Corollary~\ref{cor:wkl-double-immunity}), there is a member $g$
	of $\Pcal$ such that $\Hcal$ is $g$-\hyper.
	There is an infinite  $g$-computable
 $g$-free set $Y$.
	By definition of $U_n$ (where we take $h = f$),
	there is a pair of disjoint finite sets $E_0, E_1 $
	which are both $g$-free and $f$-free, and such that for every $i < 2$,
	$$
	\Phi_e^{E_i}(n;1) \downarrow \subseteq U_n.
	$$
Consider the 2-partition $A_0\sqcup A_1$ of $\omega$
defined by $x\in A_i$ if $g(x)\in E_i$ and $x\in A_0$ if $g(x)\notin \cup_{i<2}E_i$.	
 Since  $\ts^1_2$ is computably true, hence preserves \propertyL,
	there is an $i < 2$ and an infinite set $E_i<\t Y \subseteq Y$ such that $\Hcal$
	is $\t Y $-\hyper\ and $g(\t Y)\cap E_i=\emptyset$.
We claim that $E_i \cup \t Y$ is $g$-free.
	Indeed, $E_i$ and $\t Y$ are both $g$-free; since $g$ is left trapped,
	$g(E_i) \cap \t Y = \emptyset$; and by choice of $\t Y$,
$g(\t Y) \cap E_i = \emptyset$.

		By $f$-free of $E_i$, $d=(E_i, \t Y  )$ is a precondition extending $c$.
We prove that the   $d$ forces $\Phi_e^{  G}(n,m;2) \uparrow$.
Because   $\Phi_e^{E_i}(n;1)\downarrow\subseteq U_n$ and
$E_i \cup \t Y$ is $g$-free
(so every $G$ satisfying $(E_i,\t Y)$ is $g$-free), by definition of $g$ (namely (\ref{fsts-fs1strongpreserve-defofg})), $d$ forces $\Phi_e^{ G}(n, m;2) \uparrow$.

	\bigskip

	\item Case 3: $U_n$ and $V_{n,m}$ are found for every $n,m \in \omega$.
	By  \propertyL\ of $\Hcal$, there is some $n, m \in \omega$
	such that $(U_n, V_{n,m}) \in \Hcal$.
	In particular, by definition of $V_{n,m}$ (where we take $g = f$),
	there is some $f$-free finite set $E $ such that
	$$
	\Phi_e^{E}(n;1) \downarrow \subseteq U_n
		\wedge \Phi_e^{E}(n, m;2) \downarrow \subseteq V_{n,m}.
	$$
	
		Clearly $(E, X)$ is a  precondition
extending $c$ and forcing $\Rcal_e$.
\end{itemize}
This completes the proof of Lemma~\ref{lem:fs1-left-trapped-preserves-2}.
\end{proof}

Let $\Fcal = \{c_0, c_1, \dots \}$ be a sufficiently generic filter for this notion of forcing,
where $c_s = (F_s, X_s)$, and let $G = \bigcup_s F_s$.
By property (b) of a condition, $G$ is $f$-free.
By Lemma~\ref{lem:fs1-left-trapped-preserves-1}, $G$ is infinite,
and by Lemma~\ref{lem:fs1-left-trapped-preserves-2},
$\Hcal$ is $  G$-\hyper.
This completes the proof of Theorem~\ref{thm:strong-preservation-fs1-left-trapped}.
\end{proof}

\subsection{
 $\fs^n$ preserves \propertyL
}\label{subsect:fsn-preserves}

The purpose of this section is to prove the following theorem.

\begin{theorem}\label{thm:fs-n-strongly-preserves-double}
For every $n \geq 1$, $\fs^n$ strongly preserves \propertyL.
\end{theorem}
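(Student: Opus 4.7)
The plan is to proceed by induction on $n$, mirroring the scheme used for $\ts^n$ in Theorem~\ref{thm:ts-n-strongly-preserves-double}. The base case $n=1$ is immediate: Theorem~\ref{thm:strong-preservation-fs1-left-trapped} gives strong preservation of \propertyL\ by $\fs^1$ restricted to left trapped functions, and Lemma~\ref{lem:avoidance-fs-trapped-to-untrapped} promotes it to strong preservation by $\fs^1$ in general.

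For the inductive step, assume $\fs^s$ strongly preserves \propertyL\ for all $1 \leq s < n$. First, applying Lemma~\ref{lem:fs-n-to-np1-strongly-double-immune} at $s = n-1$ yields that $\fs^n$ preserves \propertyL\ (the non-strong version). The bulk of the argument then lies in Theorem~\ref{thm:strong-preservation-fs-left-trapped}, which asserts strong preservation of \propertyL\ by $\fs^n$ on left trapped functions; its hypotheses are precisely the strong preservation of $\fs^s$ for $s<n$ from the inductive assumption, the preservation of $\fs^n$ just obtained, and $\wkl$-preservation (Corollary~\ref{cor:wkl-double-immunity}). Finally, Lemma~\ref{lem:avoidance-fs-trapped-to-untrapped} lifts strong preservation from the left trapped case to arbitrary instances, closing the induction.

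The main obstacle is the Mathias forcing underlying Theorem~\ref{thm:strong-preservation-fs-left-trapped}. Given a \hyper\ bifamily $\Hcal$ and an arbitrary left trapped $f : [\omega]^n \to \omega$, one first invokes Lemma~\ref{lem:fs-cohesiveness-strong-preservation} to pass to an infinite set on which $f$ behaves tamely, then builds an infinite $f$-free $G$ with $\Hcal$ still $G$-\hyper\ by forcing with (pre)conditions $(F,X)$ where $F$ is $f$-free and $\Hcal$ is $X$-\hyper. The density lemma for each requirement $\Rcal_e$ should follow the template of Lemma~\ref{lem:fs1-left-trapped-preserves-2}: partially computably define $U_r$ and $V_{r,m}$ by searching for an \fssufficient\ collection of finite $f$-free stems on which $\Phi_e$ converges into them, and then split into three cases depending on which of $U_r$ and $V_{r,m}$ is found. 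The delicate case is the middle one, where $U_r$ exists but $V_{r,m}$ does not; here $\wkl$-preservation selects a left trapped coloring $g$ inside the resulting $\Pi^0_1$ class, and an \fssufficient\ analogue of Lemma~\ref{lem:tssuff} must provide a finite $f$-free stem together with an infinite $g$-free tail whose union is $f$-free while preserving \propertyL\ of $\Hcal$. Since freeness is not closed under unions, this requires careful use of the left trapped assumption together with the inductive strong preservation of $\fs^s$ for $s<n$ to ensure that values of $f$ on cross $n$-tuples with some coordinates in the stem and some in the tail cannot land in the stem.
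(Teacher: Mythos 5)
Your proposal is correct and follows essentially the same route as the paper: induction on $n$, with Lemma~\ref{lem:fs-n-to-np1-strongly-double-immune} supplying (non-strong) preservation of $\fs^n$ from strong preservation of $\fs^{n-1}$, Theorem~\ref{thm:strong-preservation-fs-left-trapped} (together with Lemma~\ref{lem:fs-cohesiveness-strong-preservation} and the \fssufficient\ machinery of Lemma~\ref{lem:fs-with-enough-sets-free}) supplying strong preservation for left trapped colorings, and Lemma~\ref{lem:avoidance-fs-trapped-to-untrapped} lifting it to arbitrary colorings. The only cosmetic difference is that you base the $n=1$ case on Theorem~\ref{thm:strong-preservation-fs1-left-trapped} directly, while the paper first notes that $\fs^1$ is computably true to get preservation and then funnels it through the same general Theorem~\ref{thm:strong-preservation-fs-left-trapped}; both treatments are equivalent.
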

\begin{proof}
We prove by induction over $n \geq 1$ that
$\fs^n$ preserves and strongly preserves \propertyL.
If $n = 1$, $\fs^1$ is a computably true statement, that is, every instance
has a solution computable in the instance, so $\fs^1$ preserves \propertyL.
If $n > 1$, then by the induction hypothesis, $\fs^{n-1}$ strongly preserves \propertyL,
so by Lemma~\ref{lem:fs-n-to-np1-strongly-double-immune}, $\fs^n$ preserves \propertyL.
Assuming by the induction hypothesis that $\fs^t$ strongly preserves \propertyL\
for every $t < n$, and that $\fs^n$ preserves \propertyL,
then by Theorem~\ref{thm:strong-preservation-fs-left-trapped},
$\fs^n$ for left trapped functions strongly preserves \propertyL.
By Lemma~\ref{lem:avoidance-fs-trapped-to-untrapped}, if $\fs^n$ for left trapped functions strongly preserves \propertyL,
so does $\fs^n$. This completes the proof.
\end{proof}

We first need to prove a technical lemma which will ensure that the reservoirs of our forcing conditions
will have good properties, so that the conditions will be extensible.
Fix a set $C$, a $C$-\hyper\ bifamily $\Hcal$; a finite set $F$ and an infinite set $X \leq_T C$;
fix a left trapped coloring $f : [\omega]^n \to \omega$.
\begin{lemma}\label{lem:fs-left-trapped-preserves-smaller}
Suppose that $\fs^s$ strongly preserves \propertyL\ for each $0<s<n$.
Then there exists an infinite
set $Y \subseteq X$ such that $\Hcal$ is $Y \oplus C$-\hyper, and for each $0<s<n$,
$$
(\forall \sigma \in [F]^s)(\forall \tau \in [Y]^{n-s})f(\sigma, \tau) \not \in Y \setminus \tau.
$$
\end{lemma}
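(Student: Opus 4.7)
\smallskip

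\noindent\textbf{Proof plan for Lemma~\ref{lem:fs-left-trapped-preserves-smaller}.}
The plan is to iterate strong preservation of $\fs^{n-s}$ for each $0<s<n$ over the finitely many $\sigma\in[F]^{s}$, thinning the reservoir $X$ once per pair $(\sigma,s)$.  Without loss of generality I first replace $X$ by $X\cap(\max F,+\infty)$, so that any $\tau\in[X]^{n-s}$ and any $\sigma\in[F]^{s}$ are automatically disjoint and $f(\sigma,\tau)$ is well defined.

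Enumerate the finitely many pairs $(\sigma_1,s_1),\dots,(\sigma_N,s_N)$ with $\sigma_i\in[F]^{s_i}$ and $0<s_i<n$.  I build a descending sequence $X=Y_0\supseteq Y_1\supseteq\cdots\supseteq Y_N$ of infinite sets such that $\Hcal$ is $C\oplus Y_i$-\hyper\ for every $i\leq N$, and such that $Y_i$ satisfies the desired freeness condition for the $i$ first pairs.  Given $Y_{i-1}$, let $\pi_{i-1}:\omega\to Y_{i-1}$ be the order-preserving bijection (which is $Y_{i-1}$-computable).  Define an instance $h_i:[\omega]^{n-s_i}\to\omega$ of $\fs^{n-s_i}$ by
\[
h_i(\alpha)\;=\;\begin{cases}\pi_{i-1}^{-1}\!\bigl(f(\sigma_i,\pi_{i-1}(\alpha))\bigr) & \text{if }f(\sigma_i,\pi_{i-1}(\alpha))\in Y_{i-1},\\ \max\alpha+1 & \text{otherwise.}\end{cases}
\]
Since $\fs^{n-s_i}$ strongly preserves \propertyL\ by hypothesis, applied with the oracle $C\oplus Y_{i-1}$ there is an infinite $h_i$-free set $H_i$ such that $\Hcal$ is $C\oplus Y_{i-1}\oplus H_i$-\hyper.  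Set $Y_i=\pi_{i-1}(H_i)\subseteq Y_{i-1}$; then $Y_i\leq_T Y_{i-1}\oplus H_i$, so $\Hcal$ remains $C\oplus Y_i$-\hyper.

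The main verification is that $Y_i$ is ``$g_{\sigma_i}$-free inside itself,'' i.e.\ that for every $\tau\in[Y_i]^{n-s_i}$, $f(\sigma_i,\tau)\in Y_i$ implies $f(\sigma_i,\tau)\in\tau$.  Writing $\tau=\pi_{i-1}(\alpha)$ with $\alpha\in[H_i]^{n-s_i}$, if $f(\sigma_i,\tau)\in Y_i\subseteq Y_{i-1}$ then $h_i(\alpha)=\pi_{i-1}^{-1}(f(\sigma_i,\tau))$, and the assumption $f(\sigma_i,\tau)\in Y_i=\pi_{i-1}(H_i)$ gives $h_i(\alpha)\in H_i$.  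By $h_i$-freeness of $H_i$ this forces $h_i(\alpha)\in\alpha$, whence $f(\sigma_i,\tau)=\pi_{i-1}(h_i(\alpha))\in\pi_{i-1}(\alpha)=\tau$, as required.  Moreover this property is preserved under passing to any infinite subset, so all later $Y_j$ ($j\geq i$) still satisfy the freeness condition for $(\sigma_i,s_i)$.

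Setting $Y=Y_N$ then yields an infinite $Y\subseteq X$ with $\Hcal$ being $C\oplus Y$-\hyper\ and the conjunction of all required freeness statements.  The only nontrivial point is the pull-back/push-forward bookkeeping in the verification above; everything else is a routine finite iteration using the induction hypothesis and Lemma~\ref{lem:fs-n-to-np1-strongly-double-immune}-style manipulations.
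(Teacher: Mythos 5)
Your proof takes essentially the same route as the paper's: define a coloring $f_\sigma(\tau)=f(\sigma,\tau)$ for each of the finitely many $\sigma\in[F]^s$ with $0<s<n$, then iterate strong preservation of $\fs^{n-s}$ over these pairs while staying inside the reservoir, and unfold the definition of $f_\sigma$-freeness to read off the conclusion. The paper states this in three lines, implicitly invoking the standard ``apply the principle inside $X$'' step; your version makes that step explicit via the order isomorphism $\pi_{i-1}$ and spells out the preservation bookkeeping. The one cosmetic remark: choosing $\max\alpha+1$ as the default value for $h_i$ introduces a harmless extra constraint on $H_i$ (forcing $H_i$ to avoid certain successors); setting the default to $\min\alpha$ would make the constraint vacuous and match the intent more cleanly, but since strong preservation applies to arbitrary instances your choice is still correct.
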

\begin{proof}

For each $0<s<n$, each $\sigma\in [F]^s$ consider the coloring
$f_\sigma:[\omega]^{n-s}\rightarrow\omega$ defined as $f_\sigma(\tau) = f(\sigma,\tau)$.
Since  $\fs^s$ strongly preserves \propertyL\ for each $0<s<n$,
there is an infinite
set $Y \subseteq X$ such that $\Hcal$ is $Y$-\hyper\
and $Y$ is $f_\sigma$-free for all $0<s<n,\sigma\in [F]^s$.
Unfolding the definition of free, $Y$ is desired.
\end{proof}

We need to prove a second lemma saying that if we have sufficiently many finite free sets,
one of them is extendible into an infinite one.
This generalizes the argument of case 2 of Lemma
\ref{lem:fs1-left-trapped-preserves-2}, where we showed that
for every coloring $g:\omega\rightarrow \omega$ and every pair of disjoint $g$-free finite sets $E_0, E_1$,
 there is an $i<2$ and an infinite set $Y$ such that $E_i\cup Y$ is $g$-free.
\begin{definition}[\fssufficient]
\label{def:fssuff}
A collection $\mcal{F}$ of finite sets is \emph{$n$-\fssufficient} iff for every sequence $(f_{s,i}:[\omega]^s\rightarrow\omega)_{s< n,i<d_{n-s-1}}$
of left trapped colorings, there exists an $F\in\mcal{F}$ such that
$F$ is $f_{s,i}$-free for all $s< n,i<d_{n-s-1}$.
\end{definition}

In particular, for $n = 1$, a collection $\mcal{F}$ of finite sets is 1-\fssufficient\ iff for every left-trapped coloring $f_{0,0} : [\omega]^0 \to \omega$, there exists an $F \in \mcal{F}$ such that $F$ is $f_{0,0}$-free. Note that a coloring $[\omega]^\omega \to \omega$ is nothing but the choice of an element in $\omega$, which means that for every $x \in \omega$, there exists an $F \in \mcal{F}$ such that $x \not \in F$. This is true as long as $\mcal{F}$ contains two disjoint sets.

Let $\mcal{F}$ be a collection of $f$-free finite sets.
\begin{lemma}\label{lem:fs-with-enough-sets-free}
Assume that $\fs^n$ preserves \propertyL\
and that $\fs^s$ strongly preserves \propertyL\ for each $0<s<n$.
Suppose $f\leq_T C$, $\mcal{F}$ is $n$-\fssufficient\ and   $f$-free\footnote{Each member of $\mcal F$ is $f$-free.}.
Then
there is an $F\in\mcal{F}$
and an infinite subset $Y \subseteq X$
such that $F \cup Y$ is $f$-free and $\Hcal$ is $C \oplus Y$-\hyper.
\end{lemma}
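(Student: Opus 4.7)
The approach generalizes Case~2 of Lemma~\ref{lem:fs1-left-trapped-preserves-2}: there, a single application of $\ts^1_2$ split $\omega$ by the location of $g(x)$, and one side gave an infinite $g$-free extension of the chosen $E_i$. For arbitrary arity $n$, the split must account for the whole cross-structure of $f$ between the finite set $E := \bigcup_{F' \in \mcal{F}} F'$ (assuming, as in all intended applications, that $\mcal{F}$ is finite, so $E$ is a finite set) and the eventual reservoir $Y \subseteq X$. The first step is to set up a good reservoir. Applying Lemma~\ref{lem:fs-left-trapped-preserves-smaller} to the pair $(E, X)$ yields an infinite $Y_0 \subseteq X$ with $\Hcal$ still $C \oplus Y_0$-\hyper\ and the cross-freeness property $f(\sigma, \tau) \notin Y_0 \setminus \tau$ for all $0 < s < n$, $\sigma \in [E]^s$, $\tau \in [Y_0]^{n-s}$. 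Using the hypothesis that $\fs^n$ preserves \propertyL\ on the $C \oplus Y_0$-computable restriction of $f$ to $[Y_0]^n$, thin further to an infinite $f$-free $Y_1 \subseteq Y_0$, then remove the finite set $E \cup f([E]^n)$ to obtain $Y_2$ disjoint from $E$ and from $f([E]^n)$, still with $\Hcal$ being $C \oplus Y_2$-\hyper.

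Next, for each $\sigma \in [E]^s$ with $0 \leq s < n$, consider the finite ``bad set'' of values attained cofinally by $f(\sigma,\cdot)$,
\[
B_\sigma \;=\; \{\, x \in E : (\forall b)(\exists \tau \in [Y_2 \cap (b,\infty)]^{n-s})\; f(\sigma,\tau) = x \,\}.
\]
Mirroring the proof of Lemma~\ref{lem:fs-cohesiveness-strong-preservation}, iterate the strong preservation of $\ts^{t}_{d_{t}+1}$ and $\fs^{t}$ for $0 < t < n$ to thin $Y_2$ to an infinite $Y_3$ with $\Hcal$ being $C \oplus Y_3$-\hyper\ and $|B_\sigma| \leq d_{n-s-1}$ uniformly for every $\sigma \in [E]^{<n}$ (with the convention $d_{-1} = 1$). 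Enumerate $B_\sigma = \{y^\sigma_0, y^\sigma_1, \dots\}$ and define left-trapped colorings $f_{s,i} : [\omega]^s \to \omega$, for $s < n$ and $i < d_{n-s-1}$, by $f_{s,i}(\sigma) = y^\sigma_i$ whenever $\sigma \subseteq E$ and $|B_\sigma| > i$, and $f_{s,i}(\sigma) = 0$ otherwise. The $n$-\fssufficient\ property applied to the family $(f_{s,i})$ delivers a single $F \in \mcal{F}$ that is $f_{s,i}$-free for every admissible $(s,i)$; equivalently, $B_\sigma \cap (F \setminus \sigma) = \emptyset$ for every $\sigma \in [F]^{<n}$.

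Finally, choose a threshold $b$ large enough that for every $\sigma \in [F]^{<n}$ and every $x \in F \setminus (\sigma \cup B_\sigma)$, one has $f(\sigma,\tau) \neq x$ for all $\tau \in [Y_3 \cap (b,\infty)]^{n-|\sigma|}$ (possible by the very definition of $B_\sigma$), and set $Y = Y_3 \cap (b,\infty)$. A case analysis on how an $n$-tuple $\eta \in [F \cup Y]^n$ decomposes as $\sigma \cup \tau$ with $\sigma \subseteq F$ and $\tau \subseteq Y$ then shows $F \cup Y$ is $f$-free: the all-$F$ case uses $f$-freeness of $F$ together with $Y \cap f([E]^n) = \emptyset$; the all-$Y$ case uses $f$-freeness of $Y_1 \supseteq Y$ together with $B_\emptyset \cap F = \emptyset$; the mixed case combines cross-freeness from the first step with $B_\sigma \cap (F \setminus \sigma) = \emptyset$ and the choice of $b$. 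The main obstacle is the second step: bounding $|B_\sigma|$ by $d_{n-s-1}$ uniformly in $\sigma$ while preserving \propertyL. This is the genuinely combinatorial heart of the argument, and it is precisely what the hypotheses ``$\ts^s_{d_s+1}$ strongly preserves \propertyL'' and ``$\fs^s$ strongly preserves \propertyL'' for $0 < s < n$ are designed to provide.
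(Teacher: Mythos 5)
Your proposal follows the same basic strategy as the paper's proof: bound the set of ``bad'' values of $f(\sigma,\cdot)$ by $d_{n-|\sigma|-1}$, invoke $n$-\fssufficiency to find $F \in \mcal{F}$ avoiding those values, and then check $f$-freeness of $F \cup Y$ by a three-way case split on how an $n$-tuple decomposes. Your reordering of the steps (establishing cross-freeness via Lemma~\ref{lem:fs-left-trapped-preserves-smaller} and $f$-freeness of the reservoir first, then bounding $B_\sigma$) is sound, since both properties pass to infinite subsets of the reservoir.

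There is, however, a gap in your bounding step. You claim to reach $|B_\sigma| \leq d_{n-s-1}$ by ``mirroring the proof of Lemma~\ref{lem:fs-cohesiveness-strong-preservation}, iterating the strong preservation of $\ts^t_{d_t+1}$ and $\fs^t$.'' But Lemma~\ref{lem:fs-cohesiveness-strong-preservation} only yields $|I_\sigma| \leq d_{n-|\sigma|}$: for an arbitrary coloring one must apply \emph{strong} preservation of $\ts^{n-|\sigma|}_{d_{n-|\sigma|}+1}$ to the derived coloring $\tau \mapsto f(\sigma,\tau)$. The one-index improvement to $d_{n-|\sigma|-1}$ — which is precisely what the range $i < d_{n-s-1}$ in the definition of $n$-\fssufficient\ demands — comes from the hypothesis $f \leq_T C$: the derived coloring is then $C$-computable, so ordinary \emph{preservation} (not strong preservation) suffices, and by Lemma~\ref{lem:ts-n-to-np1-strongly-double-immune}, strong preservation of $\ts^{n-s-1}_{d_{n-s-1}+1}$ gives preservation of $\ts^{n-s}_{d_{n-s-1}+1}$, which yields the tighter bound. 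The paper handles this with the clause ``but for preservation instead of strong preservation since $f \leq_T C$''; your argument needs the same observation, for without it you only get $|B_\sigma| \leq d_{n-s}$ and $n$-\fssufficiency no longer supplies a suitable $F$.

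Two minor observations. You assume $\mcal{F}$ is finite so that $E = \bigcup \mcal{F}$ is finite; the paper avoids this by thinning the reservoir with respect to arbitrary $\sigma \in [\omega]^{<n}$, choosing $F$, and only then fixing the threshold $b$ (which needs only $F$, not $E$, to be finite). And your explicit removal of $f([E]^n)$ from the reservoir is unnecessary: the case $|\sigma|=n$ is already handled by left-trappedness of $f$ combined with $Y > F$, as the paper does.
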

\begin{proof}
Using a version of Proposition \ref{thm:gen-coh-double-immmunity}
and imitating Lemma \ref{lem:fs-cohesiveness-strong-preservation} but for preservation instead of strong preservation
 (which is feasible since $f \leq_T C$),
there is an infinite set $X_0\subseteq X$
with $\Hcal$ being $X_0$-\hyper\ such that
for every $\sigma\in [\omega]^{<\omega}$ with $|\sigma|< n$,
there is a $I_\sigma$ with $|I_\sigma|\leq d_{n - |\sigma|-1}$
such that
for every $x\notin I_\sigma$,
$$
(\exists b)(\forall \tau\in [X_0\cap(b,\infty)]^{n-|\sigma|})
f(\sigma,\tau)\ne x.
$$
For each $s< n, i<d_{n-s-1}$,
consider the coloring $f_{s,i}:[\omega]^s\rightarrow \omega$
such that $f_{s,i}(\sigma)$ is the $i$th element of $I_\sigma$.

Since $\mcal{F}$ is $n$-\fssufficient,
there is an $F\in\mcal{F}$
such that $F$ is $f_{s,i}$-free for all $s< n,i<d_{n-s-1}$.
By choice of $X_0$, let $b\in\omega$ so that
for every $\sigma\in [F]^{<\omega}$ with $|\sigma|<n$,
\begin{align}\label{fstseq9}
(\forall \tau\in [X_0\cap (b,\infty)]^{n-|\sigma|})
f(\sigma,\tau)\notin F\setminus I_\sigma.
\end{align}
By preservation of \hyper\ of $\fs^n$,
let infinite $f$-free set $X_1\subseteq X_0\cap (b,\infty)$
so that $\Hcal$ is $X_1$-\hyper.
By Lemma \ref{lem:fs-left-trapped-preserves-smaller},
let infinite set $Y\subseteq X_1$
so that $\Hcal$ is $Y$-\hyper, $Y>F$
and for every $0<s<n$,
\begin{align}\label{fstseq8}
(\forall \sigma \in [F]^s)(\forall \tau \in [Y]^{n-s})f(\sigma, \tau) \not \in Y \setminus \tau.
\end{align}

We show that $F\cup Y$ is $f$-free.
To see this, let $\sigma\in [F]^{<\omega},\tau\in [Y]^{<\omega}$
with $|\sigma\cup \tau|=n$, we need to prove
$f(\sigma,\tau)\notin F\setminus\sigma$
and $f(\sigma,\tau)\notin Y\setminus \tau$.
To see $f(\sigma,\tau)\notin Y\setminus \tau$,
when $|\tau| = n$, the conclusion follows by $f$-free of $Y$;
when $0<|\tau|<n$, the conclusion follows from (\ref{fstseq8});
when $|\tau| = 0$, the conclusion follows by left trap of $f$.
To see $f(\sigma,\tau)\notin F\setminus\sigma$,
when $|\sigma| = n$, the conclusion follows from $f$-freeness of $F$.
When $|\sigma|<n$,
suppose $f(\sigma,\tau)\in F$ (otherwise we are done).
By (\ref{fstseq9}),
 $f(\sigma,\tau)=x\in F\cap I_\sigma$.
Since $x\in I_\sigma$, it means for  $s=n-|\sigma|$,
for some $i<d_{n-s-1}$, $f_{s,i}(\sigma)=x$.
In particular, $f_{s,i}(\sigma)\in F$.
But $F$ is $f_{s,i}$-free, so $x\notin \sigma$. Thus we are done.

\end{proof}

We are now ready to prove the missing theorem.

\begin{theorem}\label{thm:strong-preservation-fs-left-trapped}
For each~$n \geq 1$, if~$\fs^s$ strongly preserves \propertyL\
for each~   $0\leq s<n$  and $\fs^n$ preserves \propertyL,
then $\fs^n$ for left trapped functions strongly preserves \propertyL.
\end{theorem}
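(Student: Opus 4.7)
The plan is to mimic the structure of Theorem~\ref{thm:ts-n-strongly-double-immune-assuming}, adapting the Mathias forcing to the free-set setting. Fix a bifamily $\Hcal$ that is $C$-\hyper\ and a left trapped coloring $f:[\omega]^n\to\omega$; for notational simplicity assume $C=\emptyset$. As preprocessing, invoke Lemma~\ref{lem:fs-cohesiveness-strong-preservation} to replace $\omega$ with an infinite set $X_0$ on which $f$ has the color-avoidance property: for every $\sigma\in [X_0]^{<\omega}$ with $|\sigma|<n$ and every $x\in X_0\setminus\sigma$, there is $b$ with $f(\sigma,\tau)\ne x$ for all $\tau\in [X_0\cap(b,+\infty)]^{n-|\sigma|}$. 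This will play the same role as equation~(\ref{fstseq5}) played in the $\ts^n$ proof.

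Next I set up a forcing whose \emph{conditions} are Mathias pairs $(F,X)$ with $F\subseteq X_0$ finite, $X\subseteq X_0$ infinite, $\Hcal$ being $X$-\hyper, $F$ being $f$-free, and satisfying the structural property (analogous to (a) in Theorem~\ref{thm:ts-n-strongly-double-immune-assuming}) that for every $0<s<n$, every $\sigma\in [F]^s$, and every $\tau\in [F\cup X]^{n-s}$, $f(\sigma,\tau)\notin (F\cup X)\setminus(\sigma\cup\tau)$. A \emph{precondition} drops the structural property. Using Lemma~\ref{lem:fs-left-trapped-preserves-smaller}, every precondition extends to a condition, since that lemma produces, inside any $X$-\hyper\ reservoir, an infinite $Y\subseteq X$ with $\Hcal$ being $Y$-\hyper\ and $f(\sigma,\tau)\notin Y\setminus\tau$ for every $\sigma\in [F]^s$, $\tau\in [Y]^{n-s}$, $0<s<n$. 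An easy extension lemma, mirroring Lemma~\ref{lem:ts-n-strongly-double-immune-assuming-force-Q}, shows that every condition extends to one with $|E|>|F|$, using the color-avoidance property of $X_0$.

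The heart of the proof, and the main obstacle, is the force-a-requirement lemma for $\Rcal_e$: \emph{if $\Phi_e^G$ is total, then it meets $\Hcal$}. Following Lemma~\ref{lem:ts-n-strongly-double-immune-assuming-force-R}, I define a partial computable biarray by searching for finite sets $U_r>r$ such that for every pair of left trapped colorings $g,h:[\omega]^n\to\omega$ there is an $n$-\fssufficient\ collection $\mcal{F}$ of finite sets that are simultaneously $g$-free and $h$-free with $\Phi_e^E(r;1)\halts\subseteq U_r$ for every $E\in\mcal{F}$, and analogously for $V_{r,m}$ using a single coloring $g$. This splits into three cases: (1) $U_r$ undefined: use $\wkl$ preservation (Corollary~\ref{cor:wkl-double-immunity}) to pick $g,h$ in the resulting $\Pi^0_1$ class while preserving \propertyL, then unfold $n$-\fssufficient\ into a further $\Pi^0_1$ class and apply $\wkl$ preservation again to obtain witnessing colorings $f_{s,j}$, and then use strong preservation of $\fs^s$ for $s<n$ inductively to thin a reservoir $Y$ which is $f_{s,j}$-free, $g$-free, and $h$-free, making $(F,Y)$ extend $c$ and force $\Phi_e^G(r;1)\uparrow$; (2) $V_{r,m}$ undefined: a single $\wkl$ application yields $g$, the definition of $U_r$ (with $h=f$) provides an $n$-\fssufficient\ collection $\mcal{F}$ of $g$-free and $f$-free finite sets, and Lemma~\ref{lem:fs-with-enough-sets-free} applied to $g$ extracts a single $F\in\mcal{F}$ together with an infinite $Y$ such that $F\cup Y$ is $g$-free and $\Hcal$ is $Y$-\hyper, yielding a precondition extending $c$ that forces $\Phi_e^G(r,m;2)\uparrow$; (3) both defined everywhere: \propertyL\ of $\Hcal$ yields $(U_r,V_{r,m})\in\Hcal$ for some $r,m$, and the definition of $V_{r,m}$ with $g=f$ produces an $f$-free $E$ with $\Phi_e^E(r;1)\halts\subseteq U_r$ and $\Phi_e^E(r,m;2)\halts\subseteq V_{r,m}$, so $(F\cup E,X)$ is a precondition extending $c$ and forces $\Phi_e^G$ to meet $\Hcal$.

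The main technical obstacle is ensuring, in each case, that the preconditions produced can be converted into genuine conditions; this is exactly where the induction hypothesis that $\fs^s$ strongly preserves \propertyL\ for $0<s<n$ is applied through Lemma~\ref{lem:fs-left-trapped-preserves-smaller}. The secondary obstacle is the correct choice of sufficiency notion $n$-\fssufficient, which must be strong enough that Lemma~\ref{lem:fs-with-enough-sets-free} applies (so Case~2 extracts an infinite $f$-free extension) yet weak enough that the $\Pi^0_1$ class of Case~1 is nonempty. Once the three cases are handled, take a sufficiently generic filter $\Fcal=\{c_s:s\in\omega\}$ with $c_s=(F_s,X_s)$ and set $G=\bigcup_s F_s$; by the structural property each $F_s$ is $f$-free so $G$ is $f$-free, the first extension lemma makes $G$ infinite, and the force-a-requirement lemma gives that $\Hcal$ is $G$-\hyper, completing the proof of Theorem~\ref{thm:strong-preservation-fs-left-trapped}.
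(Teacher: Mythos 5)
Your proposal follows the paper's proof essentially line by line: preprocess with Lemma~\ref{lem:fs-cohesiveness-strong-preservation}, force with Mathias conditions carrying $f$-freeness invariants, and argue the core requirement via the three-case biarray (using $\wkl$ preservation, $n$-\fssufficient\ collections, and Lemma~\ref{lem:fs-with-enough-sets-free}). Two small technical points deserve tightening. First, your structural property ranges over $0<s<n$ only, but the case $s=0$ is also needed: when you pass from a condition $(F,X)$ to $(F\cup E,X)$ with $E\subseteq X$ $f$-free, you must rule out $f(\sigma)\in F$ for $\sigma\in[E]^n$, and this is exactly what the paper's invariant (b), namely $(\forall\sigma\in[F\cup X]^n)\,f(\sigma)\notin F\setminus\sigma$, provides; left-trappedness gives the $s=n$ direction ($f(\sigma)\notin X$ for $\sigma\subseteq F$ with $X>F$), but nothing blocks $f$ from mapping an $X$-tuple back into $F$ unless you state it as an invariant. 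So the structural property should be split, as the paper does, into a clause over $[F\cup X]^n$ landing outside $F\setminus\sigma$ and a clause over $\sigma\in[F]^s$, $\tau\in[X]^{n-s}$ landing outside $X\setminus\tau$. Second, in Case~1 you attribute the thinning of $Y$ entirely to strong preservation of $\fs^s$ for $s<n$, but $g$ and $h$ have arity $n$, so making $Y$ simultaneously $g$-free and $h$-free uses the hypothesis that $\fs^n$ preserves \propertyL\ (the $f_{s,j}$ of arity $s<n$ use the $\fs^s$ strong preservation hypotheses). Neither is a structural problem with your route — it is the paper's route — but both should be fixed before the argument is tight.
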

\begin{proof} 
Fix  a left trapped coloring~$f : [\omega]^n \to \omega$.
By Lemma~\ref{lem:fs-cohesiveness-strong-preservation},
we assume
that
for every $s<n$, every $\sigma\in [\omega]^s$,
\begin{align}\label{fstseq7}
(\forall x \in \omega \setminus \sigma)
  (\exists b)(\forall \tau \in [\omega\cap (b, +\infty)]^{n-s})
  f(\sigma, \tau) \neq x.
\end{align}

We will construct an infinite $f$-free set~$G$ such that $\Hcal$ is $G  $-\hyper.
Our forcing \emph{conditions} are Mathias conditions $(F, X)$ such that
\begin{itemize}
  \item[(a)] $\Hcal$ is $X  $-\hyper.
  \item[(b)] $(\forall \sigma \in [F \cup X]^n) f(\sigma) \not \in F \setminus \sigma$.
  \item[(c)] $(\forall \sigma \in [F]^s)(\forall \tau \in [X]^{n-s})
  f(\sigma, \tau) \not \in X \setminus \tau$ for each   $0<s<n$.
\end{itemize}
Clearly $(\emptyset,\omega)$ is a condition.
A \emph{precondition} $(F,X)$ is a Mathias condition satisfying (a) and 
where $F$ is
$f$-free.
\begin{lemma}\label{lem:fsn-extension}
\

\begin{enumerate}
\item Every precondition can be extended to a condition.
\item For every condition  $(F,X)$, every $f$-free set $Y\subseteq X$
with $Y>F$,
$F\cup Y$ is $f$-free.
\item  For every condition  $(F,X)$, every finite $f$-free set $E\subseteq X$ with $E>F$,
 $(F\cup E, X )$ is a precondition.
\end{enumerate}
\end{lemma}
\begin{proof}
For item (1):
Fix a precondition $(F,X)$.
 By (\ref{fstseq7}) and $f$-freeness of $F$, there is a $b\in\omega$
 so that  for every $\sigma\in [F]^{\leq n}$,
 $$
 (\forall \tau \in [\omega\cap (b, +\infty)]^{n-|\sigma|})
  f(\sigma, \tau) \notin F\setminus \sigma,
  $$
which verifies that $(F,X\cap (b,\infty))$ satisfies property (b).
By Lemma \ref{lem:fs-left-trapped-preserves-smaller},
there is
a reservoir-extension\footnote{A Mathias condition $(E,Y)$
reservoir-extends $(F,X)$ if it extends $(F,X)$ and $E=F$.} of  $(F,X\cap (b,\infty))$ satisfying property (c) while preserving property (a).
Thus we are done (property (b) is preserved by reservoir-extension).

For item (2):
Let $\sigma\in [F]^{<\omega}, \tau\in [Y]^{<\omega}$
with $|\sigma\cup \tau| = n$. We need to show
that $f(\sigma, \tau)\notin F\setminus \sigma$ and $
f(\sigma,\tau)\notin Y\setminus \tau$.
It follows from property (b)
of $(F,X)$ that $f(\sigma,\tau)\notin F\setminus\sigma$.
To see $f(\sigma,\tau)\notin Y\setminus \tau$,
the conclusion follows from property (c) of $(F,X)$
when $|\sigma|,|\tau|>0$;
the conclusion follows by left trap of $f$ when
$|\tau| = 0$; the conclusion follows from $f$-freeness of $Y$
when $|\tau| = n$.

Item (3) follows from item (2) directly.
\end{proof}

\begin{lemma}\label{lem:fs2-left-trapped-preserves-1}
For every condition $(F, X)$ there exists an extension
$(E, Y)$ such that $|E| > |F|$.
\end{lemma}
\begin{proof}
Pick any $x \in X$ so that $x>E$ and set $E = F \cup \{x\}$.
Since $\{x\}$ is $f$-free, so by Lemma \ref{lem:fsn-extension}, $(E,X)$
is a precondition.
\end{proof}

For every $e \in \omega$, we want to satisfy the requirement
\begin{align}\nonumber
\Rcal_e: \text{If $\Phi^{  G}_e$ is a total,
then $\Phi^{ G}_e$ meets $\Hcal$.}
\end{align}

\begin{lemma}\label{lem:fs2-left-trapped-preserves-2}
For every condition $c$ and every index $e$,
there is an extension $d$ of $c$ forcing $\Rcal_e$.
\end{lemma}
\begin{proof}
Fix $c = (F, X)$.
By Lemma \ref{lem:fsn-extension},
for notational convenience, assume $F=\emptyset$ and $X=\omega$.
 We define a partial computable biarray as follows.

\bigskip

\emph{Defining $U_n$}.
Given $r \in \omega$, search  computably for some
finite  set $U_r>r$ (if it exists) such that
for every pair of left trapped colorings $g, h : [\omega]^n \to \omega$,
there is a finite $n$-\fssufficient\ collection
$\mcal{E}$ of finite sets which is both $g$-free and $h$-free such that for every $E\in\mcal{E}$,
$$
\Phi_e^{E }(r;1) \downarrow \subseteq U_r.
$$

\emph{Defining $V_{r,m}$}. Given $r, m \in \omega$, search  computably for some
finite  set $V_{r,m}>m$ (if it exists) such that for every
left trapped coloring $g : [\omega]^n \to \omega$,
there is some $g$-free finite set $E$ such that
$$
\Phi_e^{E}(r;1) \downarrow \subseteq U_r
	\wedge \Phi_e^{E}(r, m;2) \downarrow \subseteq V_{r,m}.
$$
\smallskip

We now have multiple outcomes, depending on which $U_r$ and $V_{r,m}$ is found.
\begin{itemize}
	\item Case 1: $U_r$ is not found for some $r \in \omega$. By compactness, the
following $\Pi^{0}_1$ class
	$\Pcal$ of pairs of left trapped colorings $g, h : [\omega]^n \to \omega$
is nonempty:
	there is no   $n$-\fssufficient\ finite collection
$\mcal{E}$ of finite sets which are both $g$-free and $h$-free,
	 such that for every $E\in\mcal{E}$, we have
	$\Phi_e^{E}(r;1) \downarrow$.

	As $\wkl$ preserves \propertyL\
	(Corollary~\ref{cor:wkl-double-immunity}), there is a member $g, h $
of $\Pcal$ such that
	$\Hcal$ is $g \oplus h$-\hyper.
Unfolding the definition of $n$-\fssufficient\ and use compactness,
the following  $\Pi_1^{0,g\oplus h}$ class $\mcal{Q}$
of sequence $(f_{s,i}:[\omega]^s\rightarrow\omega)_{s<n,i<d_{n-s-1}}$ of left trapped colorings
is nonempty:
\begin{align}\label{fstseq10}
&\text{for every finite set $E$ which is $g$-free, $h$-free and }\\ \nonumber
&\text{$f_{s,i}$-free for each $s< n,i<d_{n-s-1}$,
$\Phi_e^{E}(r;1) \uparrow$.}
\end{align}
 As $\wkl$ preserves \propertyL\
	(Corollary~\ref{cor:wkl-double-immunity}),
there is a member $(f_{s,i}:s< n,i<d_{n-s-1})$ of $\mcal{Q}$
such that $\Hcal$ is $g\oplus h\oplus_{s< n,i<d_{n-s-1}}f_{s,i}$-\hyper.
	As $\fs^n$ preserves \propertyL, there is an infinite set $Y $ which is both
	$g$-free, $h$-free and $f_{s,i}$-free for each $s< n,i<d_{n-s-1}$ and such that $\Hcal$ is $Y $-\hyper.
	Clearly for every $G$ satisfying condition $(F,Y)$,
$G$ is $g$-free, $h$-free and $f_{s,i}$-free for each $s< n,i<d_{n-s-1}$,
so 	$\Phi_e^{  G}(r;1) \uparrow$.
i.e.,
	The condition $d = (F, Y)$ is an extension of $c$ forcing $\mcal{R}_e$.
	\bigskip

	\item Case 2: $U_r$ is found, but not $V_{r,m}$ for some $r, m \in \omega$. By compactness,
	the following $\Pi^{0}_1$ class $\Pcal$ of  left trapped colorings $g : [\omega]^n \to \omega$
is nonempty:
	 for every $g$-free set $E $,
	\begin{align}\label{fsts-fsnstongpreserve-defofg}
	 \Phi_e^{E}(r;1) \downarrow \subseteq U_r\Rightarrow
		 \Phi_e^{E}(r, m;2) \uparrow.
	\end{align}
  As $\wkl$ preserves \propertyL\
	(Corollary~\ref{cor:wkl-double-immunity}), there is a member $g$
of $\Pcal$ such that $\Hcal$ is $g  $-\hyper.
	By definition of $U_r$ (where we take letting $h = f$),
	there is a $n$-\fssufficient\ finite collection $\mcal{E}$
of finite sets
	which is both $g$-free and $f$-free and such that for each $E\in\mcal{E}$,
	$$
	\Phi_e^{E}(r;1) \downarrow \subseteq U_r.
	$$

	By Lemma~\ref{lem:fs-with-enough-sets-free}, there is an  infinite set $Y$
	and some $E\in\mcal{E}$ such that $\Hcal$ is $Y $-\hyper\ and $E \cup Y$
	is $g$-free. Consider the precondition $d=(E, Y)$.
It remains to prove that  $d$ forces $\Phi_e^{ G}(r, m;2) \uparrow$.
	Since $E \cup Y$ is $g$-free, so every $G$ satisfying $(E,Y)$ is $g$-free.
By definition of $g$ (namely (\ref{fsts-fsnstongpreserve-defofg})) and $\Phi_e^{E}(r;1)\downarrow\subseteq U_r $,
for every $G$ satisfying $(E,Y)$,
$\Phi_e^{ G}(r, m;2) \uparrow$.

	\bigskip

	\item Case 3: $U_r$ and $V_{r,m}$ are found for every $r,m \in \omega$.
	By  \propertyL\ of $\Hcal$, there is some $r, m \in \omega$
	such that $(U_r, V_{r,m}) \in \Hcal$.
	In particular, by definition of $V_{r,m}$ (where we take   $g = f$),
	there is some $f$-free finite set $E$ such that
	$$
	\Phi_e^{E}(r;1) \downarrow \subseteq U_r
		\wedge \Phi_e^{E}(r, m;2) \downarrow \subseteq V_{r,m}.
	$$
Consider the precondition $(E, X)$.
Clearly it forces $\Rcal_e$.
\end{itemize}
This completes the proof of Lemma~\ref{lem:fs2-left-trapped-preserves-2}.
\end{proof}

Let $\Fcal = \{c_0, c_1, \dots \}$ be a sufficiently generic filter for this notion of forcing,
where $c_s = (F_s, X_s)$, and let $G = \bigcup_s F_s$.
By property (b) of a condition, $G$ is $f$-free.
By Lemma~\ref{lem:fs2-left-trapped-preserves-1}, $G$ is infinite,
and by Lemma~\ref{lem:fs2-left-trapped-preserves-2},
$\Hcal$ is $C \oplus G$-\hyper.
This completes the proof of Theorem~\ref{thm:strong-preservation-fs-left-trapped}.
\end{proof}




\section{Erd\H{o}s-Moser theorem has no universal instance}
\label{uemsec1}

In this section,
we prove Theorem \ref{uemth0},
that $\msf{EM}$ does not have a universal instance.
To this end, we construct a pair of computable $\msf{EM}$ instances
$T_0,T_1$ such that for every computable $\msf{EM}$ instance $T$,
$T$ admits a solution that either computes no
solution of $T_0$ or computes no  solution of $T_1$.
Given an   $\msf{EM}$ instance
$T$ and two sets $A,B$,
we write $A\r_T B$ iff
for every $x\in A, y\in B$, $T(x,y)$
\footnote{It is helpful to picture $T$ as a directed graph.};
we say $T$ \emph{\diagonalagainst}
$(A,B)$
if: $A\r_{T} B$ and for all but finitely many
$x\in\omega$, $B\r_{T} x\r_{T} A$.
The point is, when $T$  \diagonalagainst\ $(A,B)$,
for any set $H$ that has nonempty intersection with
both $A,B$,  there is no solution to $T$ containing
$H$.

\begin{definition}
\
\begin{enumerate}
\item A \emph{4-\iatfs}
is a sequence of $4$-tuple
 of finite sets (of integers) $\langle E_n,E_{n,m,l}, F_{n,m},
 F_{n,m,l}: n,m,l\in\omega\rangle$
such that for every $n,m,l\in \omega$, $E_n>n,
E_{n,m,l}>  m, F_{n,m}>n$
and $F_{n,m,l}> l$.

\item A pair of $\msf{EM}$ instances
$(T_0,T_1) $ is $C$-\emph{\iatfshyp}
if for every $C$-computable
$4$-\iatfs\  $\langle E_n,E_{n,m,l}, F_{n,m},
 F_{n,m,l}: n,m,l\in\omega\rangle$,
there exist $n,m,l\in\omega$  such that
$T_0$ \diagonalagainst\ $(E_{n},E_{n,m,l} )$
and $T_1$ \diagonalagainst\ $(F_{n,m},F_{n,m,l})$.

\end{enumerate}
\end{definition}

For notational convenience,
in this section we regard each Turing machine $\Phi$ as computing a 4-\iatfs. We will therefore assume that whenever $\Phi(n;1)$ converges,
then it will output (the canonical index of) a finite  set $E_n > n$. Similarly for $\Phi(n,m, l;2)$, $\Phi(n,m;3)$ and $\Phi(n,m;4)$ with the appropriate lower bound.

By finite injury argument (as
Proposition \ref{thm:sts23-priority}), we have:
\begin{proposition}\label{uemprop1}
There exists a pair of computable stable \iatfshyp\ $\msf{EM}$
instance.
\end{proposition}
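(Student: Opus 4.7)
I plan to mirror the finite-injury priority construction of Proposition~\ref{thm:sts23-priority}, now simultaneously building a pair $(T_0, T_1)$ of stable computable tournaments. For each $e$, the requirement $\Rcal_e$ asserts: if $\Phi_e$ is a total 4-\iatfs, then there exist $n, m, l$ such that $T_0$ \diagonalagainst\ $(\Phi_e(n;1), \Phi_e(n,m,l;2))$ and $T_1$ \diagonalagainst\ $(\Phi_e(n,m;3), \Phi_e(n,m,l;4))$. At stage $s$ I extend both tournaments by one column: for each $x<s$ and each $j\in\{0,1\}$ I set $T_j(x,s)$ according to $x$'s current commitment to some $C_i(T_j)$, defaulting to $C_0$ (so $T_j(s,x)=1$) when there is no commitment. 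Then the highest-priority requirement requiring attention acts, all lower-priority requirements are reset (restraints and commitments released), and the stage ends.

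Where Proposition~\ref{thm:sts23-priority} had two levels of attention, $\Rcal_e$ now passes through three, tracking the nested parameters $n$, $m$, $l$. At first attention, find $n$ with $\Phi_{e,s}(n;1)\downarrow = E_n \subseteq \{e+1,\dots,s-1\}$ unrestrained by higher priority, restrain $E_n$, and tentatively commit $E_n \subseteq C_1(T_0)$. At second attention, using the same $n$, find $m$ with $\Phi_{e,s}(n,m;3)\downarrow = F_{n,m} \subseteq \{e+1,\dots,s-1\}$ unrestrained, restrain $F_{n,m}$, and tentatively commit $F_{n,m} \subseteq C_1(T_1)$. At third attention, using the same $n, m$, search for $l$ such that $\Phi_{e,s}(n,m,l;2)\downarrow = E_{n,m,l}$ and $\Phi_{e,s}(n,m,l;4)\downarrow = F_{n,m,l}$, both subsets of $\{e+1,\dots,s-1\}$ unrestrained by higher priority, and such that the currently-defined tournaments already satisfy $E_n \to_{T_0} E_{n,m,l}$ and $F_{n,m} \to_{T_1} F_{n,m,l}$; then re-commit $E_n \subseteq C_0(T_0)$ and $F_{n,m} \subseteq C_0(T_1)$, commit $E_{n,m,l} \subseteq C_1(T_0)$ and $F_{n,m,l} \subseteq C_1(T_1)$, restrain all four sets, and declare $\Rcal_e$ fully satisfied.

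The key design choice --- and the main obstacle to arrange --- is the role of the tentative $C_1$ commitments at first and second attention. Once $E_n$ is committed to $C_1(T_0)$ at some stage $s_1$, the construction ensures $T_0(a, s')=1$ for every $a \in E_n$ and every $s'>s_1$, so any element $b$ appearing after $s_1$ automatically satisfies $T_0(a,b)=1$. I therefore insist at third attention that $l \geq \max(s_1, s_2)$, which forces $E_{n,m,l}>l\geq s_1$ and $F_{n,m,l}>l\geq s_2$; such $l$ always exists because $\Phi_e$ is total and higher-priority restraints are finite. Thus $E_n \to_{T_0} E_{n,m,l}$ and $F_{n,m} \to_{T_1} F_{n,m,l}$ hold by construction rather than being lucky coincidences to check. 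The subsequent re-commitment of $E_n$ to $C_0(T_0)$ affects only $T_0(s',a)$ for $s'>s_3$, and the finitely many exceptional edges set between $s_1$ and $s_3$ leave $E_n \subseteq C_0(T_0)$ intact in the cofinite sense required for stability; the same remark applies symmetrically to $F_{n,m}$ in $T_1$. The standard finite-injury analysis then shows every $\Rcal_e$ acts only finitely often and is eventually fully satisfied, each element's commitment in each tournament eventually stabilizes (so both $T_0$ and $T_1$ are stable and computable), and the pair $(T_0, T_1)$ is therefore \iatfshyp.
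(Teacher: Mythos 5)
Your construction mirrors the paper's three-level finite-injury argument almost exactly, including the same tentative-then-final commitment scheme; your $C_0/C_1$ labels are swapped relative to the paper's convention, but that is harmless. There is, however, a genuine gap in the step you flag as the key design choice. You assert that insisting $l \geq \max(s_1, s_2)$ at third attention ``forces $E_{n,m,l} > l \geq s_1$.'' But by the definition of a $4$-\iatfs, the lower bound on $E_{n,m,l}$ is $m$, not $l$; only $F_{n,m,l}$ is required to be $> l$. Since your second attention places no lower bound on $m$, the construction can fix some $m < s_1$. Then for every $l$, the output $\Phi_e(n,m,l;2)$ may contain elements in the interval $(m, s_1)$, whose edges to $E_n$ were decided \emph{before} $E_n$ was committed to outward arrows (and, with your default assignment, those edges point the wrong way). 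In that case your explicit check ``$E_n \to_{T_0} E_{n,m,l}$'' can fail for every $l$, so the claim ``such $l$ always exists because $\Phi_e$ is total'' does not follow, and the requirement may wait forever.

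The fix is to constrain $m$ rather than $l$ on the $E$-side: when choosing $m$ at second attention, also demand $m \geq s_1$. Then $E_{n,m,l} > m \geq s_1$ for every $l$, so the arrows from $E_n$ into $E_{n,m,l}$ hold automatically once $E_n$ is committed to outward arrows, and the remaining search over $l$ succeeds by totality of $\Phi_e$ and finiteness of higher-priority restraints. This is precisely what the paper's explicit second-attention clause ``$f_0(x,y) = 0$ for every $x \in E_n$ and $y \in \{m+1,\dots,s-1\}$'' is arranging: for the second attention to fire, either $m$ is already past $s_1$, or the already-built tournament happens to have the right edges below $s_1$. Your condition $l \geq s_2$ at third attention is correct for the $F$-side, since $F_{n,m,l} > l$ really is part of the definition. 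With the $m \geq s_1$ correction, your argument matches the paper's.
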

\begin{proof}
We build the tournaments $T_0$ and $T_1$ by a finite injury priority argument. For simplicity, we see $T_0$ and $T_1$ as functions over $f_0, f_1 : [\omega]^2 \to 2$ by letting for every $x < y$ and $i < 2$, $T_i(x, y)$ hold iff $f_i(x, y) = 0$.
For every $e \in \omega$, we want to satisfy the following requirement:
\begin{align}
\Rcal_e: &\text{If $\Phi_e$ is total, then there is some $n, m, l \in \omega$
such that}\\ \nonumber
&\text{$T_0$ diagonalizes against $(\Phi_e(n;1), \Phi_e(n, m, l;2))$ and }\\ \nonumber
& \text{$T_1$ diagonalizes against $(\Phi_e(n,m;3), \Phi_e(n, m, l;4))$.}
\end{align}
The requirements are given the usual priority ordering $\Rcal_0 < \Rcal_1 < \dots$
Initially, the requirements are neither partially, nor fully satisfied.
\begin{itemize}
	\item[(i)] A requirement $\Rcal_e$ \emph{requires a first attention at stage~$s$}
	if it is not first satisfied and $\Phi_{e,s}(n;1) \downarrow = E_n$ for some set $E_n \subseteq \{e+1, \dots, s-1\}$
	such that no element in $E_n$ is restrained by a requirement of higher priority.
	If it receives attention, then it puts a restraint on $E_n$,
	commits the elements of $E_n$ to be in $C_0(f_0)$, and is declared \emph{first satisfied}.
	
	\item[(ii)] A requirement $\Rcal_e$ \emph{requires a second attention at stage~$s$}
	if it is not second satisfied and $\Phi_{e,s}(n;1) \downarrow = E_n$ and $\Phi_{e,s}(n,m;3) \downarrow = F_{n,m}$ for some sets $E_n, F_{n,m} \subseteq \{e+1, \dots, s-1\}$
	such that no element in $E_n \cup F_{n,m}$ is restrained by a requirement of higher priority
	and such that $f_0(x, y) = 0$ for every $x \in E_n$ and $y \in \{m+1, m+2, \dots, s-1\}$.
	If it receives attention, then it puts a restraint on $E_n \cup F_{n,m}$,
	commits the elements of $E_n$ to be in $C_0(f_0)$ and the elements of $F_{n,m}$ to be in $C_0(f_1)$. Then the requirement is declared \emph{second satisfied}.

	\item[(iii)] A requirement $\Rcal_e$ \emph{requires a third attention at stage~$s$}
	if it is not fully satisfied, and $\Phi_{e,s}(n;1) \downarrow = E_n$, $\Phi_{e,s}(n, m, l;2) \downarrow = E_{n,m,l}$, $\Phi_{e,s}(n,m;3) \downarrow = F_{n,m}$ and $\Phi_{e,s}(n,m,l;4) \downarrow = F_{n,m,l}$
	for some sets $E_n, E_{n,m,l}, F_{n,m}, F_{n,m,l} \subseteq \{e+1, \dots, s-1\}$ which are not restrained	by a requirement of higher priority, and such that $f_0(x, y) = 0$ for every $x \in E_n$ and $y \in \{m+1, m+2, \dots, s-1\}$, and $f_1(x, y) = 0$ for every $x \in F_{n,m}$ and $y \in \{l+1, l+2, \dots, s-1 \}$.
	If it receives attention,
	then it puts a restraint on $E_n \cup E_{n,m,l} \cup F_{n,m} \cup F_{n,m,l}$, commits the elements of $E_n$ to be in $C_1(f_0)$,
	the elements of $E_{n,m,l}$ to be in $C_0(f_0)$,
	the elements of $F_{n,m}$ to be in $C_1(f_1)$,
	the elements of $F_{n,m,l}$ to be in $C_0(f_1)$, and is declared \emph{fully satisfied}.
\end{itemize}
At stage~0, we let $f_0 = f_1 = \emptyset$.
Suppose that at stage $s$, we have defined $f_0(x, y)$ and $f_1(x, y)$ for every $x < y < s$.
For every $x < s$ and $i < 2$, if it is committed to be in some $C_j(f_i)$, set $f_i(x, s) = j$,
and otherwise set $f_i(x, s) = 0$.
Let $\Rcal_e$ be the requirement of highest priority which requires attention.
If $\Rcal_e$ requires a third attention,
then execute the third procedure. Otherwise, if it requires the second attention, then execute the second procedure, and in the last case, execute the first one.
In any case, reset all the requirements of lower priorities by setting them unsatisfied,
releasing all their restraints, and go to the next stage.
This completes the construction.
On easily sees by induction that each requirement acts finitely often, and is eventually
fully satisfied. This procedure also yields stable colorings, hence stable tournaments.
\end{proof}

Before proving our core argument which will be Theorem \ref{uemth1}, we prove a few preservation results. These results will be used to assume some good properties on our tournaments.

\begin{proposition}\label{uemprop0}
$\msf{COH}$ preserves \propiatfshyp. i.e.,
For every set $C$ and $C$-\iatfshyp\ $\msf{EM}$
instance pair $(T_0,T_1)$,  and every $C$-computable $\msf{COH}$
instance $\vec{R}$, there exists a solution $G$ of $\vec{R}$ such that
$(T_0,T_1)$ is $C\oplus G$-\iatfshyp.
\end{proposition}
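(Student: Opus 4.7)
The plan is to adapt the short proof of Corollary~\ref{cor:coh-preserves-double-immunity} to the \propiatfshyp\ setting. First, to the pair $(T_0,T_1)$ I would associate a class $\Bcal(T_0,T_1)\subseteq\omega^\omega$ whose members $X$ encode both a 4-\iatfs\ $\langle E_n,E_{n,m,l},F_{n,m},F_{n,m,l}:n,m,l\in\omega\rangle$ (with the size bounds $E_n>n$, $E_{n,m,l}>m$, $F_{n,m}>n$, $F_{n,m,l}>l$, which are open conditions on $X$) and, for every triple $(n,m,l)$, a certificate that no simultaneous diagonalization occurs at that triple. A certificate is a choice of $i\in\{0,1\}$ together with a witness that $T_i$ does not \diagonalagainst\ the corresponding pair; the witness is either a finite pair refuting the $\r_{T_i}$-clause, or an infinite increasing sequence of exceptional points falsifying the cofinitary clause. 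With these witnesses in place, membership in $\Bcal(T_0,T_1)$ is a $\Pi^0_1$ condition relative to $T_0\oplus T_1$.

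Second, I would prove an analogue of Lemma~\ref{lem:closed-set-double-immune}: $(T_0,T_1)$ is $C$-\iatfshyp\ if and only if $\Bcal(T_0,T_1)$ has no $(C\oplus T_0\oplus T_1)$-computable member. The implication from a computable member of $\Bcal(T_0,T_1)$ to a bad 4-\iatfs\ is immediate by projection. Conversely, from any $C$-computable bad 4-\iatfs\ I would build the missing witnesses $(C\oplus T_0\oplus T_1)$-computably by dovetailing, for each $(n,m,l)$, searches for finite violations of $\r_{T_0}$ or $\r_{T_1}$ with enumerations of exceptional points, committing to whichever branch succeeds first.

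Third, I would apply the cohesive cone-avoidance theorem of \cite[Corollary~2.9]{PateyCombinatorial}, relativized to the oracle $C\oplus T_0\oplus T_1$, to the $\Pi^0_1(C\oplus T_0\oplus T_1)$ class $\Bcal(T_0,T_1)$ and the $C$-computable (a fortiori $(C\oplus T_0\oplus T_1)$-computable) sequence $\vec R$. This yields an $\vec R$-cohesive set $G$ such that $\Bcal(T_0,T_1)$ has no $(C\oplus T_0\oplus T_1\oplus G)$-computable member. Applying the equivalence of the second step with $C$ replaced by $C\oplus G$ then gives that $(T_0,T_1)$ is $(C\oplus G)$-\iatfshyp, as required.

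The main technical obstacle is the second step, where the $\Sigma^0_2$-in-$(T_0\oplus T_1)$ character of ``\diagonalagainst'' must be absorbed into an honest $\Pi^0_1$ witnessing format while preserving the equivalence exactly. If this encoding proves too delicate, a robust fallback is to perform the argument by direct Mathias forcing with conditions $(F,X)$ preserving $(C\oplus X)$-\propiatfshyp\ of $(T_0,T_1)$, and requirements $\Rcal_e$ that rule out $\Phi_e^{C\oplus G}$ being a bad 4-\iatfs, handled by a three-case analysis in the style of Lemma~\ref{lem:mathias-double-immunity}, with cohesiveness ensured in the standard way by thinning reservoirs along $\vec R$.
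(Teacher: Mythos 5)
Your proposal follows essentially the same route as the paper's (one‑paragraph) argument: define a ``bad'' class $\Bcal$ in Baire space, observe it is closed and has no $C$-computable member, apply the cohesive cone‑avoidance theorem of~\cite[Corollary 2.9]{PateyCombinatorial}, and translate back. The paper simply asserts ``$\Bcal$ can be coded as a closed set''; you correctly notice that the raw class of bad 4-arrays is not closed --- the negation of ``$T_i$ \diagonalagainst\ $(A,B)$'' is the disjunction of a finitary clause and a $\Pi^0_2(T_i)$ clause (infinitely many exceptional points) --- so a certificate encoding is genuinely needed, and you identify that the resulting class is effectively closed only relative to $T_0\oplus T_1$. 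This is a real subtlety the paper glosses over (harmless in the application, where $T_0,T_1$ are computable, so $T_0\oplus T_1\le_T C$).

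There is, however, a concrete gap in your second step as you describe it. ``Committing to whichever branch succeeds first'' is unsound: once the bounded searches for a finite $\r_{T_0}$- or $\r_{T_1}$-violation have both failed, you must decide whether to enumerate $T_0$-exceptional or $T_1$-exceptional points, but you only know that \emph{one} of these sequences is infinite, not which. If you commit to, say, $T_0$ after seeing the first $T_0$-exceptional point and $T_0$ in fact has only finitely many, the certificate you are building will stall and your putative member of $\Bcal$ fails to be total. The fix is to use a single \emph{combined} certificate at each $(n,m,l)$: if no finite $\r_{T_i}$-violation exists, enumerate a strictly increasing sequence of points $x$ each tagged with some $i<2$ for which $x$ is $T_i$-exceptional; verifying this is a $\Pi^0_1(T_0\oplus T_1)$ condition, the search never stalls (since at least one $T_i$ has infinitely many exceptional points), and membership in $\Bcal$ still implies non-diagonalization at every triple because, by pigeonhole, infinitely many of the tagged points share a common $i$. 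With that adjustment the encoding step goes through, and so does the rest of your argument; the claimed biconditional should also be downgraded to the two implications you actually use (one direction holds with oracle $C$ alone, the other needs $T_0\oplus T_1$), again matching what the application requires. Your Mathias-forcing fallback is also viable and closer in spirit to the paper's Proposition~\ref{uemprop2}.
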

\begin{proof}
Let $\Bcal$ be the class of all $4$-arrays such that for every $m, n, l \in \omega$, either  $T_0$ does not diagonalize against  $(E_{n},E_{n,m,l} )$ or
 $T_1$ does not diagonalize against $(F_{n,m},F_{n,m,l})$. The class $\Bcal$ can be coded as a closed set in the Baire space $\omega^\omega$. By hypothesis, $\Bcal$ has no $C$-computable member.
 By~\cite[Corollary 2.9]{PateyCombinatorial},
there is an $\vec{R}$-cohesive set~$G$ such that $\Bcal$ has no $C \oplus G$-computable member.
By definition of $\Bcal$, $(T_0,T_1)$ is $C\oplus G$-\iatfshyp.
\end{proof}

We also need the preservation of \propiatfshyp\ of $\msf{WKL}$.

\begin{proposition}\label{uemprop2}
$\msf{WKL}$ preserves \propiatfshyp. i.e.,
For every set $C$ and $C$-\iatfshyp\ $\msf{EM}$
instance pair $(T_0,T_1)$,
 every nonempty $\Pi_1^{0,C}$ class
$\mcal{P}\subseteq 2^\omega$,
there is a $G\in\mcal{P}$ such that
$(T_0,T_1)$ is $C\oplus G$-\iatfshyp.

\end{proposition}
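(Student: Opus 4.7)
The plan is to adapt the proof of Corollary~\ref{cor:wkl-double-immunity} to the setting of \propiatfshyp. As in the bifamily case, the natural route is to invoke the hyperimmune-free basis theorem to produce a $G \in \mcal{P}$ of $C$-hyperimmune-free degree, and then show that \propiatfshyp\ is preserved under passage to $C \oplus G$ whenever $G$ has this property. So the proof splits cleanly into a ``hyperimmune-free implies preserving'' lemma and a one-line application of the basis theorem.

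First, I would establish the analogue of Lemma~\ref{lem:hif-double-immune}: if $(T_0,T_1)$ is $C$-\iatfshyp\ and $G$ has $C$-hyperimmune-free degree, then $(T_0,T_1)$ is $C \oplus G$-\iatfshyp. The key observation that makes this work is a monotonicity property of the diagonalization relation: if $T$ \diagonalagainst\ $(A,B)$ and $A' \subseteq A$, $B' \subseteq B$, then $T$ also \diagonalagainst\ $(A',B')$. This is immediate from the definition, since $A' \r_T B'$ follows from $A \r_T B$, and the cofinite cone condition $B \r_T x \r_T A$ only gets easier upon shrinking the sets on either side of $x$. Contrapositively, if $T$ does not diagonalize against $(A',B')$ and $A' \subseteq A$, $B' \subseteq B$, then $T$ does not diagonalize against $(A,B)$ either.

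Given a $C \oplus G$-computable 4-\iatfs\ $\langle E_n, E_{n,m,l}, F_{n,m}, F_{n,m,l} : n,m,l \in \omega \rangle$ witnessing that $(T_0,T_1)$ fails to be $C \oplus G$-\iatfshyp, I would define the $C \oplus G$-computable function $f(n,m,l) = \max(\max E_n, \max E_{n,m,l}, \max F_{n,m}, \max F_{n,m,l})$ and dominate it by a $C$-computable function $g$, using that $G$ has $C$-hyperimmune-free degree. I would then form a $C$-computable 4-\iatfs\ $\langle K_n, K_{n,m,l}, L_{n,m}, L_{n,m,l} \rangle$ by setting $K_n = \{n+1,\ldots,g(n,0,0)\}$, $K_{n,m,l} = \{m+1,\ldots,g(n,m,l)\}$, $L_{n,m} = \{n+1,\ldots,g(n,m,0)\}$, and $L_{n,m,l} = \{l+1,\ldots,g(n,m,l)\}$, so that each original set is contained in the corresponding new set and the required lower bounds $K_n > n$, $K_{n,m,l} > m$, $L_{n,m} > n$, $L_{n,m,l} > l$ are automatic. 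Applying $C$-\propiatfshyp\ of $(T_0,T_1)$ to this $C$-computable 4-\iatfs\ yields indices $n,m,l$ for which $T_0$ \diagonalagainst\ $(K_n, K_{n,m,l})$ and $T_1$ \diagonalagainst\ $(L_{n,m}, L_{n,m,l})$, and monotonicity then gives the same for the original sets, contradicting the choice of the witness array.

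To conclude the proposition, I would apply the hyperimmune-free basis theorem of Jockusch and Soare~\cite{Jockusch197201} (relativized to $C$) to the $C$-computable tree witnessing $\mcal{P}$, obtaining a $G \in \mcal{P}$ of $C$-hyperimmune-free degree; the lemma then gives that $(T_0,T_1)$ is $C \oplus G$-\iatfshyp. No step looks like a serious obstacle here; the only place that requires genuine (but very light) verification is the monotonicity of the diagonalization relation, after which the rest is the same domination-by-a-$C$-computable-function trick already used in Lemma~\ref{lem:hif-double-immune}.
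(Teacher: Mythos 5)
Your argument is correct but it takes a genuinely different route from the paper's. The paper proves Proposition~\ref{uemprop2} directly by forcing with nonempty $\Pi^0_1$ subclasses of $\mcal{P}$, defining a partial computable 4-\iatfs\ $(U_n, U_{n,m,l}, V_{n,m}, V_{n,m,l})$ via compactness over all $X \in \mcal{Q}$, and doing a four-way case analysis depending on which components are found; when all are found, the \propiatfshyp\ hypothesis yields indices $n,m,l$ where the requirement is already forced. You instead mirror Corollary~\ref{cor:wkl-double-immunity}: invoke the relativized hyperimmune-free basis theorem and establish the analogue of Lemma~\ref{lem:hif-double-immune}. Your observation that the relation ``$T$ \diagonalagainst\ $(A,B)$'' is downward monotone in both arguments (if $T$ \diagonalagainst\ $(A,B)$ and $A' \subseteq A$, $B' \subseteq B$, then $T$ \diagonalagainst\ $(A',B')$ --- immediate since shrinking the sets only weakens both $A' \r_T B'$ and the cofinite cone condition) is exactly the structural feature that plays the role of downward closure of the bifamily, and the domination step goes through with the same bookkeeping as in Lemma~\ref{lem:hif-double-immune} (padding the extra coordinates with $0$). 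Your approach is shorter and arguably cleaner; the paper's forcing argument has the advantage of being structurally parallel to its other preservation lemmas (Lemma~\ref{lem:mathias-double-immunity}, Lemma~\ref{uemlemmain}) and self-contained, but it is not forced by the mathematics. One minor point worth making explicit in a written-up version: the relativized basis theorem gives $G \in \mcal{P}$ such that every $C \oplus G$-computable function is dominated by a $C$-computable function, which is exactly the property you use.
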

\begin{proof}
Assume $C = \emptyset$, and fix $(T_0, T_1)$ and a $\Pi^0_1$ class $\mcal{P} \subseteq 2^\omega$.
We will prove our proposition with a forcing with $\Pi^0_1$ non-empty subclasses of $\mcal{P}$. We satisfy the requirement:
\begin{align}\nonumber
\mcal{R}_e: & \text{If }\Phi_e^G\text{ is total, then for
some $n,m,l \in \omega$, }\\ \nonumber
& T_0\text{ \diagonalagainst\ $(\Phi_e^G(n;1),\Phi_e^G(n,m,l;2))$ and}\\ \nonumber
&\text{$T_1$ \diagonalagainst\ $(\Phi_e^G(n,m;3),\Phi_e^G(n,m,l;4))$.}
\end{align}
The core of the argument is the following lemma :

\begin{lemma}\label{lem:uemprop2-preserve}
For every index $e$, every condition $c$
 admits an extension forcing $\mcal{R}_e$.
\end{lemma}
\begin{proof}
Let $\mcal{Q} \subseteq \mcal{P}$ be a condition.
We  define 
 a partial computable 4-\iatfs\ as follows.
\bigskip

\emph{Defining $U_n$}.
Given $n \in \omega$, search computably for some
finite set $U_n>n$ such that
for every $X\in \mcal{Q}$,
$$\Phi_e^X(n;1)\downarrow\subseteq U_n.$$

\emph{Defining $V_{n,m}$}.
 Given $n,m\in\omega$, search computably for some
 finite set $  V_{n,m}>n$
such that
for every $X\in \mcal{Q}$,
$$
\Phi_{e}^{X}(n, m;2) \downarrow \subseteq V_{n,m}.
$$

 \emph{Defining $U_{n,m,l},V_{n,m,l}$}.
  Given $n,m,l\in\omega$, search computably for some
 finite sets $U_{n,m,l}>m, V_{n,m,l}>l$
 such that
for every $X\in \mcal{Q}$,
\begin{align}\nonumber
& \Phi_e^X(n,m,l;3)\downarrow\subseteq U_{n,m,l}\wedge
\Phi_e^X(n,m,l;4)\downarrow\subseteq V_{n,m,l}.
\end{align}

\smallskip

We now have multiple outcomes, depending on which $U_n$ and $V_{n,m}$ is found.
\begin{itemize}
	\item Case 1: $U_n$ is not found for some $n \in \omega$.
	Then by compactness,
the  $\Pi_1^0$ class $\mcal{W}$ of $X\in \mcal{Q}$ so that
$\Phi_e^X(n;1)\uparrow$ is nonempty.
Thus $\mcal{W}$ is the desired extension.

	\item Case 2: $V_{n,m}$ is not found for some $n, m \in \omega$.
Then by compactness,
the  $\Pi_1^0$ class $\mcal{W}$ of $X\in \mcal{Q}$ so that
$\Phi_{e}^{X}(n, m;2) \uparrow$ is nonempty.
Thus $\mcal{W}$ is the desired extension.

	\item Case 3: $U_n$ or $V_{n,m}$ is not found for some $n,m \in \omega$.
	Then by compactness,
the  $\Pi_1^0$ class $\mcal{W}$ of $X\in \mcal{Q}$ so that
\begin{align}\nonumber
&\Phi_e^X(n,m,l;3)\uparrow\vee
\Phi_e^X(n,m,l;4)\uparrow
\end{align}
is nonempty.
Thus $\mcal{W}$ is the desired extension.

\item Case 4: $U_n, V_{n,m},U_{n,m,l},V_{n,m,l}$ are found for every $n,m \in \omega$.
Since $(T_0,T_1)$ is \iatfshyp, there exist
$n, m,l$ such that $T_0$,$T_1$ \diagonalagainst\
$(U_{n},U_{n,m,l})$ and
$(V_{n, m},V_{n, m,l})$ respectively.
Thus $\mcal{Q}$ already forces $\mathcal{R}_e$.
\end{itemize}

\end{proof}

Let $\Fcal = \{\mcal{P}_0, \mcal{P}_1, \dots \}$ be a sufficiently generic filter for this notion of forcing,
where $c_s = (F_s, X_s)$, and let $G \in \bigcap_s \mcal{P}_s$. In particular, $G \in \mcal{P}$ and by Lemma~\ref{lem:uemprop2-preserve},
$(T_0,T_1)$ is $C\oplus G$-\iatfshyp.
This completes the proof of Theorem~\ref{uemprop2}.
\end{proof}

The rest of this section will be dedicated to the proof of Theorem \ref{uemth0}, from which Theorem \ref{uemth1} follows.

\begin{theorem}\label{uemth0}
If a pair of $\msf{EM}$ instance
$(T_0,T_1)$ is $C$-\iatfshyp, then for every
$C$-computable $\msf{EM}$ instance $T$, there exists
a  solution $G$ to $T$ such that
either $C\oplus G$ does not compute a solution to $T_0$,
or $C\oplus G$ does not compute a solution to $T_1$.
\end{theorem}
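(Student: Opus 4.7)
The plan is to construct a $T$-solution $G$ by Erd\H{o}s-Moser forcing while meeting, for every pair of Turing functionals $(\Phi_e,\Phi_{e'})$, the disjunctive requirement
$$
\Rcal_{e,e'} : \Phi_e^{C\oplus G}\text{ is not an infinite $T_0$-transitive set, or }\Phi_{e'}^{C\oplus G}\text{ is not an infinite $T_1$-transitive set.}
$$
For notational convenience I would assume $C=\emptyset$. Conditions will be pairs $(F,X)$ where $F$ is a finite $T$-transitive set, $X$ is an infinite set above $F$ compatible with $F$ in the usual EM sense (every finite $T$-transitive $H\subseteq X$ makes $F\cup H$ $T$-transitive), and $(T_0,T_1)$ is $X$-\iatfshyp. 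Using Proposition~\ref{uemprop0} and Proposition~\ref{uemprop2} to preserve \propiatfshyp\ along $\coh$ and $\wkl$, we may assume the initial reservoir is chosen so that $T$ is stable on it and enjoys the required compatibility with finite $T$-transitive subsets.

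The core of the argument is an extension lemma: every condition $c=(F,X)$ and every pair $(e,e')$ admit an extension $d$ forcing $\Rcal_{e,e'}$. Following the Seetapun-style scheme of Lemmas~\ref{lem:ts14-strong-preserve-double-immunity-force-Q}, \ref{lem:fs1-left-trapped-preserves-2} and~\ref{lem:fs2-left-trapped-preserves-2}, I would define a partial computable 4-\iatfs: for each $n,m,l\in\omega$, the sets $E_n, F_{n,m}, E_{n,m,l}$ and $F_{n,m,l}$ enumerate the first- and second-output ranges of $\Phi_e$ and $\Phi_{e'}$ over finite $T$-transitive witness extensions of $F$ inside $X$. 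As in the earlier proofs, the search must be guarded by a ``sufficiency'' condition (an EM-analogue of \tssufficient\ and \fssufficient) guaranteeing that the witnessing extensions remain embeddable after arbitrary further restriction of the reservoir by an auxiliary tournament.

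The verification then splits into cases according to which of the four searches first fails. Every such failure gives by compactness a nonempty $\Pi^0_1$ class of auxiliary tournaments defeating every ``sufficient'' family of witnesses, and applying $\wkl$-preservation (Proposition~\ref{uemprop2}) inside the class produces an extension of $c$ along which either $\Phi_e^G$ or $\Phi_{e'}^G$ is forced to be non-total, so $\Rcal_{e,e'}$ is forced. If on the contrary all four searches terminate for every $n,m,l$, the resulting 4-\iatfs\ is computable, so by $C$-\iatfshyp\ of $(T_0,T_1)$ there exist $n,m,l$ with $T_0$ \diagonalagainst\ $(E_n,E_{n,m,l})$ and $T_1$ \diagonalagainst\ $(F_{n,m},F_{n,m,l})$. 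Amalgamating the corresponding witness extensions into $F$ yields an extension of $c$ forcing $\Phi_e^G$ to enumerate elements of both $E_n$ and $E_{n,m,l}$, and $\Phi_{e'}^G$ to enumerate elements of both $F_{n,m}$ and $F_{n,m,l}$; by the observation following the definition of \diagonalagainst, no infinite $T_0$- (resp.\ $T_1$-)transitive set can contain both such pieces, so $\Rcal_{e,e'}$ is forced.

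The principal obstacle will be formulating the correct EM-variant of the ``sufficiency'' condition for the search. It must satisfy two competing demands: (i) the witnessing finite $T$-transitive extensions drawn from $X$ must be amalgamable with the current $F$ while preserving both the compatibility of the reservoir and the $X$-\propiatfshyp\ of $(T_0,T_1)$ on the shrunk reservoir, and (ii) a failure of the search must genuinely produce a $\Pi^0_1$ class to which Proposition~\ref{uemprop2} applies. EM-forcing is more delicate than Mathias forcing because the reservoir must stay structurally compatible with the finite part in order for transitivity to propagate upon extension, so the analogues of Lemmas~\ref{lem:tssuff} and~\ref{lem:fs-with-enough-sets-free} must be redeveloped in this tournament setting, which is where I expect the bulk of the technical effort to lie.
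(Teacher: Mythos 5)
Your high-level scheme agrees with the paper's: EM forcing with conditions $(F,X)$ carrying compatibility and the $X$-\propiatfshyp\ invariant, a partial computable 4-\iatfs\ built from Turing-functional outputs over $T$-transitive finite extensions, a four-way case split on the first failing search, Proposition~\ref{uemprop2} applied to the $\Pi^0_1$ classes produced by compactness in the failing cases, and $\propiatfshyp$ in the all-converge case. You also correctly flag where the real work is. But the two places you leave open are precisely where your sketch would fail as written.

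The crucial gap is the sufficiency notion. Your proposal is to quantify over auxiliary \emph{tournaments}, by analogy with the auxiliary colorings of the $\ts$/$\fs$ preservation proofs. That is the wrong analogue here. In $\ts$/$\fs$, later reservoir-shrinkings come from solutions to auxiliary $\ts$/$\fs$ instances, so witnesses must resist auxiliary colorings. In the $\emo$ argument, the later shrinkings come from (i) picking an infinite piece of the partition produced by the $\Pi^0_1$/$\wkl$ step, and (ii) intersecting with the stable partition $A_0\sqcup A_1$ to respect the arrow constraint $F\cap A_0\rightarrow_T X\rightarrow_T F\cap A_1$ of a condition. Accordingly, the paper quantifies the definition of $U_n$ over all $8$-partitions of $\omega$, of $U_{n,m},V_{n,m}$ over all $4$-partitions, and of $U_{n,m,l},V_{n,m,l}$ over all $2$-partitions. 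The exponents $8,4,2$ are not arbitrary: the compactness argument at level $k$ yields a $2^{3-k}$-partition, and crossing it with $A_0\sqcup A_1$ gives a partition lying within the range quantified over at the \emph{previous} level, which is what guarantees that one of the earlier witnesses survives. The delicate subcase of Case~3 (both relevant pieces $X_j,X_{\hat j}$ finite, hence $j=\hat j$ and $k\ne\hat k$) then hinges on the direction of the $T$-arrows from property (b) of conditions to keep the extension compatible with $X_0\sqcup X_1$. An ``auxiliary tournament'' sufficiency does not produce the partitions the construction actually needs to thread through Cases~2 and~3, so this is not merely a notational choice that can be filled in later; the argument would not close.

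A second, smaller, error: in the all-converge case you propose to amalgamate witnesses so that $\Phi_{e}^{G}$ hits both $E_n,E_{n,m,l}$ \emph{and} $\Phi_{e'}^{G}$ hits both $F_{n,m},F_{n,m,l}$. The definition of $U_{n,m,l},V_{n,m,l}$ is, and must be, disjunctive: it guarantees a single $T$-transitive $F$ compatible with the ambient $2$-partition satisfying one of the two clauses, because two witnesses living in different parts of $A_0\sqcup A_1$ cannot in general be amalgamated into a single $T$-transitive finite set. Demanding the conjunction would break the compactness argument of Case~3. The disjunction suffices, of course, since $\mcal{R}_{e_0,e_1}$ is itself a disjunction; the fix is cosmetic once one sees it, but as stated this step is wrong.
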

\begin{proof}
For notational convenience,  we assume
$C=\emptyset$.
Fix $(T_0,T_1)$ and $T$ as in Theorem \ref{uemth0}.
By Proposition \ref{uemprop0}, we may assume that
$T$ is stable (for every $x\in\omega$, either $x\rightarrow_T y$ for all but finitely many $y$,
or $y\r_T x$ for all but finitely many $y$).

In the rest of the proof, every Turing functional $\Phi^G$ is computing a set of integers,
namely $\{n: \Phi^G(n)\downarrow =1\}$;
so it makes sense to write $\Phi^G\cap A$.
Let $A_0\sqcup A_1$ be a 2-partition (of $\omega$) such that
 $x\in A_0$ if and only if $x\r_T y$ for all but finitely many
$y\in \omega$ (which is well defined since $T$ is stable).
This automatically ensures that $x\in A_1$ iff $y\r_T x$ for all but finitely many
$y\in\omega$.
For a set $Z\subseteq \omega$, a $2$-partition
$X_0\sqcup X_1$ of $\omega$, we say $Z$ is \emph{\compatible} with $X_0\sqcup X_1$ if
$Z\cap X_0\r_T Z\cap X_1$.
Note that if $Z\subseteq X_i$ for some $i$, then $Z$ is \compatible\ with $X_0\sqcup X_1$.

A \emph{condition} is a Mathias condition $(F,X)$ with the following properties:
\begin{itemize}
\item [(a)] $F$ is  $T$-transitive and \compatible\ with $A_0\sqcup A_1$;
\item [(b)]
$F\cap A_0\r_T X\r_T F\cap A_1$;

\item [(c)] $(T_0,T_1)$ is $X$-\iatfshyp.

\end{itemize}
A \emph{precondition} is a Mathias condition satisfying (a)(c).

\begin{lemma}\label{uemlem3}
Let  $(F,X)$ be a condition and $Y\subseteq X$.
\begin{enumerate}

\item If $Y$ is  $T$-transitive, then
 $F\cup Y$ is  $T$-transitive.

 \item If $Y$ is    \compatible\ with $A_0\sqcup A_1$, then
 $F\cup Y$ is  \compatible\ with $A_0\sqcup A_1$.

 \item For every precondition $(E,Y)$, there is a $b\in\omega$
 so that $(E,Y\cap (b,\infty))$ is a condition.
\end{enumerate}
\end{lemma}
\begin{proof}
Item (1)(2)
follows from property (b) of $(F,X)$.
Item (3) follows from definition of $A_0,A_1$.
\end{proof}

\begin{lemma}\label{uemlem4}
For every condition $(F, X)$ there exists an extension
$(E, Y)$ such that $|E| > |F|$.
\end{lemma}
\begin{proof}
Let $x\in X\setminus F$.
Clearly $\{x\}$ is $T$-transitive and \compatible\ with $A_0\sqcup A_1$.
By Lemma \ref{uemlem3} item (1)(2),
$(F\cup\{x\}, X)$ is a precondition.
\end{proof}

For $e\in\omega$, $i\in 2$,
let $\mcal{R}_e^i$ denote the
requirement:
$$\Phi_e^G\text{ is not a solution to }T_i.$$
We will construct a solution $G$ of $T$ satisfying:
$$\mcal{R}_{e_0,e_1}: \mcal{R}_{e_0}^0\vee \mcal{R}_{e_1}^1$$
for all $e_0,e_1\in\omega$.
A condition $(F,X)$ \emph{forces}
$\mcal{R}_{e_0,e_1}$ if: for every
  solution  $G$ of $T$ satisfying $(F,X)$,
$G$ satisfies $\mcal{R}_{e_0,e_1}$.
Note that the definition of forcing
is slightly different from that in section \ref{sect:fs-ts-sem},
that we restrict to $T$-transitive set.
This restriction cannot be applied in section \ref{sect:fs-ts-sem} since
 there we deal with arbitrary instance instead of computable instance.

\begin{lemma}\label{uemlemmain}
For every condition $c$
and indices $e_0,e_1\in\omega$,
there is an extension of $c$ forcing $\mcal{R}_{e_0,e_1}$.
\end{lemma}
\begin{proof}
Fix $c=(D,X)$.
By Lemma \ref{uemlem3},
for notational convenience, we assume $D=\emptyset$ and $X=\omega$.
We firstly describe a process to partially compute a
$4$-\iatfs. Then we show that if the computation
diverges, we obtain an extension $d\leq c $ forcing $\mcal{R}_{e_0,e_1}$
in the $\Pi_1^0$ way: one of $\Phi_{e_i}^{G}$ is not infinite; and if the computation converges,
we obtain $d\leq c$ forcing $\mcal{R}_{e_0,e_1}$ in a
$\Sigma_1^0$ way: for some $n,m,l$ so that
$T_0,T_1$ \diagonalagainst\ $(V_n,V_{n,m}), (U_{n,m},U_{n,m,l})$ respectively,
either $\Phi_{e_0}^G\cap U_n\ne\emptyset\wedge\Phi_{e_0}^G\cap U_{n,m}\ne\emptyset$;
or $\Phi_{e_1}^G\cap V_{n,m}\ne\emptyset\wedge\Phi_{e_1}^G\cap V_{n,m,l}\ne\emptyset$
(which means for some $i<2$,  $\Phi_{e_i}^G$ is not a solution to $T_i$).

\bigskip

\emph{Defining $U_n$}.
Given $n \in \omega$, search computably for some
finite set $U_n>n$ such that
for every $8$-partition
  $X_0\sqcup\cdots\sqcup X_7 = \omega$, there exists an $i< 8$,
  a finite $T$-transitive set $E\subseteq X_i$
  such that  $\Phi_{e_0}^E\cap U_n\ne\emptyset $.
\smallskip

 \emph{Defining $U_{n,m},V_{n,m}$}.
 Given $n,m\in\omega$, search computably for some
 finite set $U_{n,m}>m, V_{n,m}>n$
such that
for every $4$-partition $X_0\sqcup\cdots\sqcup X_3$:
\begin{itemize}
\item [(a)]  either there exists an $i< 4$, a finite $T$-transitive set $E\subseteq X_i$
   such that   $\Phi_{e_0}^{E}\cap U_n \neq \emptyset$ and $\Phi_{e_0}^E\cap U_{n,m}\ne\emptyset$;
  \item [(b)] or there exist $j\ne i<4$ and two finite $T$-transitive
  sets $F\subseteq X_j, E\subseteq X_i$ such that
  $\Phi_{e_0}^E\cap U_n \neq \emptyset$ and $\Phi_{e_1}^{F}\cap V_{n,m}\ne\emptyset$.
  \end{itemize}

 \emph{Defining $U_{n,m,l},V_{n,m,l}$}.
  Given $n,m\in\omega$, search computably for some
 finite sets $U_{n,m,l}>m, V_{n,m,l}>l$
 such that for every $2$-partition $X_0\sqcup X_1$, there exists a   finite $T$-transitive set $E $ compatible with $X_0\sqcup X_1$
  such that:
  \begin{itemize}

  \item [(p)] either   $\Phi_{e_0}^E\cap U_n \neq \emptyset$ and $\Phi_{e_0}^E\cap U_{n,m,l}\ne\emptyset$;

  \item [(q)] or
  $\Phi_{e_1}^E\cap V_{n,m} \neq \emptyset$ and $\Phi_{e_1}^E\cap V_{n,m,l}\ne\emptyset$.
  \end{itemize}
\bigskip

\noindent\textbf{Case 1}: $U_n$ is not found for $n$.

This is straightforward.
By compactness, the following $\Pi_1^0$ class $\mcal{P}$
of $8$-partitions $X_0\sqcup\cdots\sqcup X_7$
is nonempty: for every $i< 8$, every $T$-transitive finite set $E\subseteq
X_i$, $\Phi_{e_0}^E\cap (n,\infty)=\emptyset$.
As $\msf{WKL}$ preserves \propiatfshyp\ (Proposition \ref{uemprop2}),
there exists a  member
$X_0\sqcup\cdots\sqcup X_7$ of $\mcal{P}$
so that $(T_0,T_1)$ is $\oplus_{i<8} X_i$-\iatfshyp.
Fix any $i < 8$ such that $X_i$ is infinite.
Then $(D,X_i) $ is an extension of $c$ forcing
$\mcal{R}_{e_0,e_1}$.

\ \\

\noindent\textbf{Case 2}: $U_n$ is found but not
$(U_{n,m},V_{n,m})$ for some $n,m$.

By compactness, the following $\Pi_1^0$ class $\mcal{P}$
of $4$-partitions $X_0\sqcup\cdots\sqcup X_3$
is nonempty:
\begin{itemize}
\item[(a)]  for every $i< 4$, every finite $T$-transitive set $E\subseteq X_i$,
   we have   $\Phi_{e_0}^{E}\cap U_n=\emptyset\vee\Phi_{e_0}^E\cap (  m,\infty)=\emptyset$; and
  \item[(b)] for every $j\ne i<4$ and every two finite $T$-transitive
  sets $F\subseteq X_j, E\subseteq X_i$, we have
  $\Phi_{e_0}^E\cap U_n=\emptyset\vee\Phi_{e_1}^{F}\cap (n,\infty)=\emptyset$.
  \end{itemize}

As $\msf{WKL}$ preserves \propiatfshyp\ (Proposition \ref{uemprop2}),
there exists a  member $X_0\sqcup\cdots\sqcup X_3$ of $\mcal{P}$
so that
$(T_0,T_1)$ is $\oplus_{i<4} X_i$-\iatfshyp.
Consider the
 $8$-partition $(X_i\cap A_k: i<4,k<2)$. By definition of
$U_n$, there exist $i<4,k<2$ and a finite $T$-transitive set
 $E\subseteq X_i\cap A_k$,
 such that
$\Phi_{e_0}^E\cap U_n\ne\emptyset$.

\noindent\textbf{Subcase 1}: $X_i$ is infinite.

Since $E\subseteq A_k$,
$E$ is compatible with $A_0\sqcup A_1$.
So
$d=(E,X_i )$ is a  precondition extending $c$.
Note that by property (a) of $X_0\sqcup\cdots\sqcup X_3$,
for every $T$-transitive set $G$ satisfying $d$ (so $G\subseteq X_i$), we have
$\Phi_{e_0}^G\cap (  m,\infty)=\emptyset$ (since $\Phi_{e_0}^G\cap U_n\ne\emptyset$).
Thus, $d$ forces
$\mcal{R}_{e_0,e_1}$.

\noindent\textbf{Subcase 2}:
$X_i$ is finite.

Then there exists a $j\ne i $ such that $X_j$ is infinite.
Note that  by property (b) of   $X_0\sqcup\cdots\sqcup X_3$,
for every $T$-transitive set $G\subseteq X_j$,
we have
$\Phi_{e_1}^G\cap (n,\infty)=\emptyset$.
Thus the condition $(D,X_j)$ extends $c$ and forces
$\mcal{R}_{e_0,e_1}$.

 \ \\

\noindent\textbf{Case 3}: $U_n,U_{n,m},V_{n,m}$ are found but not
$(U_{n,m,l},V_{n,m,l})$
for some $n,m,l\in\omega$.

By compactness, the following $\Pi_1^0$ class $\mcal{P}$
of $2$-partitions $X_0\sqcup  X_1$
is nonempty: for every   finite $T$-transitive finite set $E $ compatible with $X_0\sqcup X_1$,
  we have
 \begin{itemize}

  \item[(p)]  $\Phi_{e_0}^E\cap U_n=\emptyset\vee \Phi_{e_0}^E\cap(  m,\infty)=\emptyset$; and

  \item[(q)]
  $\Phi_{e_1}^E\cap V_{n,m}=\emptyset\vee \Phi_{e_1}^E\cap (  l,\infty)=\emptyset$.
  \end{itemize}

By Proposition \ref{uemprop2},
there exists a member  $X_0\sqcup X_1$ of $\mcal{P}$
so that $(T_0,T_1)$ is $\oplus_{j<2} X_j$-\iatfshyp.
Consider the $4$-partition $(X_j\cap A_k:j,k<2)$.
By property (p) of $X_0\sqcup X_1$,
for every $j,k<2$ and $E\subseteq X_j\cap A_k$
(so $E$ is \compatible\ with $X_0\sqcup X_1$),
\begin{align}\label{uemeq0}
\Phi_{e_0}^E\cap U_n=\emptyset\vee \Phi_{e_0}^E\cap(  m,\infty)=\emptyset.
\end{align}
Combine with
the  definition of $U_{n,m},V_{n,m}$ (where we take the 4-partition to be $(X_j\cap A_k:j,k<2)$
and note that by (\ref{uemeq0}), property (a)   fails, so property (b) occurs),
we have: there are $(j,k)\ne (\h j,\h k)$ and finite $T$-transitive sets
$E\subseteq X_j\cap A_k, F\subseteq X_{\h j}\cap A_{\h k} $
such that
$$\Phi_{e_0}^E\cap U_n,
\Phi_{e_1}^{F}\cap V_{n,m}\ne\emptyset.$$

\noindent\textbf{Subcase 1}:
Either $X_j$ or $X_{\h j}$ is infinite.

Suppose $X_j$ is infinite.
Consider the precondition $d=(E,X_j)$ extending $c$.
Note that for every $T$-transitive set $G $ satisfying $d$,
$ G$ is \compatible\ with $X_0\sqcup X_1$ (since $G\subseteq X_j$).
 Thus by property (p) of $X_0\sqcup X_1$   and $\Phi_{e_0}^E\cap U_n\ne\emptyset$,
for every  $T$-transitive set $G $  satisfying $d$,
we have $\Phi_{e_0}^G\cap (m,\infty)=\emptyset$.
Thus $d$
forces
$\mcal{R}_{e_0,e_1}$. Suppose now $X_{\h j}$ is infinite. Then, taking $d = (F, X_{\h j})$, a similar argument shows that by property (q) of $X_0\sqcup X_1$, for every  $T$-transitive set $G $  satisfying $d$,
we have $\Phi_{e_1}^G\cap (l,\infty)=\emptyset$.
Thus $d$
forces
$\mcal{R}_{e_0,e_1}$. Thus we are done in this subcase.
\bigskip

\noindent\textbf{Subcase 2}:
Both  $X_j$, $X_{\h j}$ are finite.

This implies $j=\h j$ since $X_0 \sqcup X_1 = \omega$, and then $k\ne \h k$ since $(j,k)\ne (\h j,\h k)$.
Let $b$ be sufficiently large to witness the limits of the elements of $F$ and $E$ with respect to the tournament.

If $j=\h j=0$ (so $X_1$ is infinite) and $\h k=0, k=1$, consider the
condition $d= (F,X_1\setminus[0,b])$.
Since $F\subseteq A_{\h k}=A_0$,
 we have $F\r_T X_1\setminus [0,b]$.
 Therefore for every   $G\subseteq X_1\setminus [0,b]$,
 $F\cup G$ is \compatible\ with $X_0\sqcup X_1$ (since $F\subseteq X_0$).
 Thus for every $T$-transitive set $G$ satisfying $d$,
 by property (q) of $X_0\sqcup X_1$
 and since $\Phi_{e_1}^F\cap V_{n,m}\ne\emptyset$, we have $\Phi_{e_1}^{G}\cap (l,\infty)=\emptyset$.
Thus $d$ forces $\mcal{R}_{e_1}^1$.

If $j=\h j=1$ (so $X_0$ is infinite) and $\h k=0, k=1$, consider the condition $d=(E,X_0 \setminus[0,b])$.
Since $E\subseteq A_k=A_1 $,
we have $X_0\setminus[0,b]\r_T E$.
Therefore for every set $G\subseteq X_0\setminus[0,b]$,
$E\cup G$ is \compatible\ with $X_0\sqcup X_1$ (since $E\subseteq X_1$). Thus for every  $T$-transitive set $G$
satisfying $d$,
by property (p) of $X_0\sqcup X_1$ and $\Phi_{e_0}^E\cap U_n\ne\emptyset$, we have
$\Phi_{e_0}^G\cap (m,\infty)=\emptyset$.
Thus $d$ forces $\mcal{R}_{e_0}^0$.

If $j=\h j=0$ (so $X_1$ is infinite) and $\h k=1, k=0$,
then the condition $(E, X_1 \setminus [0, b])$ forces $\mcal{R}_{e_0}^0$ by a similar argument using property (p) of $X_0\sqcup X_1$.

If $j=\h j=1$ (so $X_0$ is infinite) and $\h k=1, k=0$,
then the condition $(F, X_0 \setminus [0, b])$ forces $\mcal{R}_{e_1}^1$ by a similar argument using property (q) of $X_0\sqcup X_1$.

 \ \\

\noindent
\textbf{Case 4}:  $U_n,U_{n,m},V_{n,m},U_{n,m,l},V_{n,m,l}$
  is found for all $n,m,l$.

Since $(T_0,T_1)$ is \iatfshyp, there exist
$n, m,l$ such that $T_0$ and $T_1$ diagonalize against\
$(U_{n},U_{n,m,l})$ and
$(V_{n, m},V_{n, m,l})$, respectively.
By definition of
$U_{n,m,l},V_{n,m,l}$ (where we take $X_0\sqcup X_1$ to be $A_0\sqcup A_1$),
there exists a finite $T$-transitive set $F $   compatible  with
$A_0\sqcup A_1$  such that
\begin{itemize}
    \item[(p)]  either
$\Phi_{e_0}^{F}\cap U_{n} \neq \emptyset$ and $\Phi_{e_0}^{F}\cap U_{n,m,l}\ne\emptyset$;
    \item[(q)] or
$\Phi_{e_1}^{F}\cap V_{n,m} \neq \emptyset$ and $\Phi_{e_1}^{F}\cap V_{n,m,l}\ne\emptyset$.
\end{itemize}
Let $d=(F,X)$. We claim that $d$ forces $\mcal{R}_{e_0,e_1}$ by forcing $\mcal{R}^0_{e_0}$ on case (p) and $\mcal{R}^1_{e_1}$ on case (q).
Let $G$ be a set satisfying $d$.
In the case (p), $\Phi_{e_0}^G$ has a non-empty intersection with both $U_{n}$ and $U_{n,m,l}$, in which case $\Phi_{e_0}^G$ is not a solution to $T_0$ (recall the definition of diagonalizes against); and in the case (q), $\Phi_{e_1}^G$ has non-empty intersection with both $V_{n,m}$ and $V_{n,m,l}$, in which case $\Phi_{e_1}^G$ is not a solution to $T_1$.
This completes the proof of the lemma.
\end{proof}

 Let $\Fcal = \{c_0, c_1, \dots \}$ be a sufficiently generic filter for this notion of forcing,
where $c_s = (F_s, X_s)$, and let $G = \bigcup_s F_s$.
By property (a) of a condition, $G$ is $T$-transitive.
By Lemma~\ref{uemlem4}, $G$ is infinite,
and by Lemma~\ref{uemlemmain},
$G$ satisfies $\mcal{R}_{e_0,e_1}$ for all $e_0,e_1$.
 By pairing
argument, this means either $G$ does not compute
a solution to $T_0$, or $G $ does not compute a solution to $T_1$.
This completes the proof of Theorem~\ref{uemth0}.
\end{proof} 

\section*{Acknowledgements}

The second author was partially supported by grant ANR ``ACTC'' \#ANR-19-CE48-0012-01.

\bibliographystyle{plain}
\bibliography{bibliography}

\appendix


\end{document}